\documentclass[final]{siamart220329} 
\usepackage{amsfonts,amsopn,amsmath,amssymb}
\usepackage{mathrsfs}
\usepackage{color}
\usepackage{booktabs}
\usepackage{nicefrac}
\usepackage{multirow}
\usepackage{enumerate}

\usepackage{todonotes}


\title{An extension of the approximate component mode synthesis method to the heterogeneous Helmholtz equation
}
\author{Elena Giammatteo%
\thanks{Department of Applied Mathematics, University of Twente,
P.O. Box 217, 7500 AE Enschede, The Netherlands.
\email{e.giammatteo@utwente.nl}}
\and Alexander Heinlein%
\thanks{Delft Institute of Applied Mathematics, Delft University of Technology, Mekelweg 4, 2628 CD Delft, The Netherlands. \email{a.heinlein@tudelft.nl}}
\and Matthias Schlottbom%
\thanks{Department of Applied Mathematics, University of Twente,
P.O. Box 217, 7500 AE Enschede, The Netherlands.
\email{m.schlottbom@utwente.nl}}
}

\ifpdf
\hypersetup{
  pdftitle={ACMS method for the Helmholtz equation},
  pdfauthor={E. Giammatteo, A. Heinlein, M. Schlottbom}
}
\fi

\headers{ACMS method for the Helmholtz equation}{E. Giammatteo, A. Heinlein, M. Schlottbom}

\def\NN{\mathbb{N}}
\def\RR{\mathbb{R}}


\def\CC{\mathbb{C}}


\def\V{\mathcal{V}}

\def\I{\mathcal{I}}




\newcommand{\norm}[1]{\ensuremath{\Vert #1 \Vert}}
\newcommand{\Oj}{\Omega_j}

\newcommand{\sesH}{\mathcal{C}} 
\newcommand{\sesI}{\mathcal{A}} 
\newcommand{\sesP}{\mathcal{B}} 

\newcommand{\uB}{u_{B}}
\newcommand{\uG}{u_{\Gamma}}
\newcommand{\uh}{u_S}
\newcommand{\uhB}{u_{B,S}}
\newcommand{\uhG}{u_{\Gamma,S}}

\newcommand{\vG}{v_{\Gamma,S}}
\newcommand{\vB}{v_B}

\newcommand{\VG}{V_{\Gamma}}
\newcommand{\VB}{V_B}
\newcommand{\Vh}{V_{S}}
\newcommand{\VhG}{V_{\Gamma,S_\Gamma}}
\newcommand{\VhB}{V_{B,S_B}}

\newcommand{\E}{\mathcal{E}}
\newcommand{\Ve}{\mathcal{V}}

\newcommand{\ExtG}{E^\Gamma} 

\newcommand{\ExtLoc}[1]{E^{#1}}
\newcommand{\ExtE}{E^e}

\newcommand{\HDO}{H^1_D(\Omega)}
\newcommand{\HOj}{H^1_0(\Oj)}
\newcommand{\Hj}{H^1(\Oj)}
\newcommand{\HO}{H^1(\Omega)}

\newcommand{\HtrG}{H^{\myfrac{1}{2}}(\Gamma)}
\newcommand{\HtrE}{H^{\myfrac{1}{2}}(e)}
\newcommand{\HtrELM}{H_{00}^{\myfrac{1}{2}}(e)}
\newcommand{\He}{H^1(e)}
\newcommand{\HOe}{H^1_0(e)}
\newcommand{\HtrBOj}{H^{\myfrac{1}{2}}(\partial\Oj)}

\newcommand{\myfrac}[2]{\nicefrac{#1}{#2}}

\newsiamremark{assumption}{Assumption} 
\crefname{assumption}{Assumption}{Assumption} 
\newsiamremark{remark}{Remark} 
\crefname{remark}{Remark}{Remark} 

\begin{document}

\maketitle

\begin{abstract}
In this work we propose and analyze an extension of the approximate component mode synthesis (ACMS) method to the heterogeneous Helmholtz equation. The ACMS method has originally been introduced by Hetmaniuk and Lehoucq as a multiscale method to solve elliptic partial differential equations. The ACMS method uses a domain decomposition to separate the numerical approximation by splitting the variational problem into two independent parts: local Helmholtz problems and a global interface problem. While the former are naturally local and decoupled such that they can be easily solved in parallel, the latter requires the construction of suitable local basis functions relying on local eigenmodes and suitable extensions. We carry out a full error analysis of this approach focusing on the case where the domain decomposition is kept fixed, but the number of eigenfunctions is increased. 
The theoretical results in this work are supported by numerical experiments verifying algebraic convergence for the method. In certain, practically relevant cases, even exponential convergence for the local Helmholtz problems can be achieved without oversampling.
\end{abstract}
\begin{keywords}
	Multiscale method, approximate component mode synthesis (ACMS), Helmholtz equation, heterogeneous media, high-frequency
\end{keywords}
\begin{AMS}
65N12, 
65N30, 
65N35, 
65N55  
\end{AMS}

\section{Introduction} \label{sec:1}

In this paper, we propose and analyze a multiscale method for the heterogeneous Helmholtz equation
\begin{alignat}{4}
-{\rm div}(a \nabla u) - \kappa^2 u &= f &\quad &\text{in } \Omega, \label{eq:model_strong}\\
 a \partial_n u - i \omega \beta u &=g  &\quad  &\text{on } \Gamma_R, \label{eq:model_strong_bcR}\\
 u&=0 &\quad &\text{on } \Gamma_D,
 \label{eq:model_strong_bcD}
\end{alignat}
in a plane region $\Omega\subset \RR^2$. The boundary of $\Omega$ is decomposed into sets $\Gamma_D$ and $\Gamma_R$ modeling, respectively, Dirichlet boundary conditions and impedance boundary conditions described by the function $g$. The real-valued function $\beta $ is related to transmission and reflection of the wave described by $u$.
Material properties of the background medium occupying $\Omega$ are described by the coefficient functions $a$ and $c$. 
Denoting the positive angular frequency as $\omega$, the wavenumber is given by $\kappa=\myfrac{\omega}{c}$, and any interior sources are modeled by the function $f$.
Heterogeneous Helmholtz equations have many applications, such as modeling the propagation of light in photonic crystals, where $u$ is related to either the transverse-electric or  the transverse-magnetic field~\cite{Joannopoulos2008MoldingLight}, or seismic imaging~\cite{Bourgeois1991,Wang2011}.

\subsection{Literature overview}

Numerical simulations of the Helmholtz equation are challenging due to the highly oscillatory behavior of the solution, in particular, in the case of a high wavenumber $\kappa$. The differential operator in the equation is indefinite, which may lead to stability issues for standard discretization approaches such as the classical finite element method (FEM). 
In order to resolve the oscillatory behavior of the solution, a wavenumber-dependent mesh size of $\mathcal{O}(1/\kappa)$ is required. 
Moreover, the FEM solution suffers from the pollution error in general, i.e., the ratio of the errors of the Galerkin solution and the best approximation grows with the wavenumber $\kappa$~\cite[Theorem 2.6]{BabuskaIhlenburgSauter1995}, \cite{BabuskaSauter1997}. 
To obtain accurate FEM solutions,
a much smaller mesh size $h$ satisfying $\kappa^3h^2=\mathcal{O}(1)$ has to be employed~\cite{deraemaeker_dispersion_1999}, see also \cite[Theorem~4.5]{GrahamSauter2019} where a similar condition is derived.

In order to overcome the pollution effect, higher-order methods can be employed: in particular, for the $hp$-FEM, quasi-optimality has been proved in \cite{MelenkSauter2010, MelenkSauter2011} under the conditions that the polynomial degree $p$ is at least $O(\log \kappa)$ and that $\myfrac{\kappa h}{p}$ is sufficiently small.
In \cite{ChaumontFreletNicaise2020, LafontaineSpence2022}, similar results have been achieved for the heterogeneous Helmholtz equation with piecewise smooth coefficients; however, to the authors' knowledge, no theory for rough coefficients is available yet. 

Multiscale discretization approaches can be very efficient for problems with heterogeneous or non-smooth coefficients, especially if the variations are on a much smaller scale than the size of the computational domain. 
While for classical higher-order finite element methods, fine-scale coefficient variations have to be resolved by mesh elements, multiscale methods are typically defined on a coarser grid, and fine-scale variations are handled via adapted basis functions; those basis functions are typically computed locally using a fine-scale mesh. Hence, systems resulting from multiscale methods are often smaller by orders of magnitude.

In recent years, many multiscale discretization methods have been developed and applied to the heterogeneous Helmholtz equation; for a review on numerical homogenization techniques, also addressing their application to Helmholtz problems, see~\cite{altmann_numerical_2021}.
The localized orthogonal decomposition (LOD) method, introduced in~\cite{henning2014localized,malqvist_localization_2014}, has successfully been applied to heterogeneous Helmholtz problems with high wavenumbers~\cite{BrownGallistlPeterseim2017, Peterseim2017, PeterseimVerfuerth2020}.
The LOD relies on a partition of the domain with coarse elements of size $H$, and basis functions that are computed using local oversampling domains of size $\ell H$. The approximation error is then bounded by $\mathcal{O}(H+\gamma^\ell)$ for some $0<\gamma<1$ assuming the resolution condition $H\kappa \ll 1$ and $\log(\kappa)/\ell$ bounded \cite[Theorem~5.5]{Peterseim2017}.
More recently, even super-exponential convergence of the localization error could be shown for a variant of the LOD method in~\cite{Freese2021}.

The heterogeneous multiscale method (HMM)~\cite{abdulle_heterogeneous_2012,e_heterogeneous_2003}, which relies on local periodicity of the media, has been extended to Helmholtz problems in~\cite{OhlbergVerfurth2018}. The authors show quasi-optimality in terms of a wave-number dependent quasi-optimality constant.

Other multiscale discretization approaches are multiscale FEM (MsFEM)~\cite{BabuskaCalozOsborn1994,BabuskaOsborn1983,HouWu1997};
see also, e.g.,~\cite{EfendievHou2009} for an overview on MsFEM. 
Nearly exponential error decay has been achieved in~\cite{MaAlberScheichl2023}, using similar ideas as in the multiscale generalized finite element method (Ms-GFEM), first described in~\cite{Efendiev2013}. 
In a recent work, Chen, Hou and Wang~\cite{ChenHouWang2023} extended ideas from elliptic equations~\cite{Hetmaniuk2010,ChenHouWang2021} to the heterogeneous Helmholtz equation. The method in \cite{ChenHouWang2023} decomposes the solution into a microscale part, which is bounded by $\mathcal{O}(H)$, and macroscale part, which is bounded by $\mathcal{O}(H \gamma^m)$ for some $\gamma<1$ and $m$ denoting the number of local basis functions associated to the edges of a coarse partition with mesh size $H$. These local basis functions are computed by local oversampling.

In domain decomposition methods, multiscale discretizations are also used as coarse spaces to construct preconditioners which are robust with respect to heterogeneous coefficients; see, e.g., \cite{aarnes_multiscale_2002,gander_analysis_2015,heinlein_adaptive_2019,heinlein_fully_2022} for applications to scalar elliptic problems, or~\cite{buck_multiscale_2013} for linear elasticity problems.
For the application to heterogeneous Helmholtz problems, see, for instance,~\cite{bootland_geneo_2021,conen_coarse_2014,gong_domain_2020}.

Let us also refer to~\cite{Wang2011} for an optimized parallel implementation of a direct solver based on local Schur complements on a hierarchy of domain decompositions reducing the number of low-rank compression operations compared with other structured direct solvers; see~\cite{Liu2022} for a recent work in the context of elliptic equations.
Finally, multigrid methods for the heterogeneous Helmholtz problem have been analyzed; see, e.g., \cite{Erlangga_2006}.

In this work, we consider an extension of the approximate component mode synthesis (ACMS) method to Helmholtz problems. The ACMS method has been introduced by Hetmaniuk and Lehoucq in~\cite{Hetmaniuk2010} as a multiscale discretization for scalar elliptic problems with heterogeneous coefficient functions. 
It is based on the early component mode synthesis (CMS) method~\cite{CraigBampton1968,Hurty1960}, which uses a decomposition of the global approximation space into independent local subspaces and an interface space. The basis functions are defined as eigenmodes of corresponding generalized eigenvalue problems. The global support of the interface basis functions leads to high computational cost for their construction as well as a dense matrix structure of the interface problem. Hence, the practicability and potential for parallelization of CMS method are rather limited.
Therefore, the ACMS method was introduced in order to improve the CMS approach: instead of global interface modes, functions with local support are employed.
The basis functions incorporate heterogeneities of the model problem; notably, besides edge modes, this framework uses vertex basis functions of MsFEM~\cite{EfendievHou2009,HouWu1997,buck_multiscale_2013} type. 
Since the ACMS discretization uses problem-specific shape functions with local support, it can therefore be considered a special finite element method (SFEM)~\cite{BabuskaCalozOsborn1994}, and it also fits into the framework of the generalized finite element method (GFEM)~\cite{babuska_generalized_2004}.
In contrast to other multiscale approaches, such as those previously mentioned~\cite{ChenHouWang2023,Freese2021,MaAlberScheichl2023}, the ACMS method of \cite{Hetmaniuk2010} is based on a non-overlapping domain decomposition and does not use local oversampling.

The ACMS method has been further investigated in~\cite{Hetmaniuk2014}, where a priori error bounds and an a posteriori error indicator for the method have been derived, and in~\cite{Heinlein2015}, where a parallel implementation in PETSc based on the finite element tearing and interconnecting dual primal (FETI-DP) domain decomposition method as the parallel solver is presented. 
More recently, a robust spectral coarse space for Schwarz domain decomposition methods based on the ACMS discretization has been introduced in~\cite{heinlein_multiscale_2018}; further related works on CMS and ACMS methods include~\cite{Bourquin1990,Bourquin1992,MadureiraSarkis2018}. 
Moreover, to the authors' best knowledge, preliminary but unpublished work on the extension of the ACMS method to wave problems has been carried out by Hetmaniuk and Johnson in the early 2010s. 

As a multiscale discretization, the ACMS method seems to be well suited for approximating heterogeneous Helmholtz problems. 
As we will discuss in this work, since the eigenmodes of the Helmholtz and the Laplacian operator are the same if the wavenumber is constant, similar ACMS basis functions as in the Laplace case are suitable for the Helmholtz equation as well.

\subsection{Our contribution and outline}

In this paper, we extend the ACMS method to heterogeneous Helmholtz problems. In doing so, we slightly modify the construction of the edge modes and the extension operator. 
These modifications are required because the weak formulation of the considered Helmholtz problem does not lead to a symmetric positive definite formulation, contrary to the elliptic case, for which the ACMS method has been developed originally.
At the same time, we adapt the method to Robin boundary conditions \cref{eq:model_strong_bcR}.
Moreover, edge basis functions are constructed by solving one-dimensional eigenvalue problems that are fully localized to single edges, while the edge eigenvalue problems in~\cite{Hetmaniuk2010,Hetmaniuk2014,Heinlein2015} 
solve eigenproblems that involve extensions to adjacent subdomains.

In contrast to \cite{ChenHouWang2023,Hetmaniuk2014}, where the number of subdomains in the domain decomposition is increased to obtain convergence, we perform a complementary error analysis focusing on fixing the domain decomposition and increasing the number of eigenmodes for tackling the local heterogeneities. 
This approach is motivated by applications from wave propagation in (quasi)-periodic media \cite{Joannopoulos2008MoldingLight}, where our construction yields an accurate local model for the propagation of light within one unit cell, i.e., one subdomain in the domain decomposition. 
While the error analysis for the local sub-problems, for which we obtain similar a posteriori error bounds as in \cite{Hetmaniuk2014} is rather simple, the error analysis for the interface problem is slightly more sophisticated.
The analysis is based on the abstract framework of \cite{GrahamSauter2019}, and relies in particular on the smallness of the so-called adjoint approximability constant; see \cref{sec:theory} and, in particular, \cref{eq:adjoint_approx_constant} below.
In contrast to \cite{GrahamSauter2019}, where $H^2(\Omega)$-regularity of the solution of the adjoint problem is required, we require $H^2$-regularity of the solution of the adjoint problem only in the vicinity of the interface of the domain decomposition.
We show that, upon using sufficiently many (edge) eigenmodes, the adjoint approximability constant can be made arbitrarily small, and we provide an explicit scaling relation for the required number of edge modes in terms of frequency $\omega$.
\\
Finally, we present a detailed numerical study of the method using a flexible implementation that, unlike in the numerical results for the ACMS method shown in previous works, also allows for unstructured meshes and domain decompositions. 

The paper is organized as follows: we introduce the necessary preliminaries for defining and analyzing the ACMS method for Helmholtz problems in~\cref{sec:preliminaries}. In particular, we show the variational formulation of the Helmholtz equation, recall the well-posedness results of \cite{GrahamSauter2019}, and illustrate the underlying domain decomposition and function spaces of the ACMS discretization. 
Then, in \cref{sec:acms}, we present our new variant of the ACMS method for Helmholtz problems along with some theoretical properties. The error analysis of the ACMS method is carried out in \cref{sec:theory}.
Finally, we describe our numerical results in~\cref{sec:results} and conclude with some final remarks.

\section{Preliminaries} \label{sec:preliminaries}
In the following, we introduce the functional analytic setting and recall well-posedness results for the heterogeneous Helmholtz equation. Furthermore, we introduce the domain decomposition used in the ACMS method.

\subsection{Variational formulation of the Helmholtz equation}
We denote by $L^2(\Omega)$ the Lebesgue space of square-integrable functions $v:\Omega\to\CC$ with inner product
$$
(u,v)_\Omega = \int_\Omega u\,\overline{v}\,dx \,,
$$
and by $H^1(\Omega)$ the usual Sobolev space of functions in $L^2(\Omega)$ with square-integrable weak derivatives.
Corresponding notation is used for other measurable sets besides $\Omega$. Furthermore, we indicate by $\HDO\subset H^1(\Omega)$ functions with vanishing trace on $\Gamma_D\subset\partial\Omega$. 
Let us introduce the sesquilinear forms $\sesI,\sesH: \HDO \times \HDO \to \CC$ defined by
\begin{align} 
\label{eq:def_A}
\sesI(u,v)&=\int_{\Omega} a \nabla u \cdot \nabla \overline{v} \ dx \,,\\
\label{eq:def_C}
\sesH(u,v)&=\sesI(u,v) - (\kappa^2 u,v)_\Omega-i (\omega \beta u,v)_{\Gamma_R} \,.
\end{align}
Then the weak form of the Helmholtz problem~\cref{eq:model_strong,eq:model_strong_bcR,eq:model_strong_bcD} and its adjoint are
\begin{align}
    &\text{Find } u \in \HDO: \quad \sesH(u,v) = F(v), \quad \text{for all } v \in \HDO, 
    \label{eq:HelmholtzVariational}
                \\
    &\text{Find } z \in \HDO: \quad \sesH(v,z) = G(v), \quad \text{for all } v \in \HDO.
    \label{eq:HelmholtzVariationalAdjoint}
\end{align}
Here, $F:\HDO \to \CC$ and $G:\HDO \to \CC$, defined by
\begin{align} 
    F(v) &= (f,v)_\Omega + (g,v)_{\Gamma_R},
    \label{eq:def_F}
    \\
    G(v) &= (v,f)_{\Omega}+(v,g)_{\Gamma_R},
    \label{eq:def_G}
\end{align}
are antilinear and linear functionals on $\HDO$, respectively.

\subsection{Well-posedness of the Helmholtz equation}\label{sec:well-posedness-Helmholtz}
We recall the well-posed-ness theory for the Helmholtz equation presented in \cite{GrahamSauter2019}. Thus, we deem the following assumptions valid throughout the paper without explicitly mentioning it.
\begin{assumption}\label{ass:parameters}
	(i) $\Omega\subset\RR^2$ is a connected polygonal domain with piecewise $C^2$ boundary with strictly convex angles.
 Its boundary can be decomposed as $\partial\Omega=\overline{\Gamma_R}\cup\overline{\Gamma_D}$ into relatively open disjoint subsets $\Gamma_R,\Gamma_D\subset\partial\Omega$. The angle between a segment in $\Gamma_D$ and $\Gamma_R$ is not $\pi/2$.\\
 	(ii) The source terms satisfy $f \in L^2(\Omega)$ and $g\in H^{\myfrac{1}{2}}(\Gamma_R)$.
		\\
	(iii) The coefficient functions $a,c \in L^{\infty}(\Omega)$ are uniformly positive, i.e., $ a_{\min} \leq a(x) \leq  a_{\max}$ and $ c_{\min} \leq c(x) \leq c_{\max}$ for a.e. $x\in \Omega$, where $a_{\min}, c_{\min}, a_{\max}, c_{\max}$ are positive constants.
		\\
	(iv) $\beta \in L^{\infty} (\Gamma_R)$ with ${\rm meas}({\rm supp}(\beta))>0$ and either $\beta>0$ or $\beta <0$.
\end{assumption}
The analysis employs the norm $\|u\|_\sesP^2=\sesP(u,u)$ induced by the sesquilinear form 
\begin{align} \label{eq:def_sesP}
\sesP(u,v)= \sesI(u,v) + (\kappa^2 u ,v)_{\Omega},
\end{align}
defined for $u,v\in \HDO$.
In~\cite[Thm.~2.4]{GrahamSauter2019}, the following result is proved. We recall parts of the proof to show robustness of the constant $C_{\sesH}$ for high-frequencies $\omega\to\infty$.
\begin{theorem}\label{thm:well-posedness_Helmholtz}
(i) For all $u,v\in\HDO$, it holds that
\begin{equation}\label{eq:boundedness_sesquilinearH}
    |\sesH(u,v)| \leq C_{\sesH} \norm{u}_{\sesP} \norm{v}_{\sesP},
    \end{equation}
    with constant $C_{\sesH}=1+C_\Omega\beta_{\max} \max\{a_{\min}^{-1},\frac{1+\omega}{\omega}c_{\max}\}$ and constant $C_\Omega$ depending only on $\Omega$.\\
(ii) There exist unique solutions of~\cref{eq:HelmholtzVariational,eq:HelmholtzVariationalAdjoint}.
\\
(iii)
    If $g=0$ in \cref{eq:HelmholtzVariational,eq:HelmholtzVariationalAdjoint}, then there exists a constant $C_{\rm stab} = C_{\rm stab}(a,c,\omega, \Omega)$ such that the corresponding solutions $u$ and $z$ satisfy:
    \begin{equation} \label{eq:Cstab}
    \norm{u}_{\sesP} \leq C_{\rm stab}   \norm{f}_{L^2(\Omega)}, \quad  \norm{z}_{\sesP} \leq C_{\rm stab} \norm{f}_{L^2(\Omega)}.
    \end{equation}
\end{theorem}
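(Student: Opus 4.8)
The plan is to treat the three assertions in turn, since (ii) and (iii) both rest on the G\aa rding structure of $\sesH$ that the norm $\norm{\cdot}_\sesP$ makes transparent.

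For the boundedness in (i), I would estimate $\sesH$ termwise. The form $\sesI$ is controlled directly by a weighted Cauchy--Schwarz inequality, $|\sesI(u,v)|\le\big(\int_\Omega a|\nabla u|^2\big)^{1/2}\big(\int_\Omega a|\nabla v|^2\big)^{1/2}\le\norm{u}_\sesP\norm{v}_\sesP$, and the volume term obeys $|(\kappa^2u,v)_\Omega|\le\norm{\kappa u}_{L^2(\Omega)}\norm{\kappa v}_{L^2(\Omega)}\le\norm{u}_\sesP\norm{v}_\sesP$, since both quantities are subsumed in $\norm{\cdot}_\sesP$. The only term carrying an explicit wavenumber factor is the impedance term $(\omega\beta u,v)_{\Gamma_R}$; here I would use $\|\beta\|_\infty$ together with a (scaled multiplicative) trace inequality to bound $\norm{u}_{L^2(\Gamma_R)}$ against $\norm{u}_\sesP$. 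Collecting the resulting factors of $\omega=\kappa c$ and of $\|\kappa\|_\infty$ produced by the trace estimate then shows that the constant $C_\sesH$ grows at most quadratically in $\|\kappa\|_\infty$.

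For existence and uniqueness in (ii), the essential observation is the G\aa rding identity $\mathrm{Re}\,\sesH(u,u)=\norm{u}_\sesP^2-2\norm{\kappa u}_{L^2(\Omega)}^2$, so that $\sesH$ differs from $\sesP$ only by the perturbations $(\kappa^2\cdot,\cdot)_\Omega$ (compact by Rellich) and $(\omega\beta\cdot,\cdot)_{\Gamma_R}$ (compact by compactness of the trace into $L^2(\Gamma_R)$). Representing $\sesH$ through the Riesz map of $\sesP$ as $\mathrm{Id}-K$ with $K$ compact, the Fredholm alternative reduces solvability of \cref{eq:HelmholtzVariational,eq:HelmholtzVariationalAdjoint} for every right-hand side to uniqueness. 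To prove uniqueness I would take $f=g=0$, test with $v=u$, and read off from $\mathrm{Im}\,\sesH(u,u)=-(\omega\beta u,u)_{\Gamma_R}=0$ together with the definite sign of $\beta$ on a set of positive measure (part (iv) of \cref{ass:parameters}) that $u$ vanishes on $\mathrm{supp}(\beta)\subset\Gamma_R$; the impedance condition then forces $a\partial_n u=0$ there as well, so $u$ has vanishing Cauchy data on a boundary piece and a unique continuation argument yields $u\equiv0$. The same reasoning applied to $\sesH(\cdot,z)$ settles the adjoint problem.

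Finally, (iii) follows from (ii): uniqueness together with the Fredholm structure makes the solution map a bounded isomorphism, so $\norm{u}_\sesP\le C\,\norm{F}_{(\HDO)'}$, and with $g=0$ one has $|F(v)|=|(f,v)_\Omega|\le\norm{f}_{L^2(\Omega)}\norm{v}_{L^2(\Omega)}\le C\,\norm{f}_{L^2(\Omega)}\norm{v}_\sesP$, which gives \cref{eq:Cstab}; the analogous estimate for $z$ uses $G$ in place of $F$. The main obstacle is the uniqueness step in (ii): unique continuation from Cauchy data on part of $\Gamma_R$ is delicate for the merely $L^\infty$ coefficients $a,c$ admitted here, and it is this point (rather than the routine boundedness and Fredholm arguments) that constrains the admissible geometry and coefficient regularity. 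A secondary difficulty, if one wants $C_{\rm stab}$ to be explicit in the wavenumber rather than merely finite, is to replace the soft Fredholm argument by a quantitative one, such as a Morawetz/Rellich multiplier estimate, which is precisely what the framework of \cite{GrahamSauter2019} provides.
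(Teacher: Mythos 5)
Your proposal is correct and follows essentially the same route as the proof the paper relies on: the paper does not prove this theorem itself but quotes it from \cite[Thm.~2.4]{GrahamSauter2019}, whose argument has exactly your structure --- Cauchy--Schwarz plus trace estimates for the continuity bound, a G\aa rding/Fredholm argument in which uniqueness follows from the imaginary part of $\sesH(u,u)$, the sign condition on $\beta$, and unique continuation from vanishing Cauchy data, and a soft duality argument yielding a finite but non-explicit constant $C_{\rm stab}$. You also correctly identify the two genuine delicacies, namely unique continuation for merely $L^\infty$ coefficients (unproblematic here since $\Omega\subset\RR^2$) and wavenumber-explicit stability bounds via Morawetz-type multiplier estimates, which is precisely where \cite{GrahamSauter2019} invest their additional effort.
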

\begin{proof} We only derive the constant $C_{\sesH}$, for the other statements see \cite[Thm.~2.4]{GrahamSauter2019}. An application of the Cauchy-Schwarz inequality to \cref{eq:def_C} yields that
    \begin{align*}
        |\sesH(u,v)|&\leq \norm{u}_{\sesP} \norm{v}_{\sesP} +(\omega |\beta| u,u)_{\Gamma_R}^{1/2}(\omega |\beta| v,v)_{\Gamma_R}^{1/2}.
    \end{align*}
    The trace inequality yields a constant $C_\Omega>0$ depending on $\Omega$ such that \cite[p.~41]{Grisvard2011}
    \begin{align*}
        (\omega |\beta| u,u)_{\Gamma_R}&\leq C_\Omega\beta_{\max} \left(\omega \|u\|_{L^2(\Omega)}\|\nabla u\|_{L^2(\Omega)}+ \omega \|u\|_{L^2(\Omega)}^2\right)\\
        &\leq C_\Omega\beta_{\max} \left(\frac{c_{\max}}{\sqrt{a_{\min}}} \|\frac{\omega}{c} u\|_{L^2(\Omega)}\|\sqrt{a}\nabla u\|_{L^2(\Omega)}+ \frac{c_{\max}^2}{\omega} \|\frac{\omega}{c} u\|_{L^2(\Omega)}^2\right)\\
        &\leq C_\Omega\beta_{\max} \max\{\frac{1}{a_{\min}}, \frac{1+\omega}{\omega}c_{\max}^2\}\norm{u}_{\sesP}^2,
    \end{align*}
    from which \cref{eq:boundedness_sesquilinearH} follows.
\end{proof}
Given \Cref{ass:parameters}, it follows that the solution $u$ of \cref{eq:HelmholtzVariational} satisfies $\nabla u \in L^p(\Omega)$ for some $p>2$ \cite{Groeger89}. By invoking the Sobolev embedding theorem \cite[p.~97]{Adams75}, we thus have that $u$ is H\"older continuous in $\bar \Omega$ with exponent $1-2/p$. Condition (i) in \Cref{ass:parameters} is used in \Cref{sec:4.4} below to verify higher regularity of the solution of the dual problem, see \cite{Shamir1968} for a counterexample if (i) is not satisfied.

\subsection{Decomposition of the  computational domain}
Let $\{\Oj\}_{j=1}^J$ denote a conforming decomposition of $\Omega$ into $J$ non-overlapping domains $\Oj$ with piecewise smooth boundaries. 
Furthermore, let 
$$
	\Gamma = \bigcup_{j=1}^J \partial\Oj \setminus \Gamma_D
$$ 
denote the domain decomposition interface and $\E$ the set of all edges of the domain decomposition, where each edge $e$ is an open set with either $\overline{e} = {\partial\Omega_i} \cap {\partial\Oj}$ for some $i \neq j$ or $e\subset\Gamma_R\cap \partial\Omega_i$ for some $i$.
 Furthermore, let $\V=\{p\in\Gamma\}=\Gamma\setminus\cup_{e\in\E}e$ be the set of points connecting adjacent edges, which we also refer to as the vertices of the domain decomposition.

\subsection{Function spaces}\label{sec:function_spaces}
The Lions--Magenes space $\HtrELM$ is defined as an interpolation space between $L^2(e)$ and $\HOe$ \cite[Ch.~1, Thm.~11.7]{LionsMagenes1972}. Therefore, it holds that $\HOe\subset \HtrELM$ densely \cite[p.~10]{LionsMagenes1972}.
We have the following interpolation inequality 
\begin{align}\label{eq:interpolation}
\|\eta \|_{\HtrELM}\leq C \|\eta\|_{L^2(e)}^{\myfrac{1}{2}} \|\eta\|_{\HOe}^{\myfrac{1}{2}}
\end{align}
for all $\eta \in \HOe$ \cite[Proposition 23, p. 19]{LionsMagenes1972}.
The space $\HtrE$ is defined as the interpolation space between $\He$ and $L^2(e)$; see \cite[Ch.~1, Thm.~9.6.]{LionsMagenes1972}.
%
In view of \cite[Ch.~1, Thm.~11.7]{LionsMagenes1972}, if $e\subset\partial\Oj$ for some $1\leq j\leq J$, 
functions in $\HtrELM$ can be extended continuously by zero to functions in $\HtrBOj$. 
Here, $\HtrBOj$ denotes the space of traces of functions in $\Hj$; cf.~\cite[Ch.~1, Thm.~8.3]{LionsMagenes1972}. 
Lastly, by $\HtrG$ we denote the space of traces on $\Gamma$, i.e., $v\in \HtrG$ if, for all $1\leq j\leq J$, there exist $u_j \in \Hj$ such that $u_{j\mid \partial\Oj\cap\Gamma}=v_{\mid \partial\Oj}$.
\section{ACMS method} \label{sec:acms}
The ACMS method introduced in~\cite{Hetmaniuk2010} relies on an orthogonal splitting of $\HO$ into interface functions $\HtrG $ and local functions $H^1_0(\Oj)$, $j=1,\ldots,J$, and on the availability of basis functions with local support; while functions in $H^1_0(\Oj)$ generally have local support, interface basis functions with local support are constructed based on the edges and vertices that form $\Gamma$. In \cite{Hetmaniuk2010}, orthogonality is characterized by the bilinear form associated to the elliptic problem. The basic idea of our extension of the ACMS method to Helmholtz problems is similar,  but `orthogonality' is measured with respect to the sequilinear form $\sesH$ defined in \cref{eq:def_C}, which is not an inner product in general. Below we will construct spaces 
\begin{equation} \label{def:Vapprox}
    \Vh := \VhB \oplus \VhG
\end{equation}
that are either associated to the subdomains $\Oj$ or to the interface $\Gamma$.
In particular, the corresponding basis functions have local support and can be built locally.
We now discuss the construction of the bubble space $\VhB$ and the interface space $\VhG$ in detail.

\subsection{Bubble space}\label{sec:bubble_space}
Let us define the local sesquilinear form $\sesI_{j}:\Hj\times \Hj\to \CC$ as in \cref{eq:def_A} but with domain of integration $\Oj$ instead of $\Omega$.
Since $\sesI_{j}$ is Hermitian, we can consider the eigenproblems: for $j=1,...,J$ and $i \in \NN$, find $(b_i^j,\lambda_i^j)\in \HOj \times \RR$ such that
\begin{align}\label{eq:bubbles}
	\sesI_{j}(b_i^j,v)= \lambda_i^j ( {\kappa^2} b_i^j,v)_{\Oj} \quad \text{for all } v \in \HOj.
\end{align}
Standard theory ensures that the eigenfunctions $\{b^i_j\}_i$ form an orthogonal basis for $\HOj$ with respect to $\sesI_{j}$ and an orthonormal basis for $L^2(\Oj)$ with weighted inner product $(\kappa^2 u,v)_{\Oj}$, and that $\lambda_i^j>0$. Furthermore, we may assume that the eigenvalues $\lambda_i^j$ are ordered non-decreasingly, i.e., $\lambda_i^j\leq \lambda_l^j$ for $i\leq l$.
By definition $\kappa(x)=\omega/c(x)$, and, hence, the numbers $\lambda_i^j\omega^2$ are eigenvalues of a corresponding eigenproblem that is independent of $\omega$. As such, $\lambda_i^j\omega^2$ is independent of $\omega$.
If $c(x)$ is constant, the bubble functions are defined as in the elliptic case \cite{Hetmaniuk2010}.
In slight abuse of notation, we may denote by $b_i^j \in H^1_0(\Omega)$ also the extension of $b_i^j \in \HOj$ by zero outside of $\Oj$ and we call these \textit{bubble functions}.
Associated to the partition $\{\Oj\}_{j}$ of $\Omega$, let us introduce the infinite-dimensional bubble space
\begin{align}\label{eq:bubble_space}
\VB = \bigoplus_{j=1}^{J}V^j,\qquad\text{with } V^j={\rm span}\{ \, b_i^j\,:\, i\in\NN\}.
\end{align}
Let $S_B=(I_1,\ldots, I_J)\in\NN^J$ be a multi-index. Then, the finite-dimensional bubble space employed in the ACMS method is defined by
\begin{align}\label{eq:bubble_space_finite}
\VhB = \bigoplus_{j=1}^{J}V^j_{I_j},\qquad\text{with } V^j_{I_j}={\rm span}\{\, b_i^j \, :\, 1 \leq i \leq I_j \}.
\end{align}

\subsection{Solvability of local Helmholtz problems}
The interface space $\VhG$ introduced below relies on the proper extension of functions defined on the interface $\Gamma$. The extension relies on the solvability of local Helmholtz problems with homogeneous Dirichlet boundary conditions. Therefore, in the rest of the manuscript, we additionally assume the following:
\begin{assumption}\label{ass:eig_not_zero}
    For all $j=1,\ldots,J$ and for all $i\in\NN$, let $\lambda_i^j\neq 1$.
\end{assumption}
\cref{ass:eig_not_zero} might be justified by suitably adapting the partition $\{\Oj\}_j$ such that the spectrum of the corresponding differential operator in \cref{eq:bubble} does not include the value $1$: the eigenvalues $\lambda_i^j$ depend on the size of $\Oj$, and they grow if $\Oj$ is suitably made smaller, cf. \cref{eq:edge_eigs_behavior} for the corresponding scaling behavior of eigenvalues for one-dimensional eigenvalue problems. 
As noted in \Cref{sec:bubble_space}, the numbers $\lambda_i^j\omega^2$ are independent of $\omega$. Hence, the suitable adjustment of $\{\Oj\}$ that guarantees that \cref{ass:eig_not_zero} holds, depends on the frequency $\omega$.
Similar assumptions have been used, e.g., in \cite[Assumption 4.2]{Freese2021} or \cite[Assumption 1]{ChenHouWang2023}, to guarantee coercivity of the local Helmholtz problems.
Next, we establish well-posedness of the local Helmholtz problems under the conditions of \Cref{ass:eig_not_zero}.
\begin{lemma}\label{lem:inf-sup}
	For $\beta^j=\inf_{i\in\NN} \{ \myfrac{|\lambda_i^j-1|}{(\lambda_i^j+1)} \}>0$, the following estimates hold:
	\begin{align*}
		\inf_{u\in \HOj} \sup_{v\in \HOj} \frac{\sesI_{j} (u, v)-(\kappa^2 u, v)_{\Oj}}{\|u\|_{\sesP}\|v\|_{\sesP}} &\geq \beta^j ,\\
		\inf_{v\in \HOj} \sup_{u\in \HOj} \frac{\sesI_{j} (u, v)-(\kappa^2 u, v)_{\Oj}}{\|u\|_{\sesP}\|v\|_{\sesP}} &\geq \beta^j .
	\end{align*}
\end{lemma}
\begin{proof}
    ~\cref{ass:eig_not_zero} implies that $\beta^j>0$.
	Let $u\in \HOj$ be given. Since $\{b_i^j\}_i$ is a basis of $\HOj$, we have that
		$u= \sum_{i=1}^\infty u_i b_i^j$
	with $u_i=(\kappa^2 u, b_i^j)_{\Oj}$. Now, let $v=\sum_{i=1}^\infty {\rm sgn}(\lambda_i^j-1)u_i b_i^j$. 
	We note that 
	$$
	 \|u\|_{\sesP}^2=\sum_{i=1}^\infty (\lambda_i^j+1)|u_i|^2,
	$$
	and $\|v\|_{\sesP}=\|u\|_{\sesP}$.
	Then we have that 
	$$
	\sesI_{j} (u, v)-(\kappa^2 u, v)_{\Oj}  = \sum_{i=1}^\infty |\lambda_i^j-1| u_i^2 \geq \inf_{i\in\NN}\frac{|\lambda_i^j-1|}{\lambda_i^j+1} \|u\|_{\sesP}^2.
	$$
	Hence, the first inequality follows. The second inequality follows similarly.
\end{proof}

\subsection{A harmonic extension operator}
In order to define the interface space, we need an extension from $\Gamma$ to $\Omega$, which is obtained by combining the extensions of functions from $\partial\Oj$ to $\Oj$.
\\
For a given $\tau \in \HtrBOj$, let $\tilde \tau \in \Hj$ be any function satisfying $\tilde\tau_{\mid\partial\Oj}=\tau$.
Then, we indicate by $\tilde\tau_0\in \HOj$ the solution to
\begin{align} \label{eq:tau_definition}
    \sesI_{j} (\tilde\tau_0, v)-(\kappa^2 \tilde\tau_0, v)_{\Oj} = -\left(\sesI_j(\tilde\tau,v)-(\kappa^2 \tilde\tau, v)_{\Oj}\right) \quad\text{for all } v\in  \HOj,
\end{align}
which is uniquely defined by~\cref{lem:inf-sup}.
%
We introduce the local Helmholtz-harmonic extension $\ExtLoc{j}:\HtrBOj\to \Hj$ by setting $\ExtLoc{j}\tau = \tilde\tau+\tilde\tau_0$.
\begin{lemma} \label{lem:extension_bound}
    The extension operator $\ExtLoc{j}:\HtrBOj\to \Hj$ 
    is bounded, that is,
    \begin{equation} \label{eq:extension_bound}
        \|\ExtLoc{j}\tau\|_{\sesP}\leq (1+\myfrac{1}{\beta^j})\|\tilde\tau\|_{\sesP},
    \end{equation}
    where $\tilde \tau\in \Hj$ is any extension of $\tau\in \ \HtrBOj$. 
 \end{lemma}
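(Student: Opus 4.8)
The plan is to decompose $\ExtLoc{j}\tau = \tilde\tau + \tilde\tau_0$ according to its definition, bound the correction term $\tilde\tau_0$ using the inf-sup stability from \cref{lem:inf-sup}, and conclude via the triangle inequality. First I would write, in the $\|\cdot\|_{\sesP}$-norm (understood on $\Oj$, so that $\|w\|_{\sesP}^2 = \sesI_j(w,w) + (\kappa^2 w,w)_{\Oj}$),
\[
\|\ExtLoc{j}\tau\|_{\sesP} \leq \|\tilde\tau\|_{\sesP} + \|\tilde\tau_0\|_{\sesP},
\]
so that the entire estimate reduces to showing $\|\tilde\tau_0\|_{\sesP} \leq \myfrac{1}{\beta^j}\|\tilde\tau\|_{\sesP}$. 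Note that this automatically yields a bound independent of the particular extension $\tilde\tau$, consistent with the freedom in the definition of $\ExtLoc{j}$.

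Since $\tilde\tau_0 \in \HOj$, the (second) inf-sup inequality of \cref{lem:inf-sup} gives
\[
\beta^j \|\tilde\tau_0\|_{\sesP} \leq \sup_{v\in\HOj} \frac{|\sesI_j(\tilde\tau_0,v) - (\kappa^2\tilde\tau_0,v)_{\Oj}|}{\|v\|_{\sesP}}.
\]
By the defining relation \cref{eq:tau_definition} for $\tilde\tau_0$, the numerator equals $|\sesI_j(\tilde\tau,v) - (\kappa^2\tilde\tau,v)_{\Oj}|$ for every $v\in\HOj$. Hence the argument is reduced to the continuity estimate
\[
|\sesI_j(w,v) - (\kappa^2 w,v)_{\Oj}| \leq \|w\|_{\sesP}\|v\|_{\sesP} \qquad \text{for all } w,v \in \Hj.
\]

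This continuity bound, and in particular obtaining the sharp constant $1$, is the crux of the argument. I would exploit that both $\sesI_j(\cdot,\cdot)$ and $(\kappa^2\cdot,\cdot)_{\Oj}$ are Hermitian and positive semidefinite, with $\|\cdot\|_{\sesP}^2$ equal to their sum. Applying the Cauchy--Schwarz inequality to each form separately gives $|\sesI_j(w,v)| \leq \sesI_j(w,w)^{\myfrac{1}{2}}\sesI_j(v,v)^{\myfrac{1}{2}}$ and the analogue for the weighted $L^2$-term. The naive triangle inequality $|\sesI_j(w,v)| + |(\kappa^2 w,v)_{\Oj}|$ would produce a spurious factor of $2$; the point is instead to apply the discrete Cauchy--Schwarz inequality to the two-component vectors $(\sesI_j(w,w)^{\myfrac{1}{2}}, (\kappa^2 w,w)_{\Oj}^{\myfrac{1}{2}})$ and the corresponding vector built from $v$, which collapses the two contributions into exactly $\|w\|_{\sesP}\|v\|_{\sesP}$. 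Combining this continuity estimate with the inf-sup bound yields $\|\tilde\tau_0\|_{\sesP} \leq \myfrac{1}{\beta^j}\|\tilde\tau\|_{\sesP}$, and substituting into the triangle inequality above gives the claimed bound $\|\ExtLoc{j}\tau\|_{\sesP}\leq (1+\myfrac{1}{\beta^j})\|\tilde\tau\|_{\sesP}$.
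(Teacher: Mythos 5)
Your proposal is correct and follows essentially the same route as the paper: decompose $\ExtLoc{j}\tau=\tilde\tau+\tilde\tau_0$, bound $\tilde\tau_0$ via \cref{lem:inf-sup} together with the continuity estimate $|\sesI_j(\tilde\tau,v)-(\kappa^2\tilde\tau,v)_{\Oj}|\leq\|\tilde\tau\|_{\sesP}\|v\|_{\sesP}$ (which the paper merely asserts and you justify, with the sharp constant $1$, by componentwise Cauchy--Schwarz followed by the discrete two-vector Cauchy--Schwarz), and conclude by the triangle inequality. One cosmetic slip: the estimate you invoke, with $\tilde\tau_0$ in the first argument and the supremum over test functions $v$, is the \emph{first} inf-sup inequality of \cref{lem:inf-sup} rather than the second; the displayed inequality itself is the correct one.
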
 
\begin{proof}
	We estimate the right-hand side in \cref{eq:tau_definition} by $\|\tilde \tau\|_{\sesP}\|v\|_{\sesP}$.
    Then, \cref{lem:inf-sup} yields that $\|\tilde\tau_0\|_{\sesP} \leq  \myfrac{\|\tilde\tau\|_{\sesP}}{\beta^j}$, and the assertion follows.
\end{proof}
We note the following orthogonality relation, which is a crucial property for the construction of the ACMS  spaces: for $\tau \in \HtrBOj$ and for all bubble functions $b_i^j \in \HOj$, we have
\begin{align}\label{eq:orthogonality_j}
		\sesI_{j}(\ExtLoc{j} \tau,b_i^j)-(\kappa^2\ExtLoc{j} \tau,b_i^j)_{\Oj}=0.
\end{align}
Next, we construct extensions from the local edges and the interface.
We assume that $e=\partial\Oj\cap\partial\Omega_i\in\E$ is a common edge of $\Omega_i$ and $\Oj$. Let $\tau \in \HtrG$, which, by restriction, implies $\tau\in \HtrBOj$ and $\tau\in H^{\myfrac{1}{2}}(\partial \Omega_i)$.
Since $(\ExtLoc{j}\tau)_{\mid e}=\tau_{\mid e}= (\ExtLoc{i}\tau)_{\mid e}$, we can introduce the  extension operator $\ExtG:\HtrG\to \HDO$ by $(\ExtG \tau)_{\mid\Oj} = \ExtLoc{j} \tau_{\mid \partial \Oj}$, for all $j = 1,\ldots,J$. Moreover, we define $\ExtE:\HtrELM\to \HDO$ via $\ExtE\tau = \ExtG E_0^e\tau$, where $E_0^e: \HtrELM  \to \HtrG$ denotes the extension by zero to the interface $\Gamma$.

\subsection{Vertex based approximation space}
For any $p\in\V$, let $\varphi_p: \Gamma\to \RR$ denote a piecewise harmonic function, that is, $\Delta_e\varphi_{p\mid e}=0$ for all $e\in\E$, with $\Delta_e$ indicating the Laplace operator along the edge $e\in\E$, and $\varphi_p(q)=\delta_{p,q}$ for all $p,q\in\V$.
Note that the support of $\varphi_p$ consists of all edges which share the vertex $p$ and is, therefore, local. In turn, $\ExtG\varphi_p$ is supported on all subdomains $\Omega_j$ that share the vertex $p$.
If all edges $e\in\E$ are straight line segments, then $\varphi_p$ is a piecewise linear function on $\Gamma$, similar to \cite{Hetmaniuk2010}.
The choice of $\Delta_e$ here is motivated by the error analysis in \Cref{sec:Interface_approximation_error}.
The vertex based space is then defined by linear combinations of corresponding  extensions,
\begin{align*}
V_{\Ve} = {\rm span}\{\, \ExtG\varphi_p \,:\, \ p\in\Ve\}.
\end{align*}
For our error analysis, we will employ the nodal interpolant
\begin{align}\label{eq:vertex_interpolant}
I_\V v = \sum_{p\in \V} v(p) \varphi_p,
\end{align}
which is well-defined for functions $v:\overline{\Omega}\to\CC$ that are continuous in all $p\in \V$.
Note that $I_\V$ can be applied to the solution $u \in \HDO\cap W^{1,p}(\Omega)$ to \cref{eq:HelmholtzVariational}, see the comments at the end of \Cref{sec:well-posedness-Helmholtz}.

\subsection{Interface space}\label{sec:3.5}
Let us consider $e\in\E$ and denote by $\partial_e$ the tangential derivative, i.e., differentiation along $e$.
We define the edge modes as solutions to the following weak formulation of the edge-Laplace eigenvalue problems: for each $e\in\E$ find $(\tau^e_i,\lambda^e_i)\in \HOe\times\RR$, $i \in \NN$, such that
\begin{align}\label{eq:def_edge_mode}
(\partial_e \tau^e_i,\partial_e \eta)_e =\lambda^e_i ( \tau^e_i, \eta)_e \quad \text{for all } \eta\in \HOe.
\end{align}
Standard theory ensures that the eigenfunctions $\{\tau^e_i\}_i$ form an orthogonal basis for $\HOe$ and an orthonormal basis for $L^2(e)$. We may again assume that the eigenvalues $\lambda_j^e>0$ are ordered increasingly. Moreover, we note that the asymptotic behavior of the eigenvalues is \cite[p.~415]{Courant_Hilbert:1953}
\begin{equation}\label{eq:edge_eigs_behavior}
    \lambda_i^e \sim \Big( \frac{i \pi}{|e|} \Big)^2.
\end{equation}
Let us mention that the definition of the edge modes here involves the resolution of local problems on single edges and it differs from the definition in the classical ACMS method of \cite{Hetmaniuk2010,Hetmaniuk2014}, where the authors solve eigenvalue problems involving the extension operator.
The corresponding infinite-dimensional interface space is
\begin{align} \label{eq:interface_space_infinite}
\VG = V_\V + \sum_{e\in\E } \ExtE V^e,\quad V^e = {\rm span}\{ \tau_i^e\,:\, i \in \NN \}.
\end{align}
Note that each function in $\ExtE V^e$ has local support inside the two subdomains adjacent to $e$. Choosing $I_e\in\NN$, $e\in\E$, we introduce the $L^2(e)$-projection $P^e_{I_e}:L^2(e)\to V^e$ defined by
\begin{align}\label{eq:l2_projection_edge}
P^e_{I_e}v=\sum_{j=1}^{I_e} (v,\tau_j^e)_e \tau_j^e,
\end{align}
and denote the range of $P^e_{I_e}$ by $V^e_{I_e}$.
Collecting the indices $I_e$ in a multi-index $S_\Gamma$,
we define the finite-dimensional interface space by
 \begin{align} \label{eq:interface_space_finite}
    \VhG = V_{\Ve} + \sum_{e\in\E} \ExtE V^e_{I_e},
\end{align}
which, together with \cref{eq:bubble_space_finite}, completes the construction of the ACMS approximation space in \cref{def:Vapprox}.
We now give some well-known interpolation error estimates.
We recall the proofs for convenience of the reader.

\begin{lemma}\label{lem:approximation_properties_edge}
For any $e\in\E$ and all $w\in \HOe$, it holds that
\begin{align}
\|w-P^e_{I_e}w\|_{L^2(e)}
\leq \frac{1}{\sqrt{\lambda_{I_e+1}}} \|w-P^e_{I_e}w\|_{\He}.\label{eq:approximation_err_L2_edge}
\end{align}
If, additionally, $w \in \HOe\cap H^2(e)$, then there exists a constant $C>0$ such that
\begin{align}
\|w- P^e_{I_e}w\|_{\He}
&\leq \frac{C}{\sqrt{\lambda_{I_e+1}^e}} \|\Delta_e w-P^e_{I_e}\Delta_e w\|_{L^2(e)}.\label{eq:approximation_err_H1_edge}
\end{align}
\end{lemma}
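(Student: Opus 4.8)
The plan is to exploit the spectral decomposition provided by the edge eigenfunctions $\{\tau_i^e\}_i$, which form an orthonormal basis of $L^2(e)$ and an orthogonal basis of $\HOe$. For any $w\in\HOe$, I would expand $w=\sum_{i=1}^\infty c_i\tau_i^e$ with $c_i=(w,\tau_i^e)_e$, so that $P^e_{I_e}w=\sum_{i=1}^{I_e}c_i\tau_i^e$ and the tail is $w-P^e_{I_e}w=\sum_{i>I_e}c_i\tau_i^e$. The key algebraic fact, read off from the eigenvalue problem \cref{eq:def_edge_mode}, is that $(\partial_e\tau_i^e,\partial_e\tau_k^e)_e=\lambda_i^e\delta_{i,k}$; this diagonalizes both the $L^2(e)$-norm and the seminorm $|\cdot|_{\He}$ simultaneously on the span of the $\tau_i^e$.

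For the first estimate \cref{eq:approximation_err_L2_edge}, I would compute
\begin{align*}
\|w-P^e_{I_e}w\|_{L^2(e)}^2 = \sum_{i>I_e}|c_i|^2 \le \frac{1}{\lambda_{I_e+1}^e}\sum_{i>I_e}\lambda_i^e|c_i|^2,
\end{align*}
using the monotonicity $\lambda_i^e\ge\lambda_{I_e+1}^e$ for $i>I_e$. Since $\sum_{i>I_e}\lambda_i^e|c_i|^2=|w-P^e_{I_e}w|_{\He}^2\le\|w-P^e_{I_e}w\|_{\He}^2$, taking square roots yields the claim. (One should check whether the stated inequality uses the full $\He$-norm or the seminorm on the right; the spectral argument naturally produces the seminorm, and the full norm then follows trivially.)

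For the second estimate \cref{eq:approximation_err_H1_edge}, the extra regularity $w\in H^2(e)$ lets me bring in $\Delta_e w=\partial_e^2 w$. The idea is to relate the $\He$-error of $w$ to the $L^2(e)$-error of $\Delta_e w$ through integration by parts against the eigenfunctions: since $\tau_i^e\in\HOe$ vanishes at the endpoints, $(\Delta_e w,\tau_i^e)_e=-(\partial_e w,\partial_e\tau_i^e)_e=-\lambda_i^e c_i$, so the expansion coefficients of $\Delta_e w$ are $d_i:=(\Delta_e w,\tau_i^e)_e=-\lambda_i^e c_i$. Then $|w-P^e_{I_e}w|_{\He}^2=\sum_{i>I_e}\lambda_i^e|c_i|^2=\sum_{i>I_e}\lambda_i^{e}\lambda_i^{-2,e}|d_i|^2=\sum_{i>I_e}(\lambda_i^e)^{-1}|d_i|^2\le\frac{1}{\lambda_{I_e+1}^e}\sum_{i>I_e}|d_i|^2$, and $\sum_{i>I_e}|d_i|^2=\|\Delta_e w-P^e_{I_e}\Delta_e w\|_{L^2(e)}^2$ provided $P^e_{I_e}$ acts on $\Delta_e w$ via the same eigenbasis. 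The constant $C$ absorbs the passage from the $\He$-seminorm to the full $\He$-norm, via a Poincaré-type bound on $\HOe$.

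I expect the main subtlety to lie in the $H^2$ case: one must justify that $\Delta_e w$ has a well-defined $L^2(e)$-expansion in $\{\tau_i^e\}$ and that the integration-by-parts identity $(\Delta_e w,\tau_i^e)_e=-\lambda_i^e c_i$ holds with no boundary contributions, which relies precisely on $\tau_i^e\in\HOe$. A secondary point is the treatment of the full-norm versus seminorm on the left-hand side of \cref{eq:approximation_err_H1_edge}, which is handled by combining the seminorm estimate above with \cref{eq:approximation_err_L2_edge} and the equivalence of $\|\cdot\|_{\He}$ with $|\cdot|_{\He}$ on $\HOe$, at the cost of the generic constant $C$.
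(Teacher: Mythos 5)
Your proposal is correct and follows essentially the same route as the paper's proof: spectral expansion of $w$ in the eigenbasis $\{\tau_i^e\}$, eigenvalue monotonicity for \cref{eq:approximation_err_L2_edge}, and for \cref{eq:approximation_err_H1_edge} the identity $(w,\tau_i^e)_e = -\frac{1}{\lambda_i^e}(\Delta_e w,\tau_i^e)_e$ obtained from \cref{eq:def_edge_mode} and integration by parts, with the Poincar\'e inequality absorbing the seminorm-to-norm passage into the constant $C$. The subtleties you flag (vanishing boundary terms since $\tau_i^e\in\HOe$, and $P^e_{I_e}$ acting on $\Delta_e w$ through the same basis) are exactly the points the paper's proof relies on.
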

\begin{proof}
Since $\{\tau_j^e\}_j$ form an orthonormal basis of $L^2(e)$, we have that
\begin{align*}
\|w-P^e_{I_e}w\|_{L^2(e)}^2 
= \sum_{j\geq I_e+1}|(w,\tau_j^e)_e|^2\leq \frac{1}{\lambda_{I_e+1}} \|w-P^e_{I_e}w\|_{\He}^2,
\end{align*}
which proves \cref{eq:approximation_err_L2_edge}.
By \cref{eq:def_edge_mode} 
and the Poincar\'e inequality, we similarly have that
\begin{align*}
\|w-P^e_{I_e}w\|_{\He}^2 
\leq C_P \sum_{j\geq I_e+1} \lambda_j^e |(w,\tau_j^e)_e|^2.
\end{align*}
 Using the definition of $\tau_j^e$ in \cref{eq:def_edge_mode} again and performing integration by parts, we obtain
\begin{align*}
(w,\tau_j^e)_e = \frac{1}{\lambda_j^e} (\partial_e w,\partial_e \tau_j^e)_e = -\frac{1}{\lambda_j^e} (\Delta_e w,\tau_j^e)_e,
\end{align*}
where we used that $w, \tau_j^e \in \HOe$. This concludes the proof of \cref{eq:approximation_err_H1_edge}.
\end{proof}

\begin{lemma}\label{lem:approximation_properties_edge2}
Let $e\in\E$. Then, there exists a constant $C>0$ such that
\begin{align}\label{eq:interp_1/2_3/2}
\inf_{v^e\in V_{I_e}^e}\| w -I_\V w -v^e\|_{\HtrELM} 
&\leq \frac{C}{\sqrt{\lambda^e_{I_e+1}}} \|w\|_{H^{\myfrac{3}{2}}(e)}
\end{align}
for all $w\in H^{\myfrac{3}{2}}(e)$, with nodal interpolant $I_\V w$ defined in \cref{eq:vertex_interpolant}.
\end{lemma}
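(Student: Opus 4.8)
The plan is to control the infimum in \cref{eq:interp_1/2_3/2} by a single, concrete competitor, namely $v^e := P^e_{I_e}\psi$ where $\psi := w - I_\V w$, and then to obtain the $H^{3/2}$-rate by interpolating two endpoint estimates in the regularity of $w$. The first point to record is that, on $e$, the nodal interpolant $I_\V w$ coincides with the affine function matching $w$ at the two endpoints of $e$, since each $\varphi_p$ is linear on $e$ with $\varphi_p(q)=\delta_{p,q}$. Hence $\psi$ vanishes at both endpoints and $\psi\in\HOe$ whenever $w\in\He$ (and $\psi\in\HOe\cap H^2(e)$ whenever $w\in H^2(e)$). This is exactly the regularity required to apply \cref{lem:approximation_properties_edge}, and it is the reason the interpolant is subtracted. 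I also note that, because $\{\tau_j^e\}_j$ is simultaneously $L^2(e)$-orthonormal and $\He$-orthogonal with the eigenrelation \cref{eq:def_edge_mode}, the map $P^e_{I_e}$ agrees on $\HOe$ with the $\He$-orthogonal projection onto $V^e_{I_e}$; in particular $Rw:=\psi-P^e_{I_e}\psi$ satisfies $\|Rw\|_{\He}\le\|\psi\|_{\He}$.

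I would then establish two endpoint bounds for the linear map $w\mapsto Rw$ viewed as an operator into $\HtrE$. For $w\in\He$, combining the $L^2$-estimate \cref{eq:approximation_err_L2_edge} applied to $Rw$ with the interpolation inequality \cref{eq:interpolation} and the norm reduction above gives
\[
\|Rw\|_{\HtrE}\;\le\; C\,\|Rw\|_{L^2(e)}^{1/2}\|Rw\|_{\He}^{1/2}\;\le\; \frac{C}{(\lambda^e_{I_e+1})^{1/4}}\,\|\psi\|_{\He}\;\le\; \frac{C}{(\lambda^e_{I_e+1})^{1/4}}\,\|w\|_{\He},
\]
where the last step uses boundedness of $I_\V$ on $\He$. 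For $w\in H^2(e)$ I would instead invoke \cref{eq:approximation_err_H1_edge}, observing that $\Delta_e I_\V w=0$ so that $\Delta_e\psi=\Delta_e w$; this yields $\|Rw\|_{\He}\le C(\lambda^e_{I_e+1})^{-1/2}\|\Delta_e w\|_{L^2(e)}$, and then \cref{eq:approximation_err_L2_edge} gives $\|Rw\|_{L^2(e)}\le C(\lambda^e_{I_e+1})^{-1}\|\Delta_e w\|_{L^2(e)}$, so that \cref{eq:interpolation} produces
\[
\|Rw\|_{\HtrE}\;\le\; \frac{C}{(\lambda^e_{I_e+1})^{3/4}}\,\|w\|_{H^2(e)}.
\]

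It then remains to interpolate between these two endpoint bounds. Since $w\mapsto Rw$ is linear and $H^{3/2}(e)=[\He,H^2(e)]_{1/2}$, the operator interpolation theorem with fixed target space $\HtrE$ yields a norm bounded by the geometric mean of the two endpoint operator norms, i.e.
\[
\|Rw\|_{\HtrE}\;\le\; \frac{C}{(\lambda^e_{I_e+1})^{1/2}}\,\|w\|_{H^{3/2}(e)}.
\]
Because $v^e=P^e_{I_e}\psi\in V^e_{I_e}$ and $\lambda^e_{I_e+1}\ge\lambda^e_{I_e}$, this single competitor already furnishes the asserted estimate \cref{eq:interp_1/2_3/2}. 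The main obstacle is precisely the $H^{3/2}$ regularity: it is neither of the two cases covered by \cref{lem:approximation_properties_edge}, so a genuine interpolation step is unavoidable. The cleanest route is to interpolate the error operator $R$ in the regularity of the data, rather than to attempt a direct spectral computation $\sum_j(\lambda_j^e)^{3/2}|(\psi,\tau_j^e)_e|^2$, which is delicate at the endpoint exponent $s=3/2$ for the Dirichlet Laplacian; the remaining care only concerns verifying that subtracting $I_\V w$ lands in $\HOe$, that $I_\V$ is bounded on both $\He$ and $H^2(e)$, and that $\Delta_e I_\V w=0$.
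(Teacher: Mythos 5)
Your proof is correct and follows essentially the same route as the paper: subtract the nodal interpolant to land in $\HOe$, apply \cref{lem:approximation_properties_edge} at the $\He$ and $H^2(e)$ regularity endpoints (using that $I_\V$ is bounded and $\Delta_e I_\V w = 0$ on $e$), and combine operator interpolation, via $H^{\myfrac{3}{2}}(e)=[\He,H^2(e)]_{\myfrac{1}{2}}$, with the interpolation inequality \cref{eq:interpolation}. The only difference is the order of the two interpolation steps: the paper interpolates the error operator into the fixed target $\He$ and then passes to $\HtrE$ via \cref{eq:interpolation}, whereas you apply \cref{eq:interpolation} at both regularity endpoints first and then interpolate the operator directly into $\HtrE$; both orderings use the same ingredients and yield the same rate $(\lambda^e_{I_e})^{-\myfrac{1}{2}}$.
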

\begin{proof}
By continuity of the embedding $\He\hookrightarrow C^0(\overline{e})$, we have that the nodal interpolation operator $I_\V$ is bounded, i.e., $\|I_\V w\|_{\He}\leq C\| w\|_{\He}$ for all $w\in \He$. 
By choosing $v^e=0$, we thus have the stability estimate
\begin{align}\label{eq:stability_adj_appro}
\inf_{v^e\in V_{I_e}^e}\| w -I_\V w -v^e\|_{\He} \leq C \|w\|_{\He}\quad\text{for all } w\in \He.
\end{align}
Employing \cref{lem:approximation_properties_edge} and using that $w-\I_\V w\in \HOe$, we also have that
\begin{align*}
\inf_{v^e\in V_{I_e}^e}\| w -I_\V w -v^e\|_{\He} 
\leq \frac{C}{\sqrt{\lambda_{I_e+1}^e}} \|w\|_{H^2(e)} \quad\text{for all } w\in H^2(e).
\end{align*}
Therefore, we get by interpolation \cite[Thm.~5.1, Thm.~9.6]{LionsMagenes1972} that
\begin{align}\label{eq:interp_1_3/2}
\inf_{v^e\in V_{I_e}^e}\| w -I_\V w -v^e\|_{\He} 
\leq \frac{C}{(\lambda_{I_e+1}^e)^{\myfrac{1}{4}}}\|w\|_{H^{\myfrac{3}{2}}(e)}\quad\text{for all } w\in H^{\myfrac{3}{2}}(e).
\end{align}
Employing \cref{eq:interp_1_3/2} and \cref{lem:approximation_properties_edge} and recalling that the best-approximation is realized via the projection $P^e_{I_e}(w-I_\V w)$, we also have that
\begin{align}\label{eq:interp_0_3/2}
\inf_{v^e\in V_{I_e}^e}\| w -I_\V w -v^e\|_{L^2(e)} 
&\leq \frac{1}{(\lambda^e_{I_e+1})^{\myfrac{3}{4}}} \|w\|_{H^{\myfrac{3}{2}}(e)}\quad\text{for all } w\in H^{\myfrac{3}{2}}(e).
\end{align}
Therefore, by the interpolation inequality \cref{eq:interpolation}, the relations \cref{eq:interp_1_3/2,eq:interp_0_3/2} yield \cref{eq:interp_1/2_3/2}, which concludes the proof.
\end{proof}
The next result shows that $\HDO$-functions can be approximated by bubble and interface functions. The proof of this statement is also the recipe to obtain quantitative error estimates in \Cref{sec:Interface_approximation_error}.
\begin{lemma}
    Let $\VB$ and $\VG$ be as in \cref{eq:bubble_space} and \cref{eq:interface_space_infinite}, respectively.
    Then it holds that $\HDO=\VB\oplus\VG$.
\end{lemma}
\begin{proof} 
Using a density argument, it is sufficient to show that any function $v\in \HDO\cap C^\infty_0(\Omega\cup\Gamma_R)$ can be approximated by functions in $\VB\oplus\VG$.
First note that $(v-\ExtG v_{\mid\Gamma})_{\mid\Oj}\in H^1_0(\Oj)$ for each $j=1,...,J$. Therefore, $v-\ExtG v_{\mid\Gamma}\in \VB$. 
It remains to show that $\ExtG v_{\mid\Gamma}$ can be approximated by functions in the interface space $\VG$.
By continuity of $\ExtG$, it is sufficient to approximate $v_{\mid\partial\Omega_j}$ in $H^{1/2}(\partial\Omega_j)$.
First, we subtract the nodal interpolant, and observe that
$(v-I_\V v)_{\mid e}\in H^1_0(e)\subset H^{1/2}_{00}(e)$ for any $e\in\E$ such that $e\subset\partial\Omega_j$.
Then, $\|v-I_\V v\|_{H^{1/2}(\partial\Oj)}$ can be localized to single edges $e\subset\partial\Oj$ as follows: define $w^e\in H^{1/2}(\partial\Oj)$ via $w^e=(v-I_\V v)_{\mid e}$ and $w^e=0$ on $\partial\Oj\setminus e$. 
We observe that $v-I_\V v=\sum_{e\subset\partial\Omega_j} w^e$ on $\partial\Omega_j$.
From \cite[Lemma~1.3.2.6]{Grisvard2011}, we have that $\|w^e\|_{H^{1/2}(\partial\Oj)}$ is equivalent to $\|w^e\|_{H^{1/2}_{00}(e)}$.
Since it holds $\|v-I_\V v\|_{H^{1/2}(\partial\Oj)}\leq \sum_{e\subset\partial\Oj}\|w^e\|_{H^{1/2}(\partial\Oj)}\leq C \sum_{e\subset\partial\Oj}\|w^e\|_{H^{1/2}_{00}(e)}$, the result follows from noting that $w^e_{\mid e}\in  V^e$.
\end{proof}

\section{Galerkin approximation} \label{sec:theory}

We start by observing that 
\begin{align}\label{eq:ortho}
\sesH(\vB, v_\Gamma) = \sesI(\vB,v_\Gamma) -(\kappa^2 \vB,v_\Gamma)- i (\omega\beta \vB,v_\Gamma)_{\Gamma_R} = 0,
\end{align}
for all $\vB\in \VB$ and $v_\Gamma\in \VG$,
where we used \cref{eq:orthogonality_j} and that $\vB=0$ on $\Gamma_R$.
Since $u=\uB+\uG\in \VB\oplus \VG$ with $\uB=u-\ExtG u_{\mid\Gamma}$ and $\uG=\ExtG u_{\mid\Gamma}$,
we thus obtain that
\begin{align}
\sesH(\uB,\vB) &= F(\vB) \quad \text{for all } \vB \in \VB \quad\text{and} \label{eq:bubble}\\
\sesH(u_\Gamma,v_\Gamma) &= F(v_\Gamma)\quad \text{for all } v_\Gamma \in \VG.\label{eq:interface}
\end{align}

Recalling that $\Vh = \VhB \oplus \VhG$ has been defined in \cref{def:Vapprox}, with $\VhB\subset\VB$ and $\VhG\subset\VG$ defined in \cref{eq:bubble_space_finite} and \cref{eq:interface_space_finite}, respectively, the Galerkin approximations of~\cref{eq:bubble}--\cref{eq:interface}
are split into two independent problems, namely a \textit{bubble approximation}: find $\uhB\in V_{B,S_B}$ such that
\begin{align}\label{eq:bubble_approx}
\sesH(\uhB,v_{B,S}) = F(v_{B,S}) \quad \text{for all } v_{B,S} \in V_{B,S_B};
\end{align}
and an \textit{interface approximation}: find $\uhG\in \VhG$ such that
\begin{align}\label{eq:interface_approx}
	\sesH(u_{S,\Gamma},v_{S,\Gamma}) = F(v_{S,\Gamma})\quad \text{for all } v_{S,\Gamma} \in \VhG.
\end{align}
The Galerkin problem \cref{eq:bubble_approx} is well-posed under \cref{ass:eig_not_zero}, while adjoint approximability is required to analyze the Galerkin problem \cref{eq:interface_approx}; cf. \cite{GrahamSauter2019}. In \cref{thm:estimate_sigma} below, we prove estimates for the adjoint approximability constant, ensuring that also \cref{eq:interface_approx} is well-posed as long as the number of edge modes is sufficiently large.
In the following subsections, we present an error analysis for the two independent approximation problems \cref{eq:bubble_approx,eq:interface_approx}.

\subsection{Error estimates for the bubble approximation}
For the statement of the approximation result, let us introduce the $L^2$-projection 
 $P^j_{I_j}:L^2(\Omega)\to V_{I_j}^j$, given by
\begin{equation} \label{eq:L2_projection}
P^j_{I_j}f=\sum_{i=1}^{I_j}(\kappa^2 f,b_i^j)_{\Oj} b_i^j.   
\end{equation}
The error analysis is then straight-forward, and the proof is given for convenience of the reader.
\begin{theorem} \label{thm:bubble_convergence_rate} 
Assume $f \in L^2(\Omega)$ and set $f_\kappa:=f/\kappa^2$.
For $\uB \in \VB$ and $\uhB \in \VhB$, defined in \cref{eq:bubble,eq:bubble_approx}, respectively, we have the estimates:
\begin{align}
\|\kappa(\uB-\uhB)\|_{L^2(\Omega)}^2 
&\leq   \sum_{j=1}^J \frac{1}{|{\lambda_*^j}-1|^2} \| \kappa (f_\kappa-P^j_{I_j}f_\kappa)\|_{L^2(\Oj)}^2, \label{eq:L2_bubble_error}
\\
\|\sqrt{a}\nabla(\uB-\uhB)\|_{L^2(\Omega)}^2  
&\leq \sum_{j=1}^J \frac{\lambda_{\#}^j}{|\lambda_{\#}^j-1|^2} \|\kappa (f_\kappa-P^j_{I_j}f_\kappa)\|_{L^2(\Oj)}^2, \label{eq:H1_bubble_error}
\end{align}
where $\lambda_*^j = {\rm argmin}\{|\lambda_{i}^{j}-1|, i\geq I_j+1\}$, 
and $\lambda_{\#}^j = {\rm argmax}\{\frac{\lambda_i^j}{|\lambda_i^j-1|^2}, i\geq I_j+1\}$.
\end{theorem}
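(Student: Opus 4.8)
The plan is to exploit the fact that the bubble problem \cref{eq:bubble} decouples completely across the subdomains and diagonalizes in the eigenbasis $\{b_i^j\}_i$, so that both the exact and the discrete solution can be written down in closed form. First I would note that every bubble lies in $\HOj$ and hence vanishes on $\Gamma_R$, so the impedance term in $\sesH$ drops out; moreover bubbles from distinct subdomains have disjoint supports, so testing \cref{eq:bubble} against $b_k^j$ reduces to the purely local identity $\sesI_j(\uB,b_k^j)-(\kappa^2\uB,b_k^j)_{\Oj}=(f,b_k^j)_{\Oj}$. Writing the local component as $\uB^j=\sum_i u_i^j b_i^j$ and using the $\sesI_j$-orthogonality together with the weighted $L^2$-orthonormality of the eigenfunctions from \cref{eq:bubbles}, the left-hand side collapses to $u_k^j(\lambda_k^j-1)$, while the right-hand side equals $(\kappa^2 f_\kappa,b_k^j)_{\Oj}$ since $\kappa^2 f_\kappa=f$. \cref{ass:eig_not_zero} then gives the closed form $u_k^j=(\kappa^2 f_\kappa,b_k^j)_{\Oj}/(\lambda_k^j-1)$.

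Next I would observe that the discrete problem \cref{eq:bubble_approx} is precisely the same diagonal system restricted to the first $I_j$ modes, so its solution $\uhB^j$ has exactly the same coefficients for $1\le i\le I_j$. Consequently the error is purely the tail of the expansion,
\[
\uB^j-\uhB^j=\sum_{i>I_j}\frac{(\kappa^2 f_\kappa,b_i^j)_{\Oj}}{\lambda_i^j-1}\,b_i^j .
\]
The two norm estimates then follow by evaluating the relevant quadratic forms on this tail. Because the subdomains are disjoint, both $\|\kappa(\cdot)\|_{L^2(\Omega)}^2$ and $\|\sqrt a\,\nabla(\cdot)\|_{L^2(\Omega)}^2$ split as sums over $j$. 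On each $\Oj$ the weighted $L^2$-orthonormality gives $\|\kappa(\uB^j-\uhB^j)\|_{L^2(\Oj)}^2=\sum_{i>I_j}|(\kappa^2 f_\kappa,b_i^j)_{\Oj}|^2/|\lambda_i^j-1|^2$, whereas the $\sesI_j$-orthogonality together with $\sesI_j(b_i^j,b_i^j)=\lambda_i^j$ yields the analogous expression carrying an extra factor $\lambda_i^j$ in each numerator.

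To conclude I would bound the eigenvalue-dependent factors by their extremal values over the tail, replacing $1/|\lambda_i^j-1|^2$ by $1/|\lambda_*^j-1|^2$ and $\lambda_i^j/|\lambda_i^j-1|^2$ by $\lambda_\#^j/|\lambda_\#^j-1|^2$, and then recognize the remaining sum $\sum_{i>I_j}|(\kappa^2 f_\kappa,b_i^j)_{\Oj}|^2$ as exactly $\|\kappa(f_\kappa-P^j_{I_j}f_\kappa)\|_{L^2(\Oj)}^2$ by the definition \cref{eq:L2_projection}. Summing over $j$ gives \cref{eq:L2_bubble_error,eq:H1_bubble_error}. The computation is essentially routine, as the authors indicate; the only points requiring a little care are justifying that the Galerkin solution truncates the exact series exactly (which rests on the full diagonalization via the orthogonality of the bubbles, rather than on any C\'ea-type quasi-optimality argument) and noting that $f_\kappa\in L^2(\Oj)$ may be expanded in the bubble basis even though it need not lie in $\HOj$, which is legitimate because $\{b_i^j\}_i$ is a weighted-$L^2$ orthonormal basis of $L^2(\Oj)$, precisely what makes $P^j_{I_j}f_\kappa$ well defined.
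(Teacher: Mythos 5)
Your proposal is correct and follows essentially the same route as the paper's proof: diagonalize both the exact bubble problem \cref{eq:bubble} and its Galerkin approximation \cref{eq:bubble_approx} in the eigenbasis from \cref{eq:bubbles}, observe that the discrete solution exactly truncates the series so the error is the spectral tail, and bound the tail using the weighted $L^2$- and $\sesI_j$-orthogonality together with the extremal eigenvalue factors $\lambda_*^j$ and $\lambda_\#^j$. The points you flag as needing care (exact truncation via diagonalization rather than C\'ea, and expanding $f_\kappa\in L^2(\Oj)$ in the weighted-$L^2$ basis to identify the tail with $\|\kappa(f_\kappa-P^j_{I_j}f_\kappa)\|_{L^2(\Oj)}$) are precisely the ingredients the paper uses, so nothing is missing.
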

\begin{proof}
Since $\{b_i^j\}_i$ form an orthonormal basis for $L^2(\Oj)$ with the weighted inner product $(\kappa^2 u,v)_{\Oj}$, we may assume the expansion
$$
\uB = \sum_{j=1}^J \sum_{i=1}^{\infty} u_{B,i}^j b_i^j,\qquad \text{with } { u_{B,i}^j = (\kappa^2 \uB,b_i^j)_{\Oj}}.
$$
By testing \cref{eq:bubble} with $v_{B}= b_i^j$, for $i\in\NN$, and observing that $\sesH(\uB,b_i^j)=\sesI_j (\uB,b_i^j) - (\kappa^2 \uB, b_i^j)_{\Oj}$, we infer from~\cref{eq:bubbles} that
$u_{B,i}^j = F(b_i^j)/(\lambda_i^j-1)$.
Consequently, we obtain that
$$
\uhB = \sum_{j=1}^J \sum_{i=1}^{I_j} u_{B,S,i}^j b_i^j,\quad \text{with }  u_{B,S,i}^j = \myfrac{F(b_i^j)}{(\lambda_i^j-1)}.
$$
Therefore, $u_{B,S,i}^j=u_{B,i}^j$ for $1\leq i\leq I_j$ and $1\leq j\leq J$. 
Since $b_i^j=0$ on $\partial\Omega$, we further have that $F(b_i^j) = (f, b_i^j)_\Omega$.
Hence, we conclude that
\begin{align*}
\|\kappa(\uB-\uhB)\|_{L^2(\Omega)}^2 
=\sum_{j=1}^J \sum_{i=I_j+1}^{\infty} \frac{1}{|\lambda_i^j-1|^2}|(f,b_i^j)_{\Oj}|^2,
\end{align*}
which, by \cref{eq:L2_projection} and the definition of $\lambda_*^j$, implies~\cref{eq:L2_bubble_error}.
Estimate \cref{eq:H1_bubble_error} follows similarly from $\sesI_j (b_i^j,b_i^j)=\lambda_i^j$ and
\begin{align*}
\|\sqrt{a}\nabla(\uB-\uhB)\|_{L^2(\Omega)}^2 
= \sum_{j=1}^J \sum_{i=I_j+1}^\infty \frac{\lambda_i^j}{|\lambda_i^j-1|^2}|(f,b_i^j)_{\Oj}|^2.
\end{align*}
\end{proof}
\begin{remark}\label{rem:local_bubble}
%
We note that, if $f_{\mid \Oj}=0$, then $u_{ B\mid\Oj}=0$, and the approximation error vanishes on $\Oj$. Therefore, no bubble basis functions have to be computed on the corresponding domain $\Oj$. If, on the other hand, $f$ does not vanish on $\Oj$, the projection error in the estimates in \Cref{thm:bubble_convergence_rate} might be used to adaptively choose the number of required bubble functions that guarantee a certain error bound; cf. the discussion before \cite[Prop.~3.6]{Hetmaniuk2014} for the elliptic case.
\end{remark}

\begin{remark}
From the proof of \Cref{thm:bubble_convergence_rate} we see that, per subdomain, the bubble approximation is a projection of the bubble part of the solution to Helmholtz equation with respect to the norms in \cref{eq:L2_bubble_error}, \cref{eq:H1_bubble_error}. Since a projection constitutes the best-approximation and \Cref{ass:eig_not_zero} can be interpreted as a resolution condition, the bubble approximation does not suffer from the pollution effect in the sense of \cite{BabuskaSauter1997}.
\end{remark}

\subsection{Well-posedness of the interface Galerkin problem}\label{sec:well_posedness_interface}
Well-posedness of the Galerkin problem and quasi-best approximation results under an adjoint approximability condition follow as in \cite[Section 4]{GrahamSauter2019}, which has also been used in \cite[Section~4.2]{ChenHouWang2023} or \cite[Section 3.2]{MaAlberScheichl2023} in a multiscale context. To state the result let us introduce $T^*\chi=z$, mapping $\chi \in L^2(\Omega)$ to the solution $z\in  \HDO$ of the dual problem
\begin{align}\label{eq:adjoint}
\sesH(v,z) = (v,\chi)\quad\text{for all } v\in \HDO,
\end{align}
which is well-defined due to \cref{thm:well-posedness_Helmholtz}.
Moreover, we denote with $T_\Gamma^*\chi=\ExtG(z_{\mid\Gamma})\in\VG$ the interface component of $T^*\chi$. Using \cref{eq:ortho}, we observe that $T_\Gamma^*\chi$ is a solution to \cref{eq:adjoint} for test functions $v\in\VG$.
The approximation properties of the interface space $\VhG$ are measured by the adjoint approximability constant
\begin{align}\label{eq:adjoint_approx_constant}
\sigma^* = \sup_{\varphi\in L^2(\Omega)\setminus\{0\}} \frac{\inf_{\vG\in \VhG} \|T_\Gamma^*( \kappa^2\varphi)-\vG\|_\sesP}{\|\kappa\varphi\|}.
\end{align}
With these preparations, we can state the abstract result \cite[Thm.~4.2]{GrahamSauter2019} in our setting.
\begin{lemma}\label{lem:well-posedness_interface_approx}
Suppose that $C_{\sesH}\sigma^*\leq \myfrac{1}{2}$, where $C_{\sesH}$ is defined in \cref{eq:boundedness_sesquilinearH} and $\sigma^*$ is the adjoint approximability constant. Then, the Galerkin problem~\cref{eq:interface_approx} has a unique solution, and the following estimates hold:
\begin{align}
\|u_\Gamma-\uhG\|_\sesP &\leq 2 C_{\sesH} \inf_{\vG\in \VhG}\|u_\Gamma - \vG\|_\sesP, \label{eq:sesP_estimate_interface}\\
\|\kappa (u_\Gamma-\uhG)\|&\leq 2 C_{\sesH}^2 \sigma^* \inf_{\vG\in \VhG}\|u_\Gamma - \vG\|_\sesP.\label{eq:L2_estimate_interface}
\end{align}
\end{lemma}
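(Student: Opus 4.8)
The plan is to follow the abstract Schatz-type duality argument of \cite[Thm.~4.2]{GrahamSauter2019}, transcribed to the sesquilinear form $\sesH$ and the norm $\|\cdot\|_\sesP$. The starting point is a G{\aa}rding-type identity obtained by comparing $\sesH$ with $\sesP$: from \cref{eq:def_C,eq:def_sesP} one has $\sesH(u,v)=\sesP(u,v)-2(\kappa^2 u,v)_\Omega-i(\omega\beta u,v)_{\Gamma_R}$, and since $\sesI(u,u)$ and $(\kappa^2u,u)_\Omega$ are real while $i(\omega\beta u,u)_{\Gamma_R}$ is purely imaginary, this gives $\operatorname{Re}\sesH(u,u)=\|u\|_\sesP^2-2\|\kappa u\|^2$ for every $u\in\HDO$. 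The two further ingredients are boundedness of $\sesH$ with constant $C_{\sesH}$ from \cref{eq:boundedness_sesquilinearH}, and Galerkin orthogonality $\sesH(u_\Gamma-\uhG,\vG)=0$ for all $\vG\in\VhG$, obtained by restricting \cref{eq:interface} to $\VhG\subset\VG$ and subtracting \cref{eq:interface_approx}.

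First I would establish uniqueness, which by finite-dimensionality of $\VhG$ also yields existence. Let $w\in\VhG$ satisfy $\sesH(w,\vG)=0$ for all $\vG\in\VhG$. To control $\|\kappa w\|$ I invoke the dual solution $z=T^*(\kappa^2 w)\in\VG$, characterized by $\sesH(v,z)=(v,\kappa^2 w)_\Omega$ for all $v\in\VG$; testing with $v=w\in\VhG\subset\VG$ gives $\|\kappa w\|^2=\sesH(w,z)$. Since $\sesH(w,z_\Gamma)=0$ for every $z_\Gamma\in\VhG$, boundedness and the definition \cref{eq:adjoint_approx_constant} of $\sigma^*$ applied with $\varphi=w$ yield $\|\kappa w\|^2\le C_{\sesH}\|w\|_\sesP\,\sigma^*\|\kappa w\|$, hence $\|\kappa w\|\le C_{\sesH}\sigma^*\|w\|_\sesP$. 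Choosing $\vG=w$ shows $\sesH(w,w)=0$, so the G{\aa}rding identity forces $\|w\|_\sesP^2=2\|\kappa w\|^2\le 2(C_{\sesH}\sigma^*)^2\|w\|_\sesP^2\le\tfrac12\|w\|_\sesP^2$ under the hypothesis $C_{\sesH}\sigma^*\le\tfrac12$; thus $w=0$.

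The quasi-best approximation bound \cref{eq:sesP_estimate_interface} follows by repeating these two estimates for the genuine error $e:=u_\Gamma-\uhG\in\VG$. The duality step, now with $\varphi=e$ and $z=T^*(\kappa^2 e)$, delivers $\|\kappa e\|\le C_{\sesH}\sigma^*\|e\|_\sesP$ exactly as above. For the energy part I combine G{\aa}rding with Galerkin orthogonality: for arbitrary $\vG\in\VhG$, $\|e\|_\sesP^2=\operatorname{Re}\sesH(e,e)+2\|\kappa e\|^2=\operatorname{Re}\sesH(e,u_\Gamma-\vG)+2\|\kappa e\|^2$. Bounding the first term by $C_{\sesH}\|e\|_\sesP\|u_\Gamma-\vG\|_\sesP$ and the second by $2(C_{\sesH}\sigma^*)^2\|e\|_\sesP^2\le\tfrac12\|e\|_\sesP^2$ lets me absorb the quadratic term on the left; dividing by $\|e\|_\sesP$ and taking the infimum over $\vG$ produces \cref{eq:sesP_estimate_interface}. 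Finally, \cref{eq:L2_estimate_interface} is immediate by chaining $\|\kappa e\|\le C_{\sesH}\sigma^*\|e\|_\sesP$ with \cref{eq:sesP_estimate_interface}.

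The main obstacle is the duality step: one must recognize that the correct test function for estimating $\|\kappa e\|$ is precisely the adjoint solution $z$ of \cref{eq:adjoint} with data $\kappa^2 e$, and that its best approximation error in $\VhG$ is measured by $\sigma^*$. This is where the smallness of the adjoint approximability constant is decisive, since it is what tames the indefiniteness of $\sesH$ and permits both the uniqueness argument and the absorption of the $\|\kappa e\|^2$ term; the remaining steps are routine bookkeeping with the G{\aa}rding identity and the boundedness constant $C_{\sesH}$.
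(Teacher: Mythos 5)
Your proposal is correct, and it is essentially the paper's approach: the paper proves this lemma simply by invoking the abstract result \cite[Thm.~4.2]{GrahamSauter2019}, and your argument is a faithful transcription of that Schatz-type duality proof into the present setting (G{\aa}rding identity $\operatorname{Re}\sesH(u,u)=\|u\|_\sesP^2-2\|\kappa u\|^2$, Galerkin orthogonality, and the duality step with $z=T^*(\kappa^2 e)$ measured by $\sigma^*$). All steps check out, including the use of $e=u_\Gamma-\uhG\in\VG$ as a valid test function in the adjoint problem \cref{eq:adjoint} and the absorption $2(C_{\sesH}\sigma^*)^2\le\tfrac12$, which reproduces exactly the constants $2C_{\sesH}$ and $2C_{\sesH}^2\sigma^*$ in \cref{eq:sesP_estimate_interface,eq:L2_estimate_interface}.
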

Note that the quasi-optimality constant $2C_{\sesH}$ in \cref{eq:sesP_estimate_interface} is independent of $\omega$ for large $\omega$; see \cref{thm:well-posedness_Helmholtz}. However, the result applies only if $\sigma^*$ is sufficiently small, which requires sufficiently many edge modes, see \Cref{sec:4.4}.
\begin{proof} The proof follows closely \cite{GrahamSauter2019} and is included for convenience of the reader.
Denote $e_\Gamma=u_\Gamma-\uhG$, and let $\psi=T_\Gamma^*(\kappa^2 e_\Gamma)$.
Then, for $\psi_{\Gamma,S}\in \VhG$ being the best-approximation of $\psi$ in the $\sesP$-norm, Galerkin orthogonality yields
\begin{align*}
\|\kappa e_\Gamma\|^2 = \sesH(e_\Gamma,\psi) = \sesH(e_\Gamma,\psi-\psi_{\Gamma,S})\leq C_{\sesH} \sigma^* \|e_\Gamma\|_\sesP \|\kappa e_\Gamma\|,
\end{align*}
where we used the adjoint approximability \cref{eq:adjoint_approx_constant} as follows:
\begin{align*}
\|\psi-\psi_{\Gamma,S}\|_\sesP 
= \inf_{\vG\in \VhG}\|T_\Gamma^*(\kappa^2e_\Gamma)-\vG\|_\sesP
\leq\sigma^* \|\kappa e_\Gamma\|. 
\end{align*}
Using Galerkin orthogonality once again, we obtain for arbitrary $\vG\in \VhG$ that
\begin{align*}
\|e_\Gamma\|_\sesP^2 &= \Re\{\sesH(e_\Gamma,e_\Gamma)\}+ 2 \|\kappa^2 e_\Gamma\|^2
= \Re\{\sesH(e_\Gamma,u_\Gamma-\vG)\}+ 2 \|\kappa^2 e_\Gamma\|^2\\
&\leq C_{\sesH} \|e_\Gamma\|_\sesP \|u_\Gamma - \vG\|_\sesP + 2 (C_{\sesH}\sigma^*)^2 \| e_\Gamma\|_\sesP^2.
\end{align*}
 The abstract error estimates \cref{eq:sesP_estimate_interface} and \cref{eq:L2_estimate_interface} then follow using the assumption $\sigma^* \leq \myfrac{1}{(2 C_{\sesH})}$.
Similarly, we can prove the existence of the Galerkin projection. Assume
$$
\sesH(\uhG,\vG)=0\quad \text{for all } \vG\in \VhG.
$$
We have to show that $\uhG=0$. As before, but using the previous identity and choosing $\vG$ as the best-approximation in the $\sesP$-norm of $T_\Gamma^*(\kappa^2 \uhG)$, we obtain
\begin{align*}
\|\kappa \uhG\|^2 =
\sesH(\uhG,T_\Gamma^*(\kappa^2 \uhG)-\vG)
\leq C_{\sesH} \sigma^* \|\uhG\|_\sesP \|\kappa \uhG\|.
\end{align*}
Therefore, we have that
\begin{align*}
\|\uhG\|_\sesP^2 &= \Re\{\sesH(\uhG,\uhG)\}+ 2 \|\kappa^2 \uhG\|^2 = 2 \|\kappa^2 \uhG\|^2
\leq 2 (C_{\sesH}\sigma^*)^2 \|\uhG\|_\sesP^2,
\end{align*}
which, by assumption $C_{\sesH}\sigma^*\leq \myfrac{1}{2}$, yields that $\uhG=0$. 
\end{proof}

\subsection{Estimates of the best-approximation error in \texorpdfstring{$\VhG$}{VGS}}\label{sec:Interface_approximation_error}
Quantitative estimates for the interface approximation error follow from localizing the error to single edges and applying the estimates proven in \cref{lem:approximation_properties_edge}.
\begin{theorem}\label{thm:estimate_sesP_norm}
Assume that the solution $u$ to \cref{eq:HelmholtzVariational} satisfies $u \in H^2(e)$ for all $e\in\E$,
and denote by $\uhG$ the solution to \cref{eq:interface_approx}. Then, 
\begin{align*}
\|u_\Gamma-\uhG\|_\sesP \leq \sum_{e\in\E} \frac{C}{(\lambda_{I_e+1}^e)^{\myfrac{3}{4}}}\|\Delta_e u-P^e_{I_e}\Delta_e u\|_{L^2(e)}
\end{align*}
for a constant $C>0$, bounded by $O(\|\kappa\|_\infty)$, independent of $u$ and $\uhG$. 
\end{theorem}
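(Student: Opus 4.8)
The plan is to reduce the Galerkin error to a best-approximation error, to realise that best approximation by an explicit interpolant whose error is concentrated on the edges, and finally to localise the edge error using \cref{lem:approximation_properties_edge}. First I would invoke the quasi-optimality estimate \cref{eq:sesP_estimate_interface} of \cref{lem:well-posedness_interface_approx}: assuming the well-posedness condition $C_{\sesH}\sigma^*\leq\myfrac12$, which is ensured for sufficiently many edge modes by \cref{thm:estimate_sigma}, one has $\|u_\Gamma-\uhG\|_\sesP\leq 2C_{\sesH}\inf_{\vG\in\VhG}\|u_\Gamma-\vG\|_\sesP$, so that only the best-approximation error must be controlled, with the factor $2C_{\sesH}$ absorbed into $C$. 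Next I would identify $u_\Gamma=\ExtG(u_{\mid\Gamma})$: writing $u=\uB+u_\Gamma$ with $\uB\in\VB$ vanishing on $\Gamma$ shows that $u_\Gamma$ and $u$ share the same interface trace, and since every element of $\VG$ is the $\sesI$-harmonic extension of its own trace (cf.~\cref{eq:ortho}), $u_\Gamma$ is precisely the extension of $u_{\mid\Gamma}$.

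As a near-best approximant I would take $\vG:=\ExtG I_\V u+\sum_{e\in\E}\ExtE\,P^e_{I_e}(u-I_\V u)\in\VhG$, with $I_\V$ the nodal interpolant \cref{eq:vertex_interpolant}. By linearity of the extension operators, $u_\Gamma-\vG=\ExtG\eta$, where on each edge the interface remainder satisfies $\eta_{\mid e}=(u-I_\V u)_{\mid e}-P^e_{I_e}(u-I_\V u)_{\mid e}$. Because $u$ is continuous at the vertices and $I_\V u$ matches it there, while $P^e_{I_e}$ maps into functions vanishing at the edge endpoints, $\eta$ vanishes at all vertices, and $\eta_{\mid e}\in\HOe\cap H^2(e)$, using $u\in H^2(e)$ and that the vertex functions $\varphi_p$ are affine, hence edge-harmonic. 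In particular $\eta_{\mid e}\in\HtrELM$, so it may be extended by zero to $\HtrBOj$ for each adjacent $\Oj$.

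To pass from $\ExtG\eta$ to the edges, I would apply the extension bound \cref{lem:extension_bound} subdomainwise: since $\|\cdot\|_\sesP^2$ is additive over the $\Oj$ and $(\ExtG\eta)_{\mid\Oj}=\ExtLoc{j}(\eta_{\mid\partial\Oj})$, the quantity $\|\ExtG\eta\|_\sesP^2$ is controlled by the squared $\sesP$-norm of any $H^1$-extension of $\eta_{\mid\partial\Oj}$. Lifting each zero-extended edge piece separately and using that each subdomain carries boundedly many edges, each shared by two subdomains, I obtain $\|\ExtG\eta\|_\sesP^2\leq C\sum_{e\in\E}\|\eta_{\mid e}\|_{\HtrELM}^2$, the lift constants and the $\kappa$-dependent weights being absorbed into $C$. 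Finally, the interpolation inequality for the Lions--Magenes space, the analogue of \cref{eq:interpolation} for $\HtrELM=[L^2(e),\HOe]_{\myfrac12}$, gives $\|\eta_{\mid e}\|_{\HtrELM}\leq C\|\eta_{\mid e}\|_{L^2(e)}^{\myfrac12}\|\eta_{\mid e}\|_{\He}^{\myfrac12}$; inserting \cref{eq:approximation_err_L2_edge,eq:approximation_err_H1_edge} with $w=u-I_\V u$ and $\Delta_e(u-I_\V u)=\Delta_e u$ yields $\|\eta_{\mid e}\|_{\HtrELM}\leq C(\lambda^e_{I_e+1})^{-\myfrac34}\|\Delta_e u-P^e_{I_e}\Delta_e u\|_{L^2(e)}$. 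Summing over edges and using $\sqrt{\sum_e a_e^2}\leq\sum_e a_e$ then produces the asserted bound.

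The main obstacle I anticipate is the localisation step together with the sharp exponent $\myfrac34$. The reduction to best approximation and the identification of $u_\Gamma$ are routine, but bounding the global $\sesP$-norm of the harmonic extension by a sum of edge $\HtrELM$-norms requires the zero-extendability of each edge remainder, which is exactly why $u$ must be continuous at the vertices and lie in $H^2(e)$; moreover, the correct $\myfrac34$ power emerges only from combining the two estimates of \cref{lem:approximation_properties_edge} through the Lions--Magenes interpolation inequality, rather than from the cruder $\HtrE$ bound of \cref{lem:approximation_properties_edge2}.
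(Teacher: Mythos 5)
Your proposal is correct and follows essentially the same route as the paper's proof: reduction to best approximation via \cref{lem:well-posedness_interface_approx}, the same interpolant $\vG = \ExtG I_\V u + \sum_{e\in\E}\ExtE P^e_{I_e}(u-I_\V u)$, localization to edges through the extension bound \cref{lem:extension_bound}, and the $\myfrac{3}{4}$ exponent obtained by combining both estimates of \cref{lem:approximation_properties_edge} with the interpolation inequality. Your only deviations are refinements rather than differences: you work explicitly with the $\HtrELM$-norm (where the paper writes $\HtrE$) to justify the zero extension, and you spell out $\Delta_e(u-I_\V u)=\Delta_e u$ via the edge-harmonicity of the vertex functions, both of which the paper leaves implicit.
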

\begin{proof}
According to \cref{lem:well-posedness_interface_approx}, it suffices to estimate the best-approximation error $\inf_{\vG}\|u_\Gamma-\vG\|_\sesP$.
By continuity of $u$, the nodal interpolant $I_\V u$ of $u$ is well-defined and, by construction, we obtain that $(u-I_\V u)_{\mid e} \in \HOe$.
These observations motivate the choice
\begin{align}\label{eq:interpolant}
\vG = \ExtG I_\V u + \sum_{e\in\E} \ExtE P^e_{I_e}(u-I_\V u),
\end{align}
with $P^e_{I_e}$ introduced in \cref{eq:l2_projection_edge}.
We then have that $u_\Gamma(p)-\vG(p)=0$ for all $p\in \V$. 
Moreover, in view of \cref{sec:function_spaces},
$(\uG-\vG)_{\mid e}\in  \HOe\subset \HtrELM$.
Therefore, the error can be localized to single edges as follows
\begin{align}\label{eq:localization}
{\uG}_{\mid\Oj}  - {\vG}_{\mid\Oj} = \ExtLoc{j}( u_{\mid\partial\Oj} - {\vG}_{\mid\partial\Oj} ) = \sum_{e\in\E, e\subset\partial\Oj} (\ExtE (u_{\mid e} - {\vG}_{\mid e}))_{\mid\Oj}.
\end{align}
From the latter identity and from the definition \cref{eq:def_sesP}, we obtain the following estimate
\begin{align}
\|\uG - \vG\|_\sesP^2 
&\leq C \sum_j \sum_{e\in\E,e\subset\partial\Oj} \|\ExtE (u_{\mid e}- {\vG}_{\mid e})_{\mid\Oj}\|_{\sesP}^2 \notag\\
&\leq (a_{\max}+\|\kappa\|_\infty^2)C \max_{j}(1+\myfrac{1}{\beta^j})^2\sum_{e\in\E} \|u-\vG\|_{\HtrELM}^2, \label{eq:localization2}
\end{align}
where $C$ depends on the number of edges of a subdomain $\Oj$ and where we used \cref{eq:extension_bound} and equivalence of the $\sesP$-norm to the $\Hj$-norm.
The interpolation estimate \cref{eq:interpolation,lem:approximation_properties_edge} then yield the assertion.
\end{proof}
%
%
If the solution has more regularity, the convergence improves, as long as the derivatives of the edge modes do not grow too quickly, which is true, e.g., if $e$ is a line segment.
\begin{lemma}\label{lem:improved_rate_interface}
 In addition to the assumptions of \cref{thm:estimate_sesP_norm}, suppose that $u\in H^3(e)$ for all $e\in\E$. Moreover, suppose that there is a constant $C>0$ such that $|\partial_e\tau_i^e(p)|\leq C \sqrt{\lambda_i^e}$ for all $i\in\NN$ and $p\in\partial e$.
 Then there is a constant $C>0$ such that
\begin{align*}
\|u_\Gamma-\uhG\|_\sesP \leq C \sum_{e\in\E} \frac{1}{\lambda_{I_e+1}^e}\| u\|_{H^3(e)}.
\end{align*}
\end{lemma}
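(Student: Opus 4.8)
The plan is to mirror the proof of \cref{thm:estimate_sesP_norm}, but to exploit the extra regularity $u\in H^3(e)$ to gain an additional factor of $\lambda_{I_e+1}^e$ in the denominator. As in that proof, by \cref{lem:well-posedness_interface_approx} it suffices to bound the best-approximation error $\inf_{\vG}\|u_\Gamma-\vG\|_\sesP$, and I would again employ the interpolant $\vG$ defined in \cref{eq:interpolant}. The localization identity \cref{eq:localization} and the bound \cref{eq:localization2} carry over verbatim, reducing everything to estimating $\|u-\vG\|_{\HtrE}$ edge by edge, where on each edge $(u-\vG)_{\mid e} = (u-I_\V u) - P^e_{I_e}(u-I_\V u)$.

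First I would set $w := (u-I_\V u)_{\mid e}\in \HOe$ and apply the interpolation inequality \cref{eq:interpolation}, so that $\|w-P^e_{I_e}w\|_{\HtrE}\leq C\|w-P^e_{I_e}w\|_{L^2(e)}^{\myfrac12}\|w-P^e_{I_e}w\|_{\He}^{\myfrac12}$. The task then splits into sharpening the $L^2(e)$ and $\He$ bounds of \cref{lem:approximation_properties_edge} by one further application of integration by parts, which is where the hypothesis $u\in H^3(e)$ and the derivative bound $|\partial_e\tau_i^e(p)|\leq C\sqrt{\lambda_i^e}$ enter. Concretely, I would iterate the identity $(w,\tau_j^e)_e = -\myfrac{1}{\lambda_j^e}(\Delta_e w,\tau_j^e)_e$: integrating by parts once more on $(\Delta_e w,\tau_j^e)_e$ produces the boundary terms $\partial_e\tau_j^e(p)$, and the assumed growth bound ensures these contribute a factor controlled by $\sqrt{\lambda_j^e}$. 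Summing the resulting series over $j\geq I_e+1$ and comparing against the tail of $\|w\|_{H^3(e)}^2$, using the asymptotics \cref{eq:edge_eigs_behavior}, should give $\|w-P^e_{I_e}w\|_{L^2(e)}\leq \myfrac{C}{(\lambda_{I_e+1}^e)^{3/2}}\|u\|_{H^3(e)}$ and $\|w-P^e_{I_e}w\|_{\He}\leq \myfrac{C}{\lambda_{I_e+1}^e}\|u\|_{H^3(e)}$.

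Inserting these into the interpolation inequality yields $\|u-\vG\|_{\HtrE}\leq \myfrac{C}{(\lambda_{I_e+1}^e)^{5/4}}\|u\|_{H^3(e)}$ per edge; combined with \cref{eq:localization2} and summed over $e\in\E$ this gives the claimed rate $\myfrac{1}{\lambda_{I_e+1}^e}$ (indeed somewhat better, so the stated bound follows a fortiori). The main obstacle I anticipate is the careful handling of the boundary terms arising in the repeated integration by parts: unlike in \cref{lem:approximation_properties_edge}, the function $\Delta_e w$ need not lie in $\HOe$, so the integration by parts picks up contributions at the endpoints $p\in\partial e$ that must be controlled precisely through the hypothesis $|\partial_e\tau_i^e(p)|\leq C\sqrt{\lambda_i^e}$. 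Verifying that these boundary contributions are summable and indeed governed by the $H^3(e)$-norm, rather than destroying the gain, is the delicate point; the line-segment case is the clean setting where $\partial_e\tau_i^e$ is a rescaled trigonometric function whose endpoint values grow exactly like $\sqrt{\lambda_i^e}$, confirming the hypothesis is natural.
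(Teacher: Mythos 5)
Your core computation is the same as the paper's: both arguments hinge on the double integration by parts
\begin{align*}
(\Delta_e u,\tau_j^e)_e=-\frac{1}{\lambda_j^e}(\Delta_e u,\Delta_e \tau_j^e)_e
= \frac{1}{\lambda_j^e}\left((\partial_e^3 u, \partial_e \tau_j^e)_e + \left[\Delta_e u\,\partial_e\tau_j^e\right]_{q}^{p}\right),
\end{align*}
valid since $-\Delta_e\tau_j^e=\lambda_j^e\tau_j^e$ by elliptic regularity, followed by Cauchy--Schwarz, the embedding $\He\hookrightarrow C^0(\overline e)$, the asymptotics \cref{eq:edge_eigs_behavior}, and the assumed endpoint bound $|\partial_e\tau_j^e(p)|\leq C\sqrt{\lambda_j^e}$. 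The only structural difference is that the paper inserts the resulting tail bound for $\|\Delta_e u-P^e_{I_e}\Delta_e u\|_{L^2(e)}$ directly into the ready-made estimate of \cref{thm:estimate_sesP_norm}, whereas you re-run the localization and the $L^2(e)$--$\He$ interpolation; that is legitimate, just longer.

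However, your claimed intermediate rates are wrong and would fail. The identity above gives $|(\Delta_e u,\tau_j^e)_e|\leq C(\lambda_j^e)^{-\myfrac{1}{2}}\|u\|_{H^3(e)}$, hence $|(w,\tau_j^e)_e|\leq C(\lambda_j^e)^{-\myfrac{3}{2}}\|u\|_{H^3(e)}$ for $w=(u-I_\V u)_{\mid e}$, and this order is sharp, because the endpoint term $[\Delta_e u\,\partial_e\tau_j^e]_q^p$ genuinely has size $\sqrt{\lambda_j^e}$ whenever $\Delta_e u$ does not vanish at the endpoints. Summing the tails with $\lambda_j^e\sim j^2$ then yields
\begin{align*}
\|w-P^e_{I_e}w\|_{L^2(e)}\leq \frac{C}{(\lambda^e_{I_e+1})^{\myfrac{5}{4}}}\|u\|_{H^3(e)},
\qquad
\|w-P^e_{I_e}w\|_{\He}\leq \frac{C}{(\lambda^e_{I_e+1})^{\myfrac{3}{4}}}\|u\|_{H^3(e)},
\end{align*}
not the exponents $\myfrac{3}{2}$ and $1$ you claim: coefficients decaying like $j^{-3}$ cannot produce an $\ell^2$-tail smaller than $\bigl(\sum_{j>I_e}j^{-6}\bigr)^{\myfrac{1}{2}}\sim I_e^{-\myfrac{5}{2}}$, whereas your $L^2$ claim would require $I_e^{-3}$. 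The likely source of the error is conflating the pointwise coefficient decay $|(\Delta_e u,\tau_j^e)_e|\lesssim(\lambda_j^e)^{-\myfrac{1}{2}}$ with a bound on the tail norm $\|\Delta_e u-P^e_{I_e}\Delta_e u\|_{L^2(e)}$, which after summation is only $O((\lambda^e_{I_e+1})^{-\myfrac{1}{4}})$. The miscalculation happens to be benign for the lemma: interpolating the corrected bounds via \cref{eq:interpolation} gives, per edge, $(\lambda^e_{I_e+1})^{-\myfrac{5}{8}}\cdot(\lambda^e_{I_e+1})^{-\myfrac{3}{8}}=(\lambda^e_{I_e+1})^{-1}$, which is exactly the stated rate. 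But your conclusion that the argument delivers the strictly better rate $(\lambda^e_{I_e+1})^{-\myfrac{5}{4}}$, so that the lemma follows \emph{a fortiori}, is false; with the exponents corrected, your approach proves the lemma with nothing to spare.
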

\begin{proof}
In view of \cref{thm:estimate_sesP_norm} it suffices to bound 
\begin{align*}
\|\Delta_e u-P^e_{I_e}\Delta_e u\|_{L^2(e)} = \left(\sum_{i>I_e}|(\Delta_e u,\tau_i^e)_e|^2\right)^{1/2}.
\end{align*}
Since $-\Delta_e \tau_i^e=\lambda_i^e\tau_i^e$ on $e$ by elliptic regularity, integration by parts yields that
\begin{align*}
(\Delta_e u,\tau_i^e)_e =- \frac{1}{\lambda_i^e} (\Delta_e u,\Delta_e\tau_i^e)_e
= \frac{1}{\lambda_i^e} \left((\partial_e^3 u,\partial_e\tau_i^e)_e+ \left[\Delta_e u\partial_e\tau_i^e \right]_{q}^p\right),
\end{align*}
where $p,q$ denote the endpoints of $e$.
Therefore, using the Cauchy-Schwarz inequality, the continuity of the embedding $\He\hookrightarrow C^0(\overline{e})$ \cite[p.~97]{Adams75}, we have that
\begin{align*}
|(\Delta_e u,\tau_i^e)_e| \leq \frac{C}{\sqrt{\lambda_i^e}} \|u\|_{H^3(e)}.
\end{align*}
Then, by using \cref{eq:edge_eigs_behavior}, we have that
\begin{align*}
 \sum_{i>I_e}|(\Delta_e u,\tau_i^e)_e|^2 \leq C\|u\|_{H^3(e)}^2\sum_{i>I_e} \frac{1}{\lambda_i^e} \leq C \frac{\|u\|_{H^3(e)}^2}{\sqrt{\lambda^e_{I_e+1}}},
\end{align*}
which proves the claim.
\end{proof}

\begin{remark}\label{rem:regularity}
    The error estimates in \Cref{thm:estimate_sesP_norm} and \Cref{lem:improved_rate_interface} assume regularity of the solution to the Helmholtz equation along the edges of the domain decomposition. By the trace theorem \cite{Grisvard2011}, this smoothness follows from regularity of the solution in a neighborhood of the edges, which can be established under suitable assumptions on the coefficients; see, e.g., \cite[Chapter~8]{GT2001} or \cite[Chapter~4]{Grisvard2011} and also \Cref{sec:4.4}. 
    If the solution $u$ enjoys less regularity than required in the above statements, corresponding error estimates can still be proven.
    For instance, under the regularity $u\in W^{1,p}(\Omega)$, for $p>2$, established above, the corresponding estimate to \Cref{thm:estimate_sesP_norm} will have the exponent $(p-2)/(4p)>0$ instead of $3/4$.
    To derive the latter statement, we use that $H^{1-1/p}_{0}(e)$ is an interpolation space between $L^2(e)$ and $H^1_0(e)$ \cite[Ch.~1, Thm.~11.6]{LionsMagenes1972}, $H^{1/2}_{00}(e)$ is an interpolation space between $L^2(e)$ and $H^{1-1/p}_0(e)$ \cite[Ch.~1, Thm.~11.7]{LionsMagenes1972}, and that $(u-I_\V u)_{\mid e}\in W^{1-1/p,p}_0(e)\subset H^{1-1/p}_{0}(e)\subset H^{1/2}_{00}(e)$ can be approximated in $V^e$.
\end{remark}

\subsection{Estimates for adjoint approximability constant}\label{sec:4.4}
The well-posedness and the error estimates for the interface problem in \Cref{sec:well_posedness_interface} relied on the smallness of the adjoint approximability constant $\sigma^*$ defined in \cref{eq:adjoint_approx_constant}.
In order to estimate $\sigma^*$ in our setting, we follow the ideas of~\cite{GrahamSauter2019}, 
where they work with piecewise linear finite elements, requiring $H^2(\Omega)$-regularity of the adjoint problem \cref{eq:adjoint}. 
Since we need to consider the interface problem only, we can require a weaker $H^2$-regularity in the vicinity of the interface $\Gamma$. Let, for some fixed $\delta>0$, 
\begin{align}\label{eq:Omega_G}
\Omega_{\Gamma}\subset\{x\in\Gamma: {\rm dist}(x,\Gamma\cup \partial\Omega)<\delta\} 
\end{align}
denote an open neighborhood of $\Gamma\cup\partial\Omega$ such that $\partial\Omega_\Gamma\setminus\partial\Omega$ is smooth,  see \Cref{fig:sketch}. 
We start with a result similar to \cref{thm:estimate_sesP_norm} but with slightly weaker regularity assumptions.

\begin{lemma}\label{lem:best_approx_H2}
Let $\lambda^\Gamma=\min_{e}\lambda_{I_e}^e$. Then, there exists a constant $C>0$ with $C=O(\|\kappa\|_\infty)$ such that, for all $z\in H^2(\Omega_\Gamma)$, it holds that
\begin{align}\label{eq:estimate_adjoint_approximability}
\inf_{\vG\in \VhG}\|\ExtG(z_{\mid\Gamma})-\vG\|_{\sesP}
\leq \frac{C}{\sqrt{\lambda^\Gamma}}\|z\|_{H^{2}(\Omega_\Gamma)}.
\end{align}
\end{lemma}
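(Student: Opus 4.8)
The plan is to follow the proof of \cref{thm:estimate_sesP_norm} almost verbatim, replacing the edge-wise hypothesis $u\in H^2(e)$ by the two-dimensional regularity $z\in H^2(\Omega_\Gamma)$ and invoking a trace theorem to pass from the neighborhood $\Omega_\Gamma$ to the individual edges. The price of the weaker hypothesis is that only $H^{\myfrac{3}{2}}(e)$-regularity of the edge traces is available, so the sharp estimate \cref{eq:approximation_err_H1_edge} used in \cref{thm:estimate_sesP_norm} must be replaced by \cref{lem:approximation_properties_edge2}; this produces the rate $1/\sqrt{\lambda_{I_e}^e}$ rather than $(\lambda_{I_e+1}^e)^{-\myfrac{3}{4}}$, and taking the minimum over the finitely many edges yields the factor $1/\sqrt{\lambda^\Gamma}$.

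Concretely, I would first record that in two dimensions $H^2(\Omega_\Gamma)\hookrightarrow C^0(\overline{\Omega_\Gamma})$, so $z$ is continuous at every vertex and the nodal interpolant $I_\V z$ is well-defined, with $(z-I_\V z)_{\mid e}\in\HOe$. Since $z\in\VG$, we have $z=\ExtG(z_{\mid\Gamma})$, so the same interpolant as in \cref{eq:interpolant}, namely $\vG = \ExtG I_\V z + \sum_{e\in\E}\ExtE P^e_{I_e}(z-I_\V z)\in\VhG$, is admissible and satisfies $z(p)-\vG(p)=0$ for all $p\in\V$. Repeating the localization \cref{eq:localization,eq:localization2}, the extension bound \cref{eq:extension_bound} together with the equivalence of the $\sesP$- and $\Hj$-norms gives
\begin{align*}
\|z-\vG\|_\sesP^2 \leq C\sum_{e\in\E}\|z-\vG\|_{\HtrE}^2,
\end{align*}
where $C$ depends only on the fixed decomposition through $\max_j(1+\myfrac{1}{\beta^j})^2$ and $\|\kappa\|_\infty$.

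For each edge the summand equals $\|z-I_\V z - P^e_{I_e}(z-I_\V z)\|_{\HtrE}$, and since the bound of \cref{lem:approximation_properties_edge2} is realized precisely by the choice $v^e=P^e_{I_e}(z-I_\V z)$, that lemma yields $\|z-\vG\|_{\HtrE}\leq C(\lambda_{I_e}^e)^{-\myfrac{1}{2}}\|z\|_{H^{\myfrac{3}{2}}(e)}$. It then remains to combine this with the trace inequality $\|z_{\mid e}\|_{H^{\myfrac{3}{2}}(e)}\leq C\|z\|_{H^2(\Omega_\Gamma)}$, to sum over the finitely many edges, and to use $\lambda_{I_e}^e\geq\lambda^\Gamma$ to conclude. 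The main obstacle is this trace step: one must verify that the regularity of $\Omega_\Gamma$ imposed in \cref{eq:Omega_G} (smoothness of $\partial\Omega_\Gamma\setminus\partial\Omega$) suffices for the $H^2(\Omega_\Gamma)\to H^{\myfrac{3}{2}}(e)$ trace map to be bounded uniformly over all edges, and to handle the corners where edges meet so that the constant $C$ remains independent of $z$.
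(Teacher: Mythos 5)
Your proposal is correct and follows essentially the same route as the paper's proof: localize via \cref{eq:localization2}, note that $z\in H^2(\Omega_\Gamma)$ gives $z\in H^{\myfrac{3}{2}}(e)$ on each edge by embedding, apply \cref{lem:approximation_properties_edge2} edge by edge, and conclude with the definition of $\lambda^\Gamma$. The trace step you flag as the main obstacle is exactly what the paper dispatches with the same embedding argument; since the decomposition is fixed and $\E$ is finite, the trace constants can simply be maximized over the edges, so no additional uniformity argument is needed.
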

\begin{proof}
In view of \cref{eq:localization2}, which also introduces the $\kappa$ dependency of the constants, it suffices to estimate
\begin{align*}
\inf_{v^e\in V_{I_e}^e}\| z -I_\V z - v^e \|_{\HtrELM}
\end{align*}
in terms of $\|z\|_{H^2(\Omega_\Gamma)}$.
By embedding, $z\in H^2(\Omega_\Gamma)$ implies that $z\in H^{\myfrac{3}{2}}(e)$ for all $e\in\E$, and applying \cref{lem:approximation_properties_edge2} yields that
\begin{align*}
\inf_{\vG\in \VhG}\|z-\vG\|_{\HtrELM}
\leq \frac{C}{\sqrt{\lambda_{I_e+1}^e}}\|z\|_{H^{\myfrac{3}{2}}(e)}.
\end{align*}
Hence, the definition of $\lambda^\Gamma$ yields the assertion.
\end{proof}
Assuming regularity of the coefficients of the Helmholtz problem locally around the interface $\Gamma$, we can next estimate the adjoint approximability constant.


\begin{theorem}\label{thm:estimate_sigma}
If $a\in C^{0,1}(\overline{\Omega_\Gamma})$ and $\beta\in C^{0,1}(\overline{\Gamma_R})$,
then there exists a constant $C>0$ such that $C=O(\|\kappa\|_\infty^2 C_{\rm stab})$, with $C_{\rm stab}$ from \cref{eq:Cstab}, and
\begin{align}\label{eq:est_adj_approx_2}
    \sigma^* \leq \myfrac{C}{\sqrt{\lambda^\Gamma}}.
\end{align}
\end{theorem}
\begin{proof}
In view of \cref{eq:adjoint_approx_constant}, the key for estimating $\sigma^*$
is to obtain an estimate for
$$
\inf_{\vG\in \VhG}\|T_\Gamma^*(\kappa^2\varphi)-\vG\|_\sesP
$$
in terms of $\|\kappa\varphi\|$ for arbitrary $\varphi\in L^2(\Omega)$, where $T_\Gamma^*(\kappa^2\varphi)\in \VG$ is the interface component of the solution to the adjoint problem \cref{eq:adjoint}.
Using \cref{eq:estimate_adjoint_approximability}, we have the bound
$$
\inf_{\vG\in \VhG}\|T_\Gamma^*(\kappa^2\varphi)-\vG\|_\sesP\leq \frac{C}{\sqrt{\lambda^\Gamma}}\| T^*(\kappa^2 \varphi)\|_{H^2(\Omega_\Gamma)},
$$
with $C=O(\|\kappa\|_\infty)$, provided $T^*(\kappa^2 \varphi)\in H^2(\Omega_\Gamma)$.
Next, we show that there exists a constant $C_*>0$ such that $C_*=O(\|\kappa\|_\infty C_{\rm stab})$, with $C_{\rm stab}$ from \cref{eq:Cstab}, and
\begin{align}\label{eq:est_adjoint}
 \| T^*(\kappa^2 \varphi)\|_{H^2(\Omega_\Gamma)}\leq C_* \|\kappa \varphi\|_{L^2(\Omega)},
\end{align}
which will conclude the proof.
Let $z=T^*(\kappa^2\varphi)$ denote the solution of the dual problem with $G(v)=(v,\kappa^2\varphi)$, $v\in \HDO$.
Similar to the comments at the end of \Cref{sec:well-posedness-Helmholtz}, we deduce that $z\in C^0(\overline{\Omega})$.
For any $\Omega'\Subset \Omega$, \cite[Theorem~8.8]{GT2001} ensures that 
\begin{align}\label{eq:H2_bound1}
\|z\|_{H^2(\Omega_\Gamma\cap\Omega')}\leq C\|\kappa\|_\infty( \|z\|_{\sesP} + \|\kappa\varphi\|_{L^2(\Omega)}),
\end{align}
with a constant $C$ depending on $a_{\text min}$, $\|a\|_{C^{0,1}}$, and the distance of $\Omega_\Gamma\cap\Omega'$ to $\partial\Omega$ but independent of $\kappa$.
Rewriting the dual problem into a Poisson problem with mixed Dirichlet-Neumann boundary conditions
and applying \cite[Corollary~4.4.3.8]{Grisvard2011}, we deduce $H^2$-regularity of $z$ also in a neighborhood of $\Gamma_R\cap\overline{\Omega_\Gamma}$ as follows. Consider a connected component $\Gamma_0$ of $\Gamma_R$. Let $\Omega_0\subset\Omega_\Gamma$ be a polygonal domain such that $\Gamma_0\subset\partial\Omega_0$ and that $\partial\Omega_0\setminus\partial\Omega$ is smooth and, if $\Gamma_D\neq \emptyset$, the angles at the boundaries of $\partial\Omega_0\setminus\partial\Omega$ are strictly convex, see \Cref{fig:sketch}. 
\begin{figure}[ht!]
    \centering
    \begin{tikzpicture}[scale=0.75]
	\begin{scope}[]
    \path [clip,draw] (0,3) -- (3,0) -- (8,0) -- (11,3) -- (11,6) -- (8,9) -- (3,9) -- (0,6) -- (0,3);
	\draw [line width=1cm,black!10] (0,3) -- (11,3);
	\draw [line width=1cm,black!10] (0,6) -- (11,6);
	\draw [line width=1cm,black!10] (3,0) -- (3,9);
	\draw [line width=1cm,black!10] (8,0) -- (8,9);

	\draw [line width=1cm,black!10]  (0,3) -- (3,0) -- (8,0) -- (11,3);
	\draw [line width=0.5cm,black!30,cap=round]  (0,3) -- (3,0) -- (8,0) -- (11,3); 
	
    \draw (0,3) -- (3,0) -- (8,0) -- (11,3) -- (11,6) -- (8,9) -- (3,9) -- (0,6) -- (0,3);
	\draw [dashed] (0,3) -- (11,3);
	\draw [dashed] (0,6) -- (11,6);
	\draw [dashed] (3,0) -- (3,9);
	\draw [dashed] (8,0) -- (8,9);
	
	\draw [line width=0.1cm,cap=round]  (0,3) -- (3,0) -- (8,0) -- (11,3); 
	\draw (3.35,3.3) node{$\Omega_\Gamma$};
	\draw (3.35,0.5) node{$\Omega_0$};
	\end{scope}
	
	\draw (9.5,0.75) node{$\Gamma_R$};
	\draw (9.5,8.5) node{$\Gamma_D$};
  
    \end{tikzpicture}
    \caption{A sketch of a polygonal domain $\Omega$ occupying an octagon partitioned into $9$ subdomains $\Omega_j$, $j=1,\ldots,9$. The portion $\Gamma_R$ is depicted in bold. The dashed lines and the edges in $\Gamma_R$ indicate the set of edges in $\mathcal{E}$. The light grey area corresponds to $\Omega_\Gamma$ defined in \cref{eq:Omega_G}. The dark grey area corresponds to the domain $\Omega_0$ used in the proof of \Cref{thm:estimate_sigma}.\label{fig:sketch}}
\end{figure}
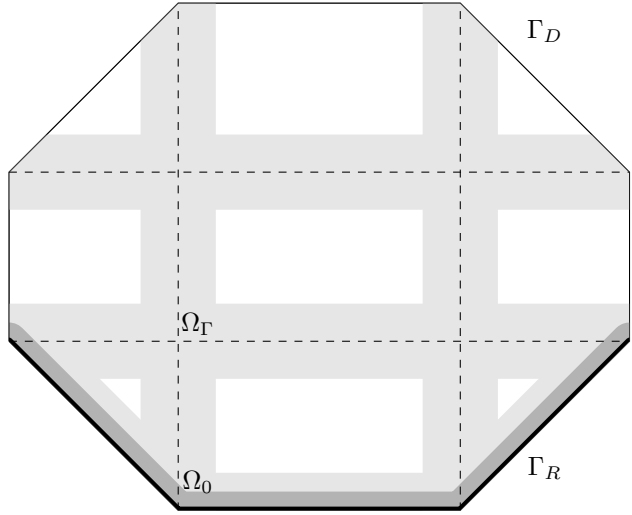
Note that, if $\Gamma_D\neq \emptyset$, then $z$ is $H^2$-regular in a neighborhood of $\partial\Omega_0\setminus\partial\Omega$ satisfying a bound corresponding to~\cref{eq:H2_bound1}; cf.~\cite[Theorem~8.12]{GT2001}. We observe that $z$ is the weak solution of
\begin{align*}
    -\Delta z + z &= a^{-1}\left(\kappa^2\varphi + (\kappa^2+a) z + \nabla a\cdot\nabla z\right) =:\tilde f \quad\text{in } \Omega_0,\\
    \partial_n z &= -ia^{-1}\omega\beta z\quad\text{on } \Gamma_0,\\
    z&=0 \quad\text{on } \Gamma_D\cap\partial\Omega_0,\\
    z&=z \quad\text{on } \Omega\cap \partial\Omega_0,
\end{align*}
with $\tilde f\in L^2(\Omega)$.
Using \Cref{ass:parameters}(i) and continuity of $z$ on $\partial\Omega_0$, we deduce that $z\in H^2(\Omega_0)$ \cite[Corollary~4.4.3.8]{Grisvard2011}.
In particular, $\partial_n z$ is in $H^{1/2}$ on each segment of $\partial\Omega_0$ \cite[Theorem~1.5.2.8]{Grisvard2011}.
Using a lifting of $\partial_n z$ as in \cite[Lemma~2.12]{Gander2015},  we then obtain the bound
\begin{align}\label{eq:H2_bound2}
\|z\|_{H^2(\Omega_0)}\leq C \left((1+\|\kappa\|_\infty) \big( \|z\|_{\sesP} + \|\kappa\varphi\|_{L^2(\Omega)} \big)+ \|\partial_n z\|_{H^{1/2}_{pw}(\partial\Omega_0)}\right),
\end{align}
where  $\|\partial_n z\|_{H^{1/2}_{pw}(\partial\Omega_0)}$ denotes the sum of $H^{1/2}$-norms over the corresponding segments of $\partial\Omega_0$.
The $H^{1/2}(\Omega\cap\Omega_0)$-norm of $\partial_n z$ can be bounded by the right-hand side of \cref{eq:H2_bound1} using the previous considerations. 
The $H^{1/2}$-norms on the segments of $\Gamma_0$ are bounded, using the boundary condition $\partial_n z = -ia^{-1}\omega\beta z$, by the corresponding $H^{1/2}$-norms of $z$, which is again bounded by the terms on the right-hand side of \cref{eq:H2_bound1}.
To bound $\|\partial_n z\|_{H^{1/2}(\Gamma_D\cap\partial\Omega_0)}$, consider  $\Omega''\Subset\Omega_0\cup(\Gamma_D\cap\partial\Omega_0)$. An application of \cite[Theorem~9.13]{GT2001} yields a constant $C>0$ independent of $\kappa$ such that
\begin{align}\label{eq:H2_bound4}
\|z\|_{H^2(\Omega'')}\leq C\|\kappa\|_\infty \left(\|z\|_{\sesP}+\|\kappa\varphi\| \right).
\end{align}
An application of the trace theorem \cite[Theorem~1.5.2.8]{Grisvard2011}, yields a bound for $\|\partial_n z\|_{H^{1/2}(\Gamma_D\cap\partial\Omega_0)}$ in terms of the right-hand side in \cref{eq:H2_bound4}.

Combining the above estimates with \cref{eq:Cstab}, we infer that
\begin{align}\label{eq:H2_bound3}
\|z\|_{H^2(\Omega_\Gamma)}\leq C\|\kappa\|_\infty C_{\rm stab} \|\kappa\varphi\|_{L^2(\Omega)},
\end{align}
with a constant $C$ independent of $\omega$. Hence \cref{eq:est_adjoint} holds with $C_*=C\|\kappa\|_\infty C_{\rm stab}$.
\end{proof}
Since the method and its analysis presented here employ a fixed domain decomposition, the regularity assumption on the coefficients required in \Cref{thm:estimate_sigma} might be verified in certain applications; see, e.g., the periodic structure in \Cref{sec:periodic} below.
A consequence of the previous result is that, by using sufficiently many edge modes, the assumptions of \cref{lem:well-posedness_interface_approx} can be verified. More precisely, if $C_{\rm stab}$ is independent of $\omega$, which holds in certain cases \cite[Theorem~4.5]{GrahamSauter2019}, we can use \cref{eq:edge_eigs_behavior} and $\sigma^*=O(C_{\rm stab}\|\kappa\|^2\max_e|e|/\min_e I_e)$ to infer that the number of modes should scale like $I_e \geq C \|\kappa\|^2_\infty |e|$ for stability.
We obtain the following statement.

\begin{theorem}\label{thm:convergence_interface_L2}
In addition to the assumptions of \cref{thm:estimate_sesP_norm,thm:estimate_sigma} suppose that $u\in H^{2+\alpha}(e)$ for all $e\in\E$, for $\alpha=0,1$.
Then, for some $C>0$, and $\lambda^\Gamma$ sufficiently large, it holds
\begin{align*}
\|\kappa (\uG-\uhG)\| \leq \frac{C}{(\lambda^\Gamma)^{\myfrac{5}{4} + \myfrac{\alpha}{4}}} \sum_{e\in\E} \|u\|_{H^{2+\alpha}(e)}.
\end{align*}
\end{theorem}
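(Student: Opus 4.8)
The plan is to combine the $L^2$-type estimate from Lemma~\ref{lem:well-posedness_interface_approx} with the refined best-approximation bound. The target is an estimate on $\|\kappa^2(\uG-\uhG)\|$, so I would start from \cref{eq:L2_estimate_interface}, which gives
\begin{align*}
\|\kappa(u_\Gamma-\uhG)\| \leq 2 C_\sesH^2 \sigma^* \inf_{\vG\in\VhG}\|u_\Gamma-\vG\|_\sesP.
\end{align*}
This already isolates the two ingredients I need: the adjoint approximability constant $\sigma^*$, controlled by \cref{thm:estimate_sigma} as $\sigma^*\leq C/\sqrt{\lambda^\Gamma}$, and the best-approximation error in the $\sesP$-norm, controlled by \cref{thm:estimate_sesP_norm} (for $\alpha=0$) and \cref{lem:improved_rate_interface} (for $\alpha=1$). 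The factor $\kappa^2$ versus $\kappa$ is a cosmetic matter of where one places powers of the wavenumber; I would absorb the extra $\kappa$ into the constant $C$ and the interpretation of the norms, as done consistently throughout the paper.

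First I would substitute the bound on $\sigma^*$ from \cref{thm:estimate_sigma}, giving a factor $(\lambda^\Gamma)^{-1/2}$. Next, for the best-approximation term, I would invoke \cref{thm:estimate_sesP_norm} when $\alpha=0$ to get
\begin{align*}
\inf_{\vG\in\VhG}\|u_\Gamma-\vG\|_\sesP \leq \sum_{e\in\E}\frac{C}{(\lambda_{I_e+1}^e)^{3/4}}\|\Delta_e u - P^e_{I_e}\Delta_e u\|_{L^2(e)},
\end{align*}
and \cref{lem:improved_rate_interface} when $\alpha=1$ to get a bound of order $\sum_e (\lambda_{I_e+1}^e)^{-1}\|u\|_{H^3(e)}$. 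In the $\alpha=0$ case I would further bound $\|\Delta_e u - P^e_{I_e}\Delta_e u\|_{L^2(e)}$ by $\|\Delta_e u\|_{L^2(e)}\leq C\|u\|_{H^2(e)}$, since the projection is an orthogonal projection in $L^2(e)$ and hence non-expansive. Using the definition $\lambda^\Gamma=\min_e \lambda_{I_e}^e$ to replace each edge-specific eigenvalue $\lambda_{I_e+1}^e$ by the uniform lower bound $\lambda^\Gamma$, the two cases combine into a single exponent: the best-approximation error is of order $(\lambda^\Gamma)^{-(3/4+\alpha/4)}\sum_e\|u\|_{H^{2+\alpha}(e)}$.

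Multiplying the $(\lambda^\Gamma)^{-1/2}$ from $\sigma^*$ by the $(\lambda^\Gamma)^{-(3/4+\alpha/4)}$ from the best-approximation error yields the exponent $5/4+\alpha/4$ claimed in the statement, which is the heart of the bookkeeping. I would then collect all wavenumber-dependent prefactors ($C_\sesH^2$, the impedance and extension constants, the power of $\kappa$) into the single constant $C$. \textbf{The main obstacle} I anticipate is not any single estimate but the consistency of the regularity hypotheses across the two regimes: \cref{thm:estimate_sesP_norm} is stated for $u\in H^2(e)$ while \cref{lem:improved_rate_interface} additionally requires $u\in H^3(e)$ together with the growth condition $|\partial_e\tau_i^e(p)|\leq C\sqrt{\lambda_i^e}$ on the edge modes. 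I would therefore state the result uniformly as requiring $u\in H^{2+\alpha}(e)$ and tacitly carry the edge-mode growth assumption of \cref{lem:improved_rate_interface} into the $\alpha=1$ branch, verifying that the line-segment edges of a polygonal decomposition satisfy it. The other point requiring care is confirming that $\lambda_{I_e+1}^e$ and $\lambda_{I_e}^e$ are interchangeable up to constants via the asymptotics \cref{eq:edge_eigs_behavior}, so that replacing everything by $\lambda^\Gamma$ incurs only an absorbable factor.
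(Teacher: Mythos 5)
Your proposal is correct and follows essentially the same route as the paper: insert the bound $\sigma^*\leq C/\sqrt{\lambda^\Gamma}$ from \cref{thm:estimate_sigma} and the best-approximation bounds from \cref{thm:estimate_sesP_norm} ($\alpha=0$) or \cref{lem:improved_rate_interface} ($\alpha=1$) into \cref{eq:L2_estimate_interface}, then combine the exponents $\myfrac{1}{2}+\myfrac{3}{4}+\myfrac{\alpha}{4}$. Your extra bookkeeping (non-expansiveness of $P^e_{I_e}$ to reach $\|u\|_{H^2(e)}$, absorbing the extra power of $\kappa$ into $C$, and replacing $\lambda^e_{I_e+1}$ by $\lambda^\Gamma$, which follows directly from the monotone ordering of the eigenvalues without any asymptotics) just makes explicit what the paper's one-line proof leaves implicit.
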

\begin{proof}
Inserting the estimate \cref{eq:est_adj_approx_2} for $\sigma^*$ and the error bounds stated in \cref{thm:estimate_sesP_norm} (if $\alpha=0$) or \cref{lem:improved_rate_interface} (if $\alpha=1$) into \cref{eq:L2_estimate_interface} yields the assertion. 
\end{proof}

\section{Numerical results} \label{sec:results}
We provide numerical experiments to support our theoretical results and show the effectiveness of our approach.
Since, in general, we cannot compute the ACMS basis functions in~\cref{eq:bubble_space_finite,eq:interface_space_finite} analytically, we compute them approximately using an underlying finite element discretization that adopts piecewise linear and continuous functions. To do so, we employ a quasi-uniform triangulation of the computational domain $\Omega$ into (non-curved) triangles, such that each subdomain $\Oj$ is the union of elements of that triangulation; see, for instance,~\cref{fig:circular_mesh}. 
We denote the corresponding finite element solution of the Helmholtz problem by $u_{\rm FEM}$. Moreover, the resulting errors are quantified according to~\cref{tab:errors}.
\begin{table}[t] \centering 
\caption{Different errors in the $L^2$- (left) and $H^1$-norms (right) discussed in~\cref{sec:results} row by row: absolute approximation errors of the ACMS solution $\uh$ with respect to the exact solution $u$, absolute approximation errors of the ACMS solution $\uh$ with respect to the finite element solution $u_{\rm FEM}$, relative approximation errors of the ACMS solution $\uh$ with respect to the finite element solution $u_{\rm FEM}$, approximation errors of the finite element solution $u_{\rm FEM}$ with respect to the exact solution $u$, and interpolation error of the nodal interpolant $I_h(u)$ with respect to the exact solution $u$.
	\label{tab:errors}
	}
	\begin{tabular}{l l c l l}
		\toprule
		\multicolumn{2}{c}{$L^2$-errors} && \multicolumn{2}{c}{$H^1$-errors} \\
		\cline{1-2} \cline{4-5}
		$e_0$           & $\|u-\uh\|_{L^2(\Omega)}$                         && $e_1$           & $\|u-\uh\|_{\HO}$                         \\
		$e_{0,h}$       & $\|u_{\rm FEM}-\uh\|_{L^2(\Omega)}$               && $e_{1,h}$       & $\|u_{\rm FEM}-\uh\|_{\HO}$               \\
		$e_{0,h}^r$     & $\myfrac{e_{0,h}}{\|u_{\rm FEM}\|_{L^2(\Omega)}}$ && $e_{1,h}^r$     & $\myfrac{e_{1,h}}{\|u_{\rm FEM}\|_{\HO}}$ \\
		$e_{0,\rm FEM}$ & $\|u-u_{\rm FEM}\|_{L^2(\Omega)}$                 && $e_{1,\rm FEM}$ & $\|u-u_{\rm FEM}\|_{\HO}$                 \\
		$e_{0,\rm int}$ & $\|u-I_h(u)\|_{L^2(\Omega)}$                      && $e_{1,\rm int}$ & $\|u-I_h(u)\|_{\HO}$ \\
		\bottomrule
	\end{tabular} 

\end{table}

\subsection{Classical Helmholtz example}\label{sec:classical_Helmholtz_example}
Let $\Omega=B_1(0)$ be the unit disc and $\Gamma_R=\partial\Omega$.
We first consider the boundary value problem given by~\cref{eq:model_strong,eq:model_strong_bcR} with $a(x) = 1$, $\beta(x)=1$.
The source terms $f$ and $g$ are defined such that 
the problem has the plane wave solution 
	$u(x) = \exp({-\iota  k\cdot x})$,
with wave vector $k=\kappa(0.6,0.8)$ and variable wavenumber $\kappa$. 
In particular, $f(x)=0$, therefore the solution satisfies $u_B=0$, and $u$ can be approximated by solving the interface problem \cref{eq:interface_approx} only; see  \cref{thm:bubble_convergence_rate} and also~\cref{rem:local_bubble}.

We employ a decomposition of $\Omega$ as depicted in \cref{fig:circular_mesh} (left) with an initial coarse triangulation as shown in \cref{fig:circular_mesh} (right), which corresponds to a domain decompositions into $J = 8$ subdomains. In this domain decomposition, the number of edges in $\E$ is $12$ and the number of vertices in $\V$ equals $5$.
The ACMS solution $\uh$ and the finite element solution $u_{\rm FEM}$ are computed using uniform refinements of the coarse triangulation; see~\cref{fig:circular_mesh} (right). 

\begin{figure}[t]
\centering 
\includegraphics[width=0.45\textwidth]{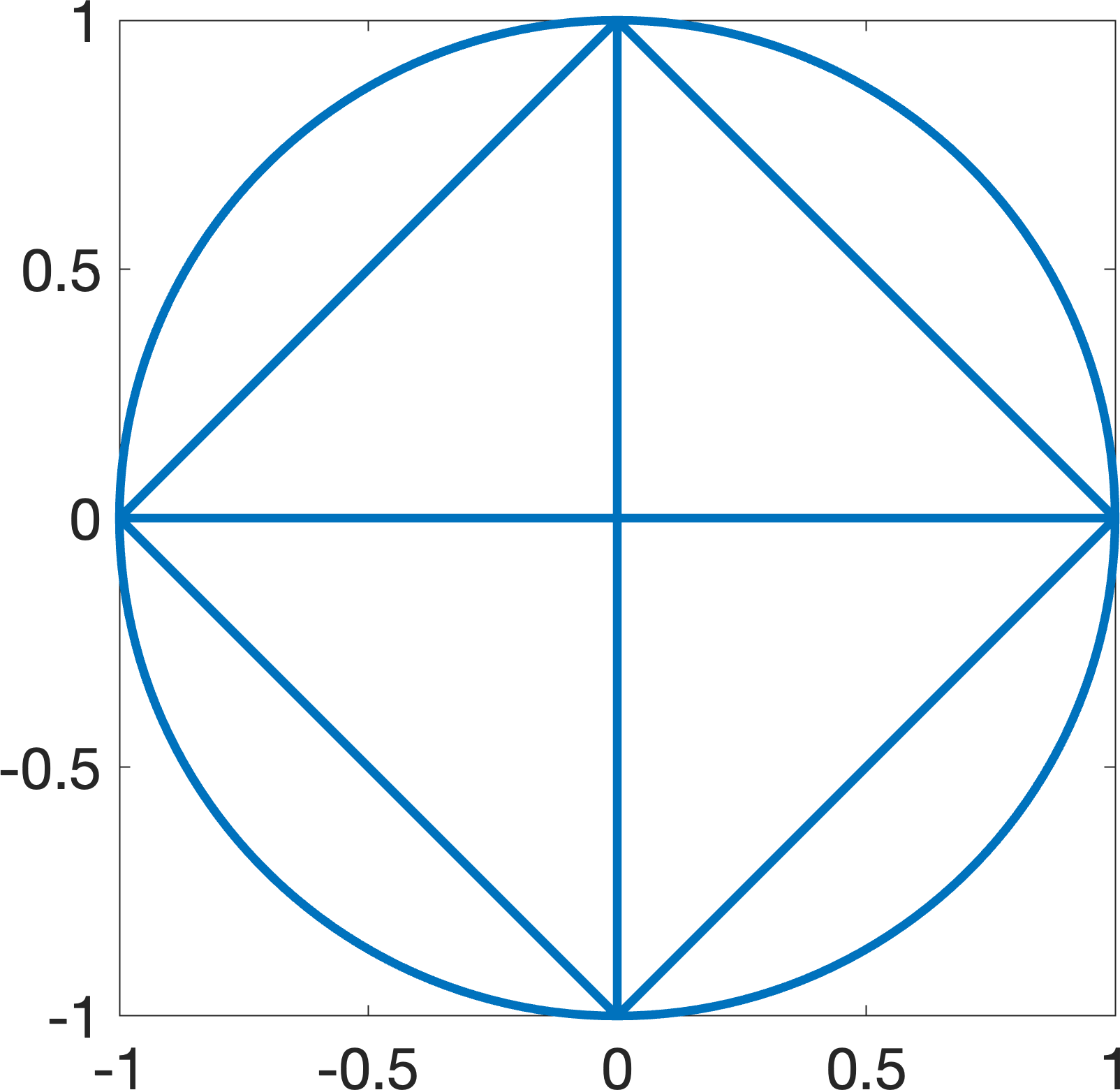}
\hfill
\includegraphics[width=0.45\textwidth]{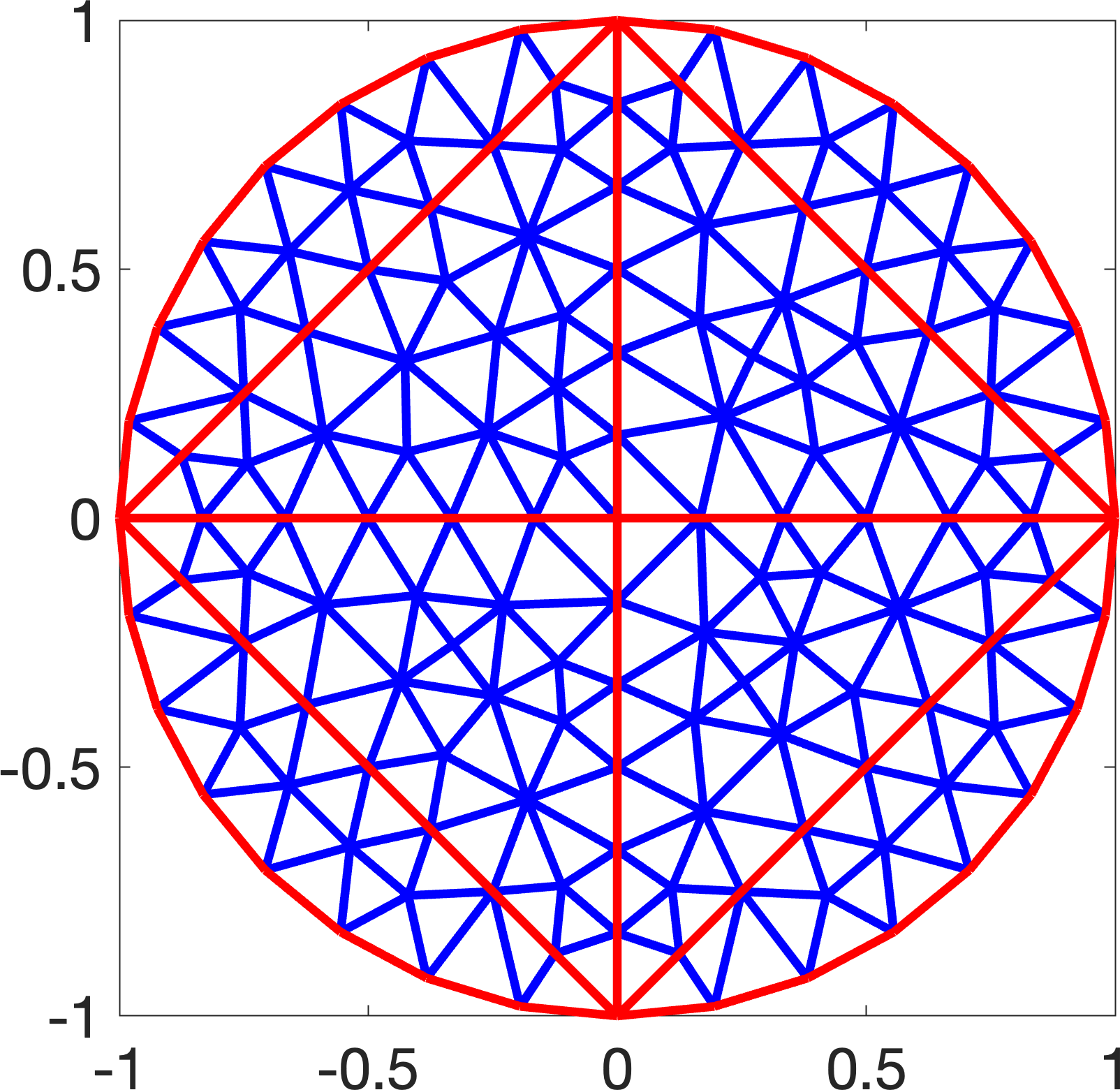}
\label{fig:circular_mesh}
\caption{\textbf{Left:} $\Omega$ is the unit disc, decomposed into $8$ subdomains. \textbf{Right:} the corresponding coarse (red) and fine (blue) finite element meshes.} 
\end{figure}

\subsubsection{Low wavenumber}
Let us first consider the case of a low wavenumber $\kappa=1$. In this setting, we obtain $\|u\|_{\HO}\approx 2.5$ and $\|u\|_{L^2(\Omega)}\approx 1.8$ for the plane wave solution $u$.
We compare the plane wave solution $u$ with the approximation $\uh$ given by the ACMS method 
for different underlying finite element discretizations and multi-indices $S_\Gamma$, which define the order of the interface approximation space $V_{\Gamma,S_\Gamma}$ introduced in \cref{eq:interface_space_finite}.
Moreover, we use the same value of  $I_e \in \{2,4,8,\ldots, 64\}$ for each of the $12$ edges in $\E$.
 
\begin{table}[t] 
	\centering\small\setlength\tabcolsep{0.55em}
	\caption{Classical Helmholtz example with $\kappa=1$: errors $e_{0}$ (top) and $e_{1}$ (bottom) as defined in~\cref{tab:errors}
		for different number of edge modes $|S_\Gamma|\in\{24,28,\ldots,768\}$, computed on different finite element meshes with mesh size $h$, and the corresponding interpolation error for the nodal interpolant. \label{tab:convergence_edge_modes_classical}}
	\begin{tabular}{l l l l l l l l}
		\toprule
        && \multicolumn{6}{c}{$|S_\Gamma|$}\\
        \cline{3-8}
		\multicolumn{1}{c}{$h$} & \multicolumn{1}{c}{$e_{0,\rm int}$} & \multicolumn{1}{c}{24} & \multicolumn{1}{c}{48} & \multicolumn{1}{c}{96} & \multicolumn{1}{c}{192} & \multicolumn{1}{c}{384} & \multicolumn{1}{c}{768} \\ \hline
		$1.3{\cdot} 10^{-3}$    & $7.6{\cdot} 10^{-7}$                & $4.1{\cdot} 10^{-3}$   & $7.5{\cdot} 10^{-4}$   & $1.1{\cdot} 10^{-4}$   & $1.5{\cdot} 10^{-5}$    & $2.5{\cdot} 10^{-6}$    & $1.5{\cdot} 10^{-6}$    \\
		$6.9{\cdot} 10^{-4}$    & $1.9{\cdot} 10^{-7}$                & $4.1{\cdot} 10^{-3}$   & $7.5{\cdot} 10^{-4}$   & $1.1{\cdot} 10^{-4}$   & $1.5{\cdot} 10^{-5}$    & $2.0{\cdot} 10^{-6}$    & $4.6{\cdot} 10^{-7}$    \\
		$3.4{\cdot} 10^{-4}$    & $4.7{\cdot} 10^{-8}$                & $4.1{\cdot} 10^{-3}$   & $7.5{\cdot} 10^{-4}$   & $1.1{\cdot} 10^{-4}$   & $1.5{\cdot} 10^{-5}$    & $1.9{\cdot} 10^{-6}$    & $2.6{\cdot} 10^{-7}$    \\ \bottomrule\toprule
		\multicolumn{1}{c}{$h$} & \multicolumn{1}{c}{$e_{1,\rm int}$} & \multicolumn{1}{c}{24} & \multicolumn{1}{c}{48} & \multicolumn{1}{c}{96} & \multicolumn{1}{c}{192} & \multicolumn{1}{c}{384} & \multicolumn{1}{c}{768} \\ \hline
		$1.3{\cdot} 10^{-3}$    & $1.0{\cdot} 10^{-3}$                & $5.7{\cdot} 10^{-2}$   & $1.6{\cdot} 10^{-2}$   & $4.3{\cdot} 10^{-3}$   & $1.5{\cdot} 10^{-3}$    & $1.1{\cdot} 10^{-3}$     & $1.0{\cdot} 10^{-3}$     \\
		$6.9{\cdot} 10^{-4}$    & $5.4{\cdot} 10^{-4}$                & $5.7{\cdot} 10^{-2}$   & $1.6{\cdot} 10^{-2}$   & $4.2{\cdot} 10^{-3}$   & $1.2{\cdot} 10^{-3}$    & $6.0{\cdot} 10^{-4}$     & $5.4{\cdot} 10^{-4}$     \\
		$3.4{\cdot} 10^{-4}$    & $2.7{\cdot} 10^{-4}$                & $5.7{\cdot} 10^{-2}$   & $1.6{\cdot} 10^{-2}$   & $4.2{\cdot} 10^{-3}$   & $1.1{\cdot} 10^{-3}$    & $3.8{\cdot} 10^{-4}$     & $2.7{\cdot} 10^{-4}$     \\ \bottomrule
	\end{tabular}
 \end{table}

\cref{tab:convergence_edge_modes_classical} shows the approximation error of the ACMS method and the corresponding errors for the nodal interpolant $I_h(u)$ for each mesh resolution.
We observe that the $H^1$-error decreases with a rate between $3.5$ and $4.0$, until it approaches the accuracy of the nodal interpolant $I_hu$ of the underlying finite element mesh. Similarly, the $L^2$-error decays cubically for sufficiently fine finite element meshes.
The $L^2$-convergence is in good agreement with~\cref{thm:convergence_interface_L2} and the relation $\lambda_i^e \sim i^2$ stated in~\cref{eq:edge_eigs_behavior}, while the $H^1$-convergence is better than predicted by \cref{lem:improved_rate_interface} in this example.
We may conclude that, already with a low number of edge modes $I_e\leq 32$, or which corresponds to $|S_\Gamma|\leq 384$, the ACMS solution achieves the accuracy of the nodal interpolant in the $H^1$-norm, which employs around $8{\cdot}10^6$ vertices for $h\approx 3.4{\cdot}10^{-4}$.
In fact, as shown in \cref{tab:convergence_edge_modes_classical_FEM}, the ACMS solution converges to the (standard) finite element solution $u_{\rm FEM}$ quadratically in the $H^1$-norm and cubically in the $L^2$-norm, respectively.
\begin{table}[t] 
	\centering\small\setlength\tabcolsep{0.55em}
	\caption{Classical Helmholtz example with $\kappa=1$: errors $e_{0,h}$ and $e_{1,h}$ as defined in~\cref{tab:errors} 
		for different number of edge modes $|S_\Gamma|\in\{24,28,\ldots,1536\}$, computed on a finite element mesh with $h\approx 3.4{\cdot}10^{-4}$ and $8\,327\,169$ vertices. \label{tab:convergence_edge_modes_classical_FEM}}
	\begin{tabular}{c c c c c c c c}
		\toprule
        & \multicolumn{7}{c}{$|S_\Gamma|$}\\
        \cline{2-8}
		& \multicolumn{1}{c}{24} & \multicolumn{1}{c}{48} & \multicolumn{1}{c}{96} & \multicolumn{1}{c}{192} & \multicolumn{1}{c}{384} & \multicolumn{1}{c}{768} & \multicolumn{1}{c}{1536} \\ \hline
		       $e_{0,h}$        & $4.1{\cdot} 10^{-3}$   & $7.5{\cdot} 10^{-4}$   & $1.1{\cdot} 10^{-4}$   & $1.5{\cdot} 10^{-5}$    & $1.9{\cdot} 10^{-6}$    & $2.5{\cdot} 10^{-7}$    & $3.1{\cdot} 10^{-8}$     \\
		       $e_{1,h}$        & $5.7{\cdot} 10^{-2}$   & $1.6{\cdot} 10^{-2}$   & $4.2{\cdot} 10^{-3}$   & $1.0{\cdot} 10^{-3}$    & $2.7{\cdot} 10^{-4}$    & $6.9{\cdot} 10^{-5}$    & $1.7{\cdot} 10^{-5}$     \\ \bottomrule
	\end{tabular}
\end{table}





\subsubsection{Higher wavenumbers}
We now repeat the previous numerical experiments focusing on the effect of an increasing wavenumber; namely, we test our method for $\kappa=2,4,8,16,32,64,128$.
Let us mention that~\cref{ass:eig_not_zero} is satisfied for all chosen wavenumbers and that the plane wave solution $u$ changes with $\kappa$.

In \Cref{tab:convergence_edge_modes_wavenumbers}, we show the convergence of the ACMS method to the plane wave solution and, for comparison, we display the error between the nodal interpolant and the finite element solution.
For lower wavenumber $\kappa=2$, we observe a similar behavior as in the previous section, i.e., close to cubic convergence of $e_{0}$ and close to quadratic convergence of $e_{1}$ until it occurs a saturation due to the limited resolution of the underlying FEM mesh. 

For wavenumbers $\kappa\leq 64$, we observe that $e_{1,{\rm int}}$ and $e_{1,{\rm FEM}}$ behave very similarly when $\kappa$ is increased.
However, if $\kappa=128$, $e_{1,{\rm FEM}}$ is more than one order of magnitude larger than $e_{1,{\rm int}}$, which may indicate that the FEM mesh is too coarse.
For $|S_\Gamma|\geq 768$, i.e., at least $64$ modes per edge, the ACMS error $e_1$ is close to the FEM error $e_{1,{\rm FEM}}$ for all $\kappa$. We also observe that, for $\kappa\geq 32$, $|S_\Gamma|$ has to be sufficiently large to have monotonic decay in $e_1$. 
Notably, up to a certain number of edge modes, we may even see an increase in $e_1$. Ultimately, after reaching a certain threshold in the number of edge modes, which increases with $\kappa$, we observe a significant drop in the error $e_1$, bringing it to a comparable level as $e_{1,{\rm FEM}}$.

The convergence of the ACMS solution to the FEM solution is also verified in  \Cref{tab:convergence_edge_modes_L2_wavenumbers}, where the corresponding errors are shown for different wavenumbers.
We observe a similar convergence behavior as for $e_1$ and $e_0$, respectively, without a saturation effect.
If one is satisfied with the approximation errors achieved by $u_{\rm FEM}$, we may again conclude that the ACMS method can yield good approximations already with a moderate number of degrees of freedom. We note that, by using higher order elements or further mesh refinements, the accuracy of $u_{\rm FEM}$ may be increased; see, e.g.,~\cite{MelenkSauter2010, MelenkSauter2011}. Then, we would expect that the corresponding solution of the ACMS method would also show better accuracy in approximating the exact solution. We will investigate this in future work.

\begin{table}[t]
	\centering\small\setlength\tabcolsep{0.5em}
	\caption{Classical Helmholtz example with higher wavenumbers: errors $e_{0}$ (top) and $e_{1}$ (bottom) as defined in~\cref{tab:errors} for different number of edge modes $|S_\Gamma|\in\{48,96,\ldots,1536\}$ and wavenumbers $\kappa$, computed on a finite element mesh with $h\approx 3.4{\cdot}10^{-4}$ and $8\,327\,169$ vertices. For comparison, we display the interpolation error for the nodal interpolant as well as the FEM approximation error $u_{\rm FEM}-u$ in the $H^1$- and $L^2$-norm, respectively.\label{tab:convergence_edge_modes_wavenumbers}}
	\begin{tabular}{r l l l l l l l l }
		\toprule
		         &                                     &                                     &                                                             \multicolumn{6}{c}{$|S_\Gamma|$}                                                             \\ \cline{4-9}
		$\kappa$ & \multicolumn{1}{c}{$e_{0,\rm int}$} & \multicolumn{1}{c}{$e_{0,\rm FEM}$} & \multicolumn{1}{c}{48} & \multicolumn{1}{c}{96} & \multicolumn{1}{c}{192} & \multicolumn{1}{c}{384} & \multicolumn{1}{c}{768} & \multicolumn{1}{c}{1536} \\ \hline
		       2 & $1.9{\cdot} 10^{-7}$                & $4.4{\cdot} 10^{-7}$                & $2.2{\cdot}10^{-3}$    & $3.5{\cdot}10^{-4}$    & $4.9{\cdot}10^{-5}$     & $6.6{\cdot}10^{-6}$     & $9.7{\cdot}10^{-7}$     & $4.5{\cdot}10^{-7}$      \\
		       4 & $7.6{\cdot} 10^{-7}$                & $3.5{\cdot} 10^{-6}$                & $1.3{\cdot}10^{-2}$    & $1.6{\cdot}10^{-3}$    & $2.1{\cdot}10^{-4}$     & $2.7{\cdot}10^{-5}$     & $4.9{\cdot}10^{-6}$     & $3.5{\cdot}10^{-6}$      \\
		       8 & $3.0{\cdot} 10^{-6}$                & $4.2{\cdot} 10^{-5}$                & $1.5{\cdot}10^{-1}$    & $7.8{\cdot}10^{-3}$    & $8.5{\cdot}10^{-4}$     & $1.1{\cdot}10^{-4}$     & $4.5{\cdot}10^{-5}$     & $4.2{\cdot}10^{-5}$      \\
		      16 & $1.2{\cdot} 10^{-5}$                & $4.2{\cdot} 10^{-4}$                & $1.2{\cdot}10^{ 0}$    & $1.6{\cdot}10^{-1}$    & $6.0{\cdot}10^{-3}$     & $7.2{\cdot}10^{-4}$     & $4.3{\cdot}10^{-4}$     & $4.2{\cdot}10^{-4}$      \\
		      32 & $4.8{\cdot} 10^{-5}$                & $4.5{\cdot} 10^{-3}$                & $4.9{\cdot}10^{0}$     & $2.2{\cdot}10^{0}$     & $3.5{\cdot}10^{-1}$     & $1.0{\cdot}10^{-2}$     & $4.7{\cdot}10^{-3}$     & $4.5{\cdot}10^{-3}$      \\
		      64 & $1.9{\cdot} 10^{-4}$                & $1.7{\cdot} 10^{-2}$                & $2.4{\cdot}10^{0 }$    & $2.0{\cdot}10^{0 }$    & $2.7{\cdot}10^{0 }$     & $8.7{\cdot}10^{-2}$     & $1.8{\cdot}10^{-2}$     & $1.7{\cdot}10^{-2}$      \\
		     128 & $7.7{\cdot} 10^{-4}$                & $4.3{\cdot} 10^{-1}$                & $3.4{\cdot}10^{ 0}$    & $2.0{\cdot}10^{ 0}$    & $4.4{\cdot}10^{ 0}$     & $9.9{\cdot}10^{ 0}$     & $4.8{\cdot}10^{-1}$     & $4.3{\cdot}10^{-1}$      \\ \bottomrule\toprule
		$\kappa$ & \multicolumn{1}{c}{$e_{1,\rm int}$} & \multicolumn{1}{c}{$e_{1,\rm FEM}$} & \multicolumn{1}{c}{48} & \multicolumn{1}{c}{96} & \multicolumn{1}{c}{192} & \multicolumn{1}{c}{384} & \multicolumn{1}{c}{768} & \multicolumn{1}{c}{1536} \\ \hline
		       2 & $1.0{\cdot} 10^{-3}$                & $1.0{\cdot} 10^{-3}$                & $4.9{\cdot}10^{-2}$    & $1.3{\cdot}10^{-2}$    & $3.8{\cdot}10^{-3}$     & $1.4{\cdot}10^{-3}$     & $1.1{\cdot}10^{-3}$     & $1.0{\cdot}10^{-3}$      \\
		       4 & $4.3{\cdot} 10^{-3}$                & $4.3{\cdot} 10^{-3}$                & $2.2{\cdot}10^{-1}$    & $5.6{\cdot}10^{-2}$    & $1.4{\cdot}10^{-2}$     & $5.6{\cdot}10^{-3}$     & $4.4{\cdot}10^{-3}$     & $4.3{\cdot}10^{-3}$      \\
		       8 & $1.7{\cdot} 10^{-2}$                & $1.7{\cdot} 10^{-2}$                & $1.8{\cdot}10^{0 }$    & $2.3{\cdot}10^{-1}$    & $5.8{\cdot}10^{-2}$     & $2.2{\cdot}10^{-2}$     & $1.7{\cdot}10^{-2}$     & $1.7{\cdot}10^{-2}$      \\
		      16 & $6.9{\cdot} 10^{-2}$                & $6.9{\cdot} 10^{-2}$                & $1.9{\cdot}10^{1 }$    & $3.0{\cdot}10^{0 }$    & $2.5{\cdot}10^{-1}$     & $8.9{\cdot}10^{-2}$     & $7.1{\cdot}10^{-2}$     & $6.9{\cdot}10^{-2}$      \\
		      32 & $2.7{\cdot} 10^{-1}$                & $3.1{\cdot} 10^{-1}$                & $1.5{\cdot}10^{2}$     & $7.2{\cdot}10^{1}$     & $1.1{\cdot}10^{1}$      & $4.9{\cdot}10^{-1}$     & $3.2{\cdot}10^{-1}$     & $3.1{\cdot}10^{-1}$      \\
		      64 & $1.1{\cdot} 10^{0}$                 & $1.5{\cdot} 10^{0}$                 & $1.5{\cdot}10^{2}$     & $1.3{\cdot}10^{2}$     & $1.7{\cdot}10^{2}$      & $6.1{\cdot}10^{0}$      & $1.6{\cdot}10^{0}$      & $1.5{\cdot}10^{0}$       \\
		     128 & $4.4{\cdot} 10^{0}$                 & $5.5{\cdot} 10^{1}$                 & $4.4{\cdot}10^{2}$     & $2.5{\cdot}10^{2}$     & $5.7{\cdot}10^{2}$      & $1.2{\cdot}10^{3}$      & $6.2{\cdot}10^{1}$      & $5.5{\cdot}10^{1}$       \\ \bottomrule
	\end{tabular}
\end{table}

\begin{table}[t]
	\centering\small\setlength\tabcolsep{0.5em}
	\caption{Classical Helmholtz example with higher wavenumbers: errors $e_{0,h}$ (top) and $e_{1,h}$ (bottom) as defined in~\cref{tab:errors} for different number of edge modes $|S_\Gamma|\in\{48,96,\ldots,1536\}$ and wavenumbers $\kappa$, computed on a finite element mesh with $h\approx 3.4{\cdot}10^{-4}$ and $8\,327\,169$ vertices. For comparison, we display the interpolation error for the nodal interpolant as well as the FEM approximation error $u_{\rm FEM}-u$ in the $H^1$- and $L^2$-norm, respectively. \label{tab:convergence_edge_modes_L2_wavenumbers}}
	\begin{tabular}{r l l l l l l l l}
		\toprule
      &&& \multicolumn{6}{c}{$|S_\Gamma|$}\\
        \cline{4-9}
		$\kappa$ & \multicolumn{1}{c}{$e_{0,\rm int}$} & \multicolumn{1}{c}{$e_{0,\rm FEM}$} & \multicolumn{1}{c}{48} & \multicolumn{1}{c}{96} & \multicolumn{1}{c}{192} & \multicolumn{1}{c}{384} & \multicolumn{1}{c}{768} & \multicolumn{1}{c}{1536} \\ \hline
		       2 & $1.9{\cdot} 10^{-7}$                & $4.4{\cdot} 10^{-7}$                & $2.2{\cdot} 10^{-3}$   & $3.5{\cdot} 10^{-4}$   & $4.9{\cdot} 10^{-5}$    & $6.6{\cdot} 10^{-6}$    & $8.5{\cdot} 10^{-7}$    & $1.0{\cdot} 10^{-7}$     \\
		       4 & $7.6{\cdot} 10^{-7}$                & $3.5{\cdot} 10^{-6}$                & $1.3{\cdot} 10^{-2}$   & $1.6{\cdot} 10^{-3}$   & $2.1{\cdot} 10^{-4}$    & $2.6{\cdot} 10^{-5}$    & $3.3{\cdot} 10^{-6}$    & $4.2{\cdot} 10^{-7}$     \\
		       8 & $3.0{\cdot} 10^{-6}$                & $4.2{\cdot} 10^{-5}$                & $1.5{\cdot} 10^{-1}$   & $7.8{\cdot} 10^{-3}$   & $8.5{\cdot} 10^{-4}$    & $1.0{\cdot} 10^{-4}$    & $1.3{\cdot} 10^{-5}$    & $1.6{\cdot} 10^{-6}$     \\
		      16 & $1.2{\cdot} 10^{-5}$                & $4.2{\cdot} 10^{-4}$                & $1.2{\cdot} 10^{0}$    & $1.6{\cdot} 10^{-1}$   & $5.8{\cdot} 10^{-3}$    & $5.0{\cdot} 10^{-4}$    & $5.4{\cdot} 10^{-5}$    & $6.6{\cdot} 10^{-6}$     \\
		      32 & $4.8{\cdot} 10^{-5}$                & $4.5{\cdot} 10^{-3}$                & $4.9{\cdot} 10^{0}$    & $2.2{\cdot} 10^{0}$    & $3.4{\cdot} 10^{-1}$    & $7.2{\cdot} 10^{-3}$    & $4.3{\cdot} 10^{-4}$    & $3.4{\cdot} 10^{-5}$     \\
		      64 & $1.9{\cdot} 10^{-4}$                & $1.7{\cdot} 10^{-2}$                & $2.4{\cdot} 10^{0}$    & $2.0{\cdot} 10^{0}$    & $2.7{\cdot} 10^{0}$     & $8.0{\cdot} 10^{-2}$    & $2.2{\cdot} 10^{-3}$    & $1.5{\cdot} 10^{-4}$     \\
		     128 & $7.7{\cdot} 10^{-4}$                & $4.3{\cdot} 10^{-1}$                & $3.5{\cdot} 10^{0}$    & $2.0{\cdot} 10^{0}$    & $4.5{\cdot} 10^{0}$     & $9.9{\cdot} 10^{0}$     & $2.4{\cdot} 10^{-1}$    & $4.1{\cdot} 10^{-3}$     \\ \bottomrule\toprule
		$\kappa$ & \multicolumn{1}{c}{$e_{1,\rm int}$} & \multicolumn{1}{c}{$e_{1,\rm FEM}$} & \multicolumn{1}{c}{48} & \multicolumn{1}{c}{96} & \multicolumn{1}{c}{192} & \multicolumn{1}{c}{384} & \multicolumn{1}{c}{768} & \multicolumn{1}{c}{1536} \\ \hline
		       2 & $1.0{\cdot} 10^{-3}$                & $1.0{\cdot} 10^{-3}$                & $4.9{\cdot} 10^{-2}$   & $1.3{\cdot} 10^{-2}$   & $3.6{\cdot} 10^{-3}$    & $9.4{\cdot} 10^{-4}$    & $2.4{\cdot} 10^{-4}$    & $6.0{\cdot} 10^{-5}$     \\
		       4 & $4.3{\cdot} 10^{-3}$                & $4.3{\cdot} 10^{-3}$                & $2.2{\cdot} 10^{-1}$   & $5.6{\cdot} 10^{-2}$   & $1.4{\cdot} 10^{-2}$    & $3.6{\cdot} 10^{-3}$    & $9.0{\cdot} 10^{-4}$    & $2.2{\cdot} 10^{-4}$     \\
		       8 & $1.7{\cdot} 10^{-2}$                & $1.7{\cdot} 10^{-2}$                & $1.8{\cdot} 10^{0}$    & $2.3{\cdot} 10^{-1}$   & $5.6{\cdot} 10^{-2}$    & $1.4{\cdot} 10^{-2}$    & $3.5{\cdot} 10^{-3}$    & $8.9{\cdot} 10^{-4}$     \\
		      16 & $6.9{\cdot} 10^{-2}$                & $6.9{\cdot} 10^{-2}$                & $1.9{\cdot} 10^{1}$    & $3.0{\cdot} 10^{0}$    & $2.4{\cdot} 10^{-1}$    & $5.5{\cdot} 10^{-2}$    & $1.4{\cdot} 10^{-2}$    & $3.5{\cdot} 10^{-3}$     \\
		      32 & $2.7{\cdot} 10^{-1}$                & $3.1{\cdot} 10^{-1}$                & $1.5{\cdot} 10^{2}$    & $7.2{\cdot} 10^{1}$    & $1.1{\cdot} 10^{1}$     & $3.4{\cdot} 10^{-1}$    & $5.9{\cdot} 10^{-2}$    & $1.4{\cdot} 10^{-2}$     \\
		      64 & $1.1{\cdot} 10^{0}$                 & $1.5{\cdot} 10^{0}$                 & $1.5{\cdot} 10^{2}$    & $1.3{\cdot} 10^{2}$    & $1.7{\cdot} 10^{2}$     & $5.6{\cdot} 10^{0}$     & $2.8{\cdot} 10^{-1}$    & $5.8{\cdot} 10^{-2}$     \\
		     128 & $4.4{\cdot} 10^{0}$                 & $5.5{\cdot} 10^{1}$                 & $4.4{\cdot} 10^{2}$    & $2.6{\cdot} 10^{2}$    & $5.7{\cdot} 10^{2}$     & $1.2{\cdot} 10^{3}$     & $3.1{\cdot} 10^{1}$     & $5.9{\cdot} 10^{-1}$     \\ \bottomrule
	\end{tabular}
\end{table}

\subsection{Localized interior source}
In certain applications, such as in geophysics \cite{Wang2011},
the source terms of the wave propagation are localized. Therefore, let us study the behavior of the ACMS method for this type of setup. Let us consider $\Omega=B_1(0)$ with $\Gamma_R = \partial\Omega$, the coefficients $a(x)=1$ and $\beta(x) =1$, 
the non-zero source function
$f(x) = \exp(-200 | x- x_c|^2)$,
with $x_c=(\myfrac{1}{3},\myfrac{1}{3})$, $\kappa=1$, and $g \equiv 0$.
For this example, no analytical expression of the solution $u$ is available. Thus, we only investigate the convergence of the ACMS method towards the solution of the underlying finite element method, which is justified by the discussion in the previous sections.

\begin{table}[t]
	\centering\small \setlength\tabcolsep{0.55em}
	\caption{Example 5.2 with a localized interior source. Error 
		$e_{0,h}$ (top) and $e_{1,h}$ (bottom)
		as defined in~\cref{tab:errors} for different number of edge modes $|S_\Gamma|\in\{24,28,\ldots,768\}$ (in columns) and bubble functions $|S_B|$ (in rows) for $h=2.8{\cdot}10^{-3}$. 
		\label{tab:convergence_interior_source}}
	\begin{tabular}{r c c c c c c c}
		\toprule
        & \multicolumn{7}{c}{$|S_\Gamma|$}\\
        \cline{2-8}
		$|S_B|$ & \multicolumn{1}{c}{24} & \multicolumn{1}{c}{48} & \multicolumn{1}{c}{96} & \multicolumn{1}{c}{192} & \multicolumn{1}{c}{384} & \multicolumn{1}{c}{768} & \multicolumn{1}{c}{1536} \\ \hline
		      2 & $5.4{\cdot} 10^{-2}$   & $4.8{\cdot} 10^{-2}$   & $4.7{\cdot} 10^{-2}$   & $4.7{\cdot} 10^{-2}$    & $4.7{\cdot} 10^{-2}$    & $4.7{\cdot} 10^{-2}$    & $4.7{\cdot} 10^{-2}$     \\
		      4 & $2.9{\cdot} 10^{-2}$   & $2.6{\cdot} 10^{-2}$   & $2.6{\cdot} 10^{-2}$   & $2.6{\cdot} 10^{-2}$    & $2.6{\cdot} 10^{-2}$    & $2.6{\cdot} 10^{-2}$    & $2.6{\cdot} 10^{-2}$     \\
		      8 & $2.5{\cdot} 10^{-2}$   & $1.7{\cdot} 10^{-2}$   & $1.7{\cdot} 10^{-2}$   & $1.7{\cdot} 10^{-2}$    & $1.7{\cdot} 10^{-2}$    & $1.7{\cdot} 10^{-2}$    & $1.7{\cdot} 10^{-2}$     \\
		     16 & $2.1{\cdot} 10^{-2}$   & $9.9{\cdot} 10^{-3}$   & $9.5{\cdot} 10^{-3}$   & $9.5{\cdot} 10^{-3}$    & $9.5{\cdot} 10^{-3}$    & $9.5{\cdot} 10^{-3}$    & $9.5{\cdot} 10^{-3}$     \\
		     32 & $1.9{\cdot} 10^{-2}$   & $4.4{\cdot} 10^{-3}$   & $3.2{\cdot} 10^{-3}$   & $3.2{\cdot} 10^{-3}$    & $3.2{\cdot} 10^{-3}$    & $3.2{\cdot} 10^{-3}$    & $3.2{\cdot} 10^{-3}$     \\
		     64 & $1.9{\cdot} 10^{-2}$   & $2.8{\cdot} 10^{-3}$   & $6.2{\cdot} 10^{-4}$   & $6.1{\cdot} 10^{-4}$    & $6.1{\cdot} 10^{-4}$    & $6.1{\cdot} 10^{-4}$    & $6.1{\cdot} 10^{-4}$     \\
		    128 & $1.9{\cdot} 10^{-2}$   & $2.8{\cdot} 10^{-3}$   & $1.5{\cdot} 10^{-4}$   & $4.3{\cdot} 10^{-5}$    & $4.2{\cdot} 10^{-5}$    & $4.2{\cdot} 10^{-5}$    & $4.2{\cdot} 10^{-5}$     \\
		    256 & $1.9{\cdot} 10^{-2}$   & $2.8{\cdot} 10^{-3}$   & $1.4{\cdot} 10^{-4}$   & $1.0{\cdot} 10^{-5}$    & $1.4{\cdot} 10^{-6}$    & $3.6{\cdot} 10^{-7}$    & $3.1{\cdot} 10^{-7}$     \\
		    512 & $1.9{\cdot} 10^{-2}$   & $2.8{\cdot} 10^{-3}$   & $1.4{\cdot} 10^{-4}$   & $1.0{\cdot} 10^{-5}$    & $1.4{\cdot} 10^{-6}$    & $1.8{\cdot} 10^{-7}$    & $4.1{\cdot} 10^{-8}$     \\
		   1024 & $1.9{\cdot} 10^{-2}$   & $2.8{\cdot} 10^{-3}$   & $1.4{\cdot} 10^{-4}$   & $1.0{\cdot} 10^{-5}$    & $1.4{\cdot} 10^{-6}$    & $1.8{\cdot} 10^{-7}$    & $3.1{\cdot} 10^{-8}$     \\ \bottomrule\toprule
		$|S_B|$ & \multicolumn{1}{c}{24} & \multicolumn{1}{c}{48} & \multicolumn{1}{c}{96} & \multicolumn{1}{c}{192} & \multicolumn{1}{c}{384} & \multicolumn{1}{c}{768} & \multicolumn{1}{c}{1536} \\ \hline
		      2 & $3.6{\cdot} 10^{-1}$   & $3.4{\cdot} 10^{-1}$   & $3.4{\cdot} 10^{-1}$   & $3.4{\cdot} 10^{-1}$    & $3.4{\cdot} 10^{-1}$    & $3.4{\cdot} 10^{-1}$    & $3.4{\cdot} 10^{-1}$     \\
		      4 & $2.6{\cdot} 10^{-1}$   & $2.4{\cdot} 10^{-1}$   & $2.4{\cdot} 10^{-1}$   & $2.4{\cdot} 10^{-1}$    & $2.4{\cdot} 10^{-1}$    & $2.4{\cdot} 10^{-1}$    & $2.4{\cdot} 10^{-1}$     \\
		      8 & $2.1{\cdot} 10^{-1}$   & $1.9{\cdot} 10^{-1}$   & $1.8{\cdot} 10^{-1}$   & $1.8{\cdot} 10^{-1}$    & $1.8{\cdot} 10^{-1}$    & $1.8{\cdot} 10^{-1}$    & $1.8{\cdot} 10^{-1}$     \\
		     16 & $1.6{\cdot} 10^{-1}$   & $1.2{\cdot} 10^{-1}$   & $1.2{\cdot} 10^{-1}$   & $1.2{\cdot} 10^{-1}$    & $1.2{\cdot} 10^{-1}$    & $1.2{\cdot} 10^{-1}$    & $1.2{\cdot} 10^{-1}$     \\
		     32 & $1.1{\cdot} 10^{-1}$   & $5.9{\cdot} 10^{-2}$   & $5.4{\cdot} 10^{-2}$   & $5.4{\cdot} 10^{-2}$    & $5.4{\cdot} 10^{-2}$    & $5.4{\cdot} 10^{-2}$    & $5.4{\cdot} 10^{-2}$     \\
		     64 & $1.0{\cdot} 10^{-1}$   & $2.7{\cdot} 10^{-2}$   & $1.3{\cdot} 10^{-2}$   & $1.3{\cdot} 10^{-2}$    & $1.3{\cdot} 10^{-2}$    & $1.3{\cdot} 10^{-2}$    & $1.3{\cdot} 10^{-2}$     \\
		    128 & $1.0{\cdot} 10^{-1}$   & $2.3{\cdot} 10^{-2}$   & $2.5{\cdot} 10^{-3}$   & $1.2{\cdot} 10^{-3}$    & $1.2{\cdot} 10^{-3}$    & $1.2{\cdot} 10^{-3}$    & $1.2{\cdot} 10^{-3}$     \\
		    256 & $1.0{\cdot} 10^{-1}$   & $2.3{\cdot} 10^{-2}$   & $2.2{\cdot} 10^{-3}$   & $3.6{\cdot} 10^{-4}$    & $9.6{\cdot} 10^{-5}$    & $2.8{\cdot} 10^{-5}$    & $1.5{\cdot} 10^{-5}$     \\
		    512 & $1.0{\cdot} 10^{-1}$   & $2.3{\cdot} 10^{-2}$   & $2.2{\cdot} 10^{-3}$   & $3.6{\cdot} 10^{-4}$    & $9.6{\cdot} 10^{-5}$    & $2.5{\cdot} 10^{-5}$    & $8.1{\cdot} 10^{-6}$     \\
		   1024 & $1.0{\cdot} 10^{-1}$   & $2.3{\cdot} 10^{-2}$   & $2.2{\cdot} 10^{-3}$   & $3.6{\cdot} 10^{-4}$    & $9.5{\cdot} 10^{-5}$    & $2.5{\cdot} 10^{-5}$    & $7.9{\cdot} 10^{-6}$     \\ \bottomrule
	\end{tabular}
\end{table}

\cref{tab:convergence_interior_source} shows the results for varying numbers of bubble functions and edge modes. In view of~\cref{thm:bubble_convergence_rate}, we note that $\uB \approx 0$ outside the subdomain $\Oj$ with $x_c\in\Oj$. Hence, for an accurate approximation, we only need bubble functions in that subdomains $\Oj$.
Moreover, the linear system for the bubble component is diagonal and decoupled from the interface part. Consequently, the solution coefficient corresponding to a specific bubble basis function can be computed independently from any other ACMS basis function; cf.~\cref{eq:bubble_approx,eq:interface_approx}.

For this highly localized source, we observe very fast convergence to the finite element solution when $I_j\geq 32$, given a sufficiently high number of edge modes.
This can be explained by inspecting the proof of \cref{thm:bubble_convergence_rate}: using repeatedly~\cref{eq:bubbles}, integration-by-parts and that $f$ and all its derivative are negligible on $\partial\Oj$, we can estimate $(f, b_i^j)_{\Oj}$ by arbitrary powers of $\lambda_i^j$; see also~\cref{lem:improved_rate_interface} for similar steps. For a high number of bubble functions, a saturation effect occurs. This can be due to the limited quantity of edge modes or to the fact that the values of higher-order derivatives of $f$ are not negligible anymore.
When increasing the number of selected edge modes while keeping the number of bubble functions fixed, the convergence approaches the predicted quadratic and cubic rates for the $H^1$- and the $L^2$-error, respectively.
Since the Galerkin problems for the bubble part and the interface are decoupled (see~\cref{eq:bubble,eq:interface}), the size of the systems to be solved remains comparably small. 
Therefore, we conclude that, for a moderate number of degrees of freedom, we obtain very good approximations of the finite element solution.

\subsection{Localized boundary source} \label{sec:local_source}
Next, let us modify the classical Helmholtz example from \cref{sec:classical_Helmholtz_example} such that $\kappa=16$, $f=0$, and 
$g(x)=\exp(-200|x-x_c|^2)$, with $x_c=(-\myfrac{1}{\sqrt{2}},\myfrac{1}{\sqrt{2}})$. 
In~\cref{fig:solution_bdry_source}, we display the real and imaginary part of the finite element solution computed on a quasi-uniform mesh with $521\,217$ vertices, i.e., $h\approx 1.3\cdot 10^{-3}$. Although the source is very localized around $x_c$, the whole domain is excited, indicating the wave-type behavior of the solution.

\begin{figure}
    \centering
    \includegraphics[width=0.45\textwidth]{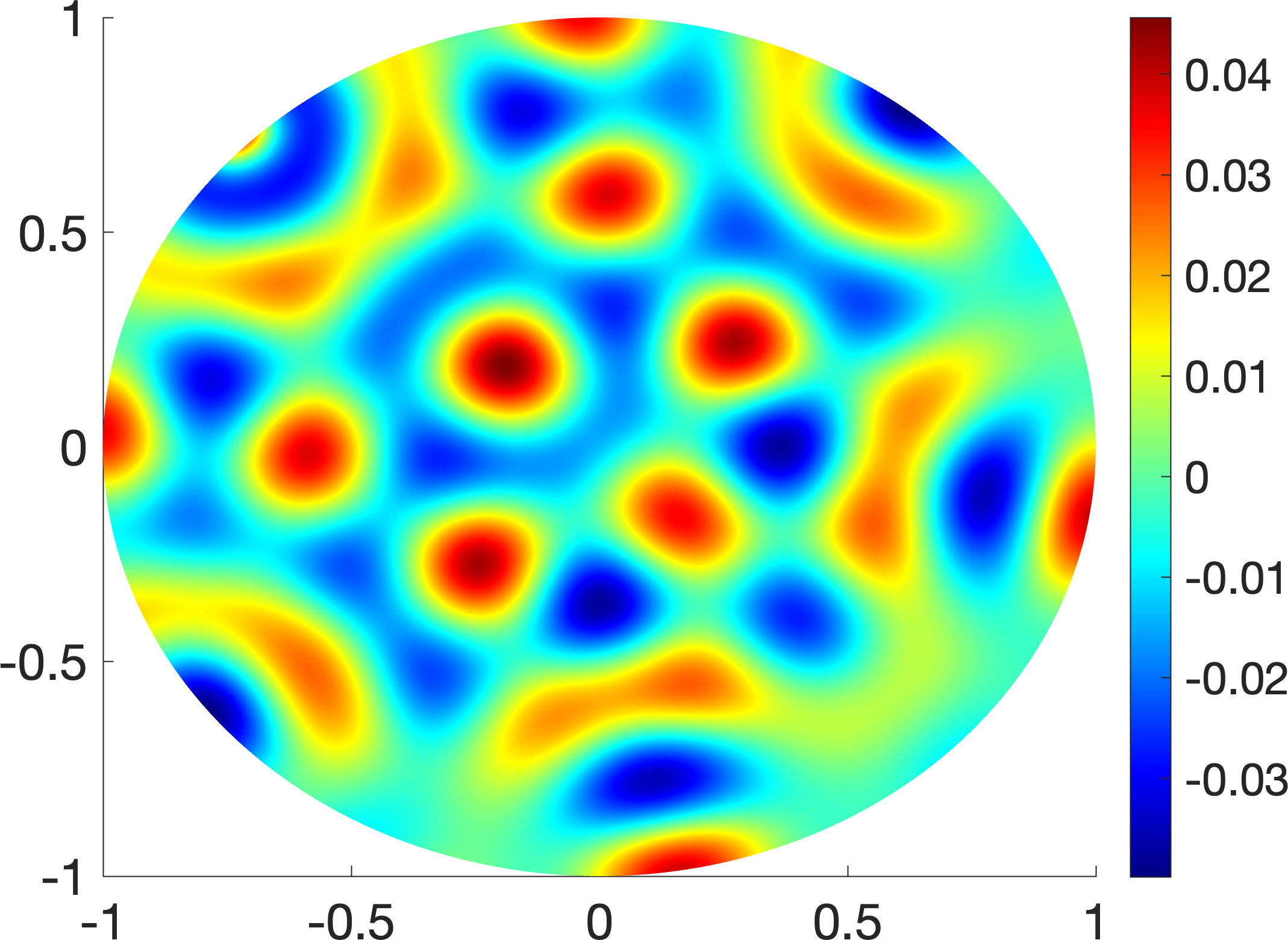}
	\hfill
    \includegraphics[width=0.45\textwidth]{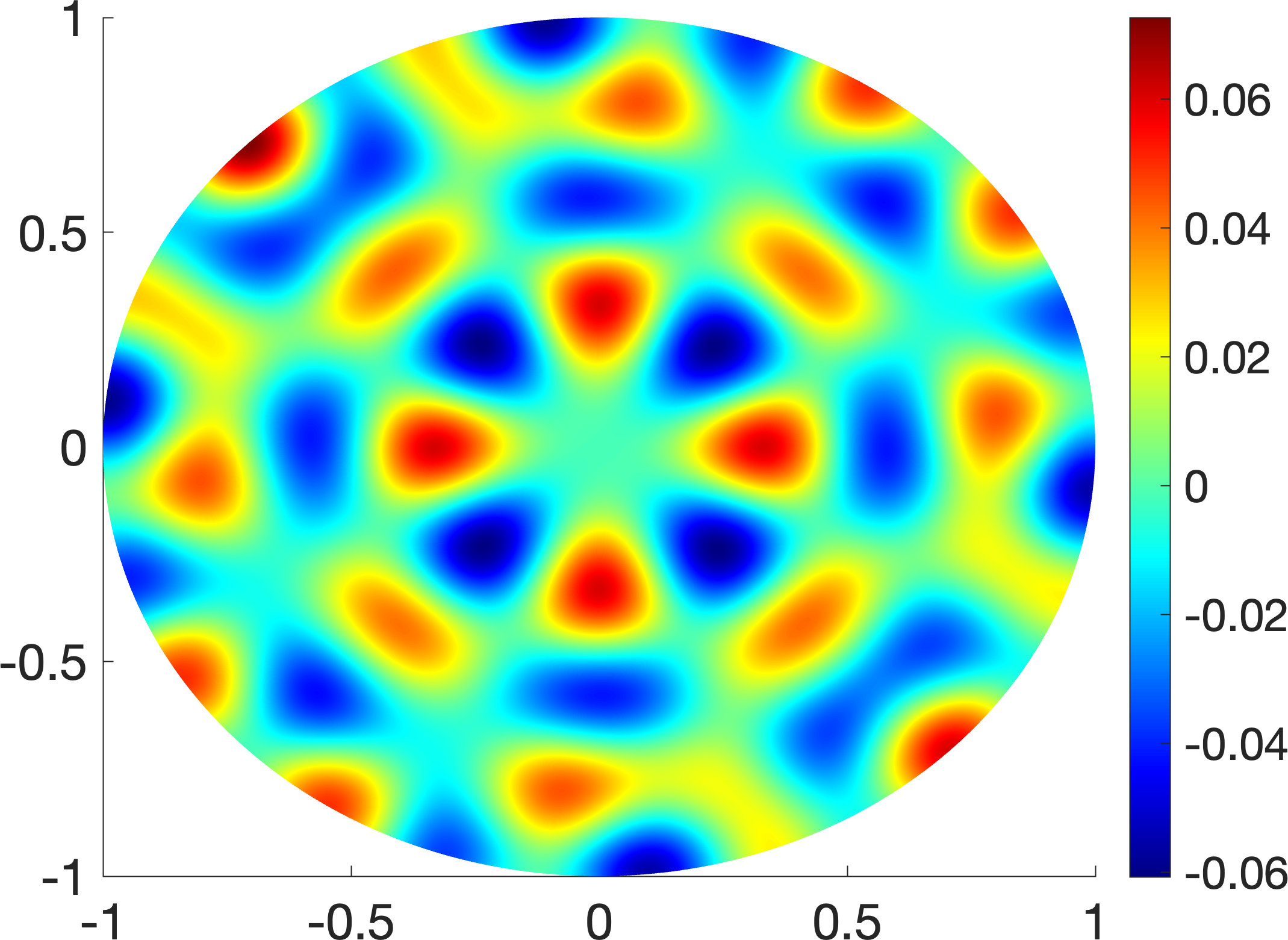}
    \caption{Finite element solution of the model problem with a localized boundary source from~\cref{sec:local_source}: real part (left) and imaginary part (right).
    \label{fig:solution_bdry_source}}
\end{figure}

\begin{table}[t]
	\centering\small \setlength\tabcolsep{0.55em}
	\caption{Example 5.4: Localized boundary source: relative errors $e_{0,h}^r$ and $e_{1,h}^r$ as defined in \cref{tab:errors} for different number of edge modes $|S_\Gamma|\in\{24,28,\ldots,3072\}$, computed on a grid with $h=1.3{\cdot} 10^{-3}$. \label{tab:convergence_edge_modes_loc_bdry_source}}
	\begin{tabular}{r r r r r r r r r}
		\toprule
        & \multicolumn{8}{c}{$|S_\Gamma|$}\\
        \cline{2-9}
		            & \multicolumn{1}{c}{24} & \multicolumn{1}{c}{48} & \multicolumn{1}{c}{96} & \multicolumn{1}{c}{192} & \multicolumn{1}{c}{384} & \multicolumn{1}{c}{768} & \multicolumn{1}{c}{1536} & \multicolumn{1}{c}{3072} \\ \hline
		$e_{0,h}^r$ & $1.6{\cdot} 10^{0}$    & $1.0{\cdot} 10^{0}$    & $1.7{\cdot} 10^{-1}$   & $7.3{\cdot} 10^{-3}$    & $4.1{\cdot} 10^{-4}$    & $3.3{\cdot} 10^{-5}$    & $3.5{\cdot} 10^{-6}$     & $4.3{\cdot} 10^{-7}$     \\
		$e_{1,h}^r$ & $1.6{\cdot} 10^{0}$    & $1.0{\cdot} 10^{0}$    & $1.8{\cdot} 10^{-1}$   & $1.4{\cdot} 10^{-2}$    & $1.9{\cdot} 10^{-3}$    & $4.6{\cdot} 10^{-4}$    & $1.1{\cdot} 10^{-4}$     & $2.8{\cdot} 10^{-5}$     \\ \bottomrule
	\end{tabular}
\end{table}


Since $f=0$ here, we do not need any bubble functions to obtain convergence according to \cref{rem:local_bubble}. As we do not have an analytic solution, we investigate the error between $u_{\rm FEM}$ and $\uh$ for varying number of edge modes.
The results are shown in~\cref{tab:convergence_edge_modes_loc_bdry_source}, where we consider the relative errors since the solution values are rather small.
As predicted by~\cref{thm:estimate_sesP_norm}, we observe second order convergence for the $H^1$-error, while the decay is initially slightly faster. The $L^2$-error shows a similar behavior with a convergence rate approaching third order; cf.~\cref{thm:convergence_interface_L2}.
We again may conclude that we can approximate the highly resolved standard finite element solution using a moderate number of edge modes in the ACMS method.

\subsection{Periodic structure} \label{sec:periodic}
Let us conclude our numerical experiments with a heterogeneous Helmholtz problem on the unit square $\Omega=[0,1]^2$ with a domain decomposition as in~\cref{fig:crystal} (left) and with a heterogeneous diffusion coefficient $a$ depicted in~\cref{fig:crystal} (right). This configuration is similar to the modeling of two-dimensional photonic crystals~\cite{Joannopoulos2008MoldingLight}. 
We choose $\beta(x) =1$, interior source $f=0$, and the localized boundary source on $\Gamma_R = \partial \Omega$, 
$g(x)=\exp(-\iota k\cdot x)\exp(-100|x-x_c|^2)$,
with $x_c=(0,\myfrac{1}{2})$, wave vector $k=\kappa (1,0)$ and wavenumber $\kappa=100$.
The corresponding finite element solution, which has been computed on a quasi-uniform triangulation with $5\,330\,337$ vertices, is shown in~\cref{fig:solution_crystal}.

\begin{figure}
    \centering
     \includegraphics[width=0.45\textwidth]{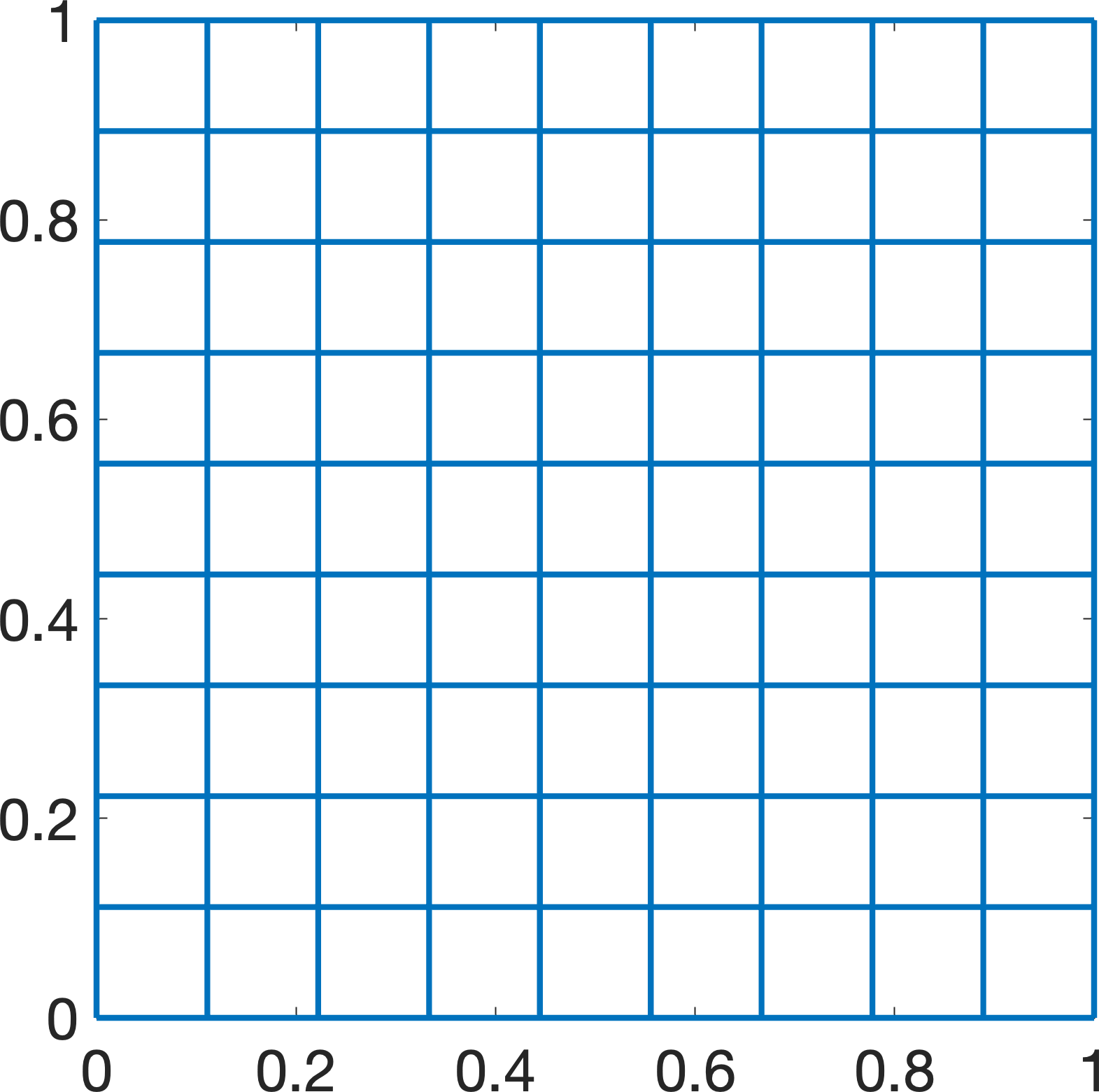}
	\hfill
   \includegraphics[width=0.45\textwidth]{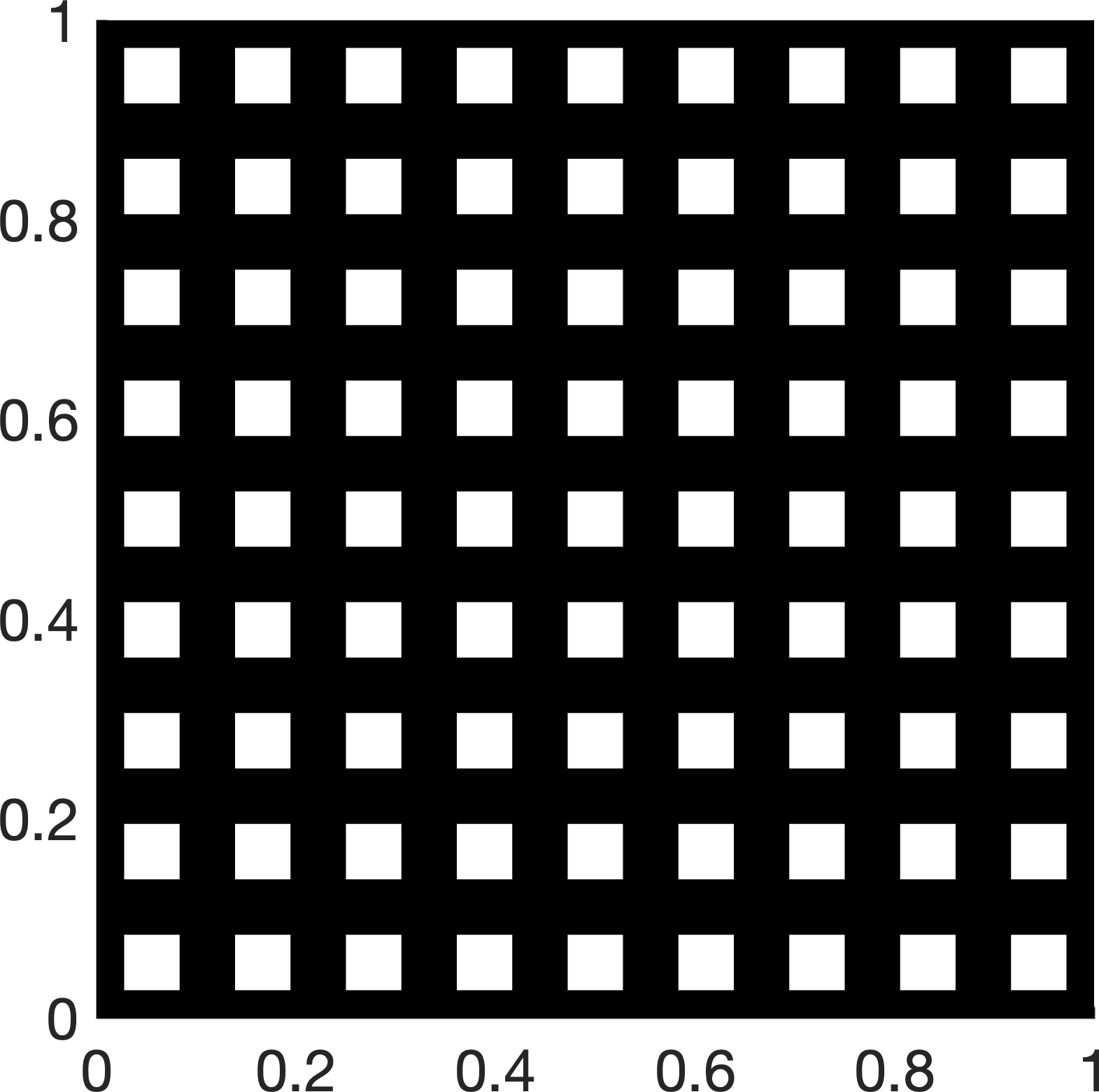}
    \caption{\textbf{Left:} unit square divided in $81$ subdomains for domain decomposition. The number of edges in $\E$ is $180$ and the number of vertices in $\V$ is $100$. \textbf{Right:} heterogeneous diffusion coefficient with $a=1$ in the black regions and $a=12$ in the white regions of the unit square. \label{fig:crystal}}
\end{figure}
\begin{figure}
    \centering
     \includegraphics[width=0.45\textwidth]{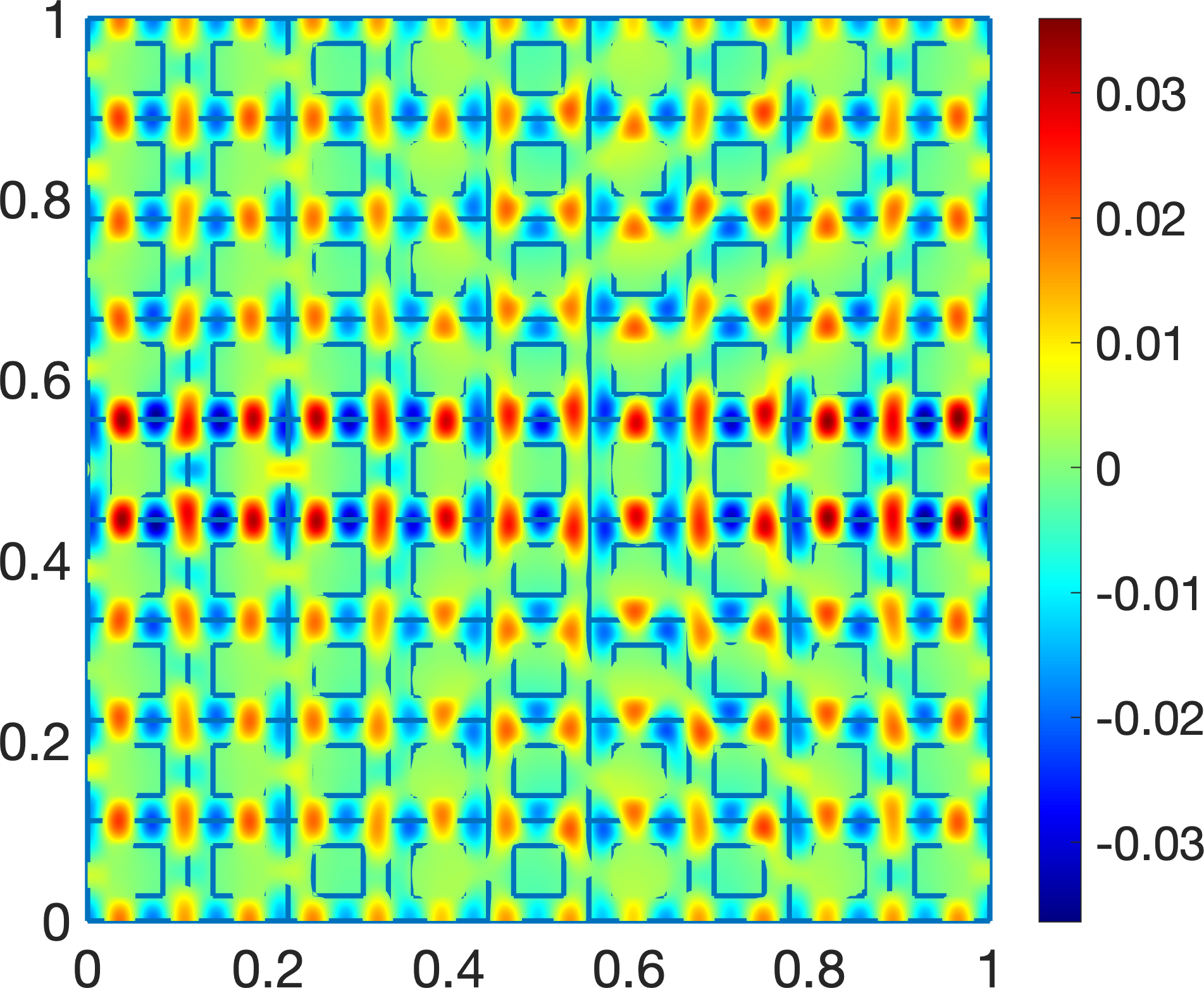}
     \hfill
    \includegraphics[width=0.45\textwidth]{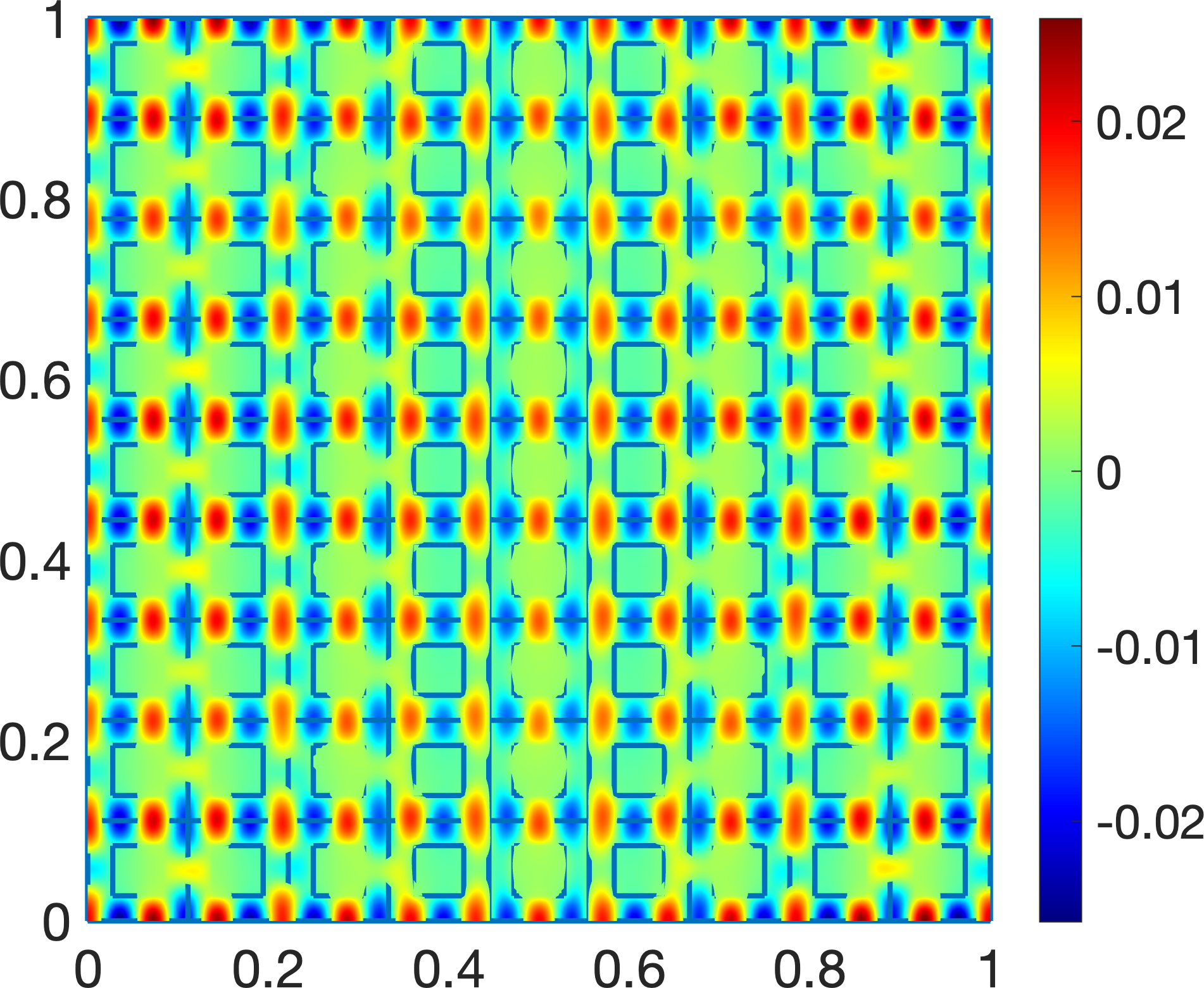}
    \caption{Finite element solution of the model problem with a localized boundary source from~\cref{sec:periodic}: real part (left) and imaginary part (right); the underlying structure of the coefficient $a$ is shown as well.
    \label{fig:solution_crystal}}
\end{figure}

\begin{table}[t]
	\centering\small \setlength\tabcolsep{0.55em}
	\caption{Example 5.4: Periodic structure. Relative errors $e_{0,h}^r$ and $e_{1,h}^r$ as defined in \cref{tab:errors} for different number of edge modes $|S_\Gamma|$ computed on a grid with $5\,330\,337$ vertices. \label{tab:convergence_periodic}}
	\begin{tabular}{r r r r r r r}
		\toprule
    & \multicolumn{6}{c}{$|S_\Gamma|$}\\
        \cline{2-7}
		            & \multicolumn{1}{c}{360} & \multicolumn{1}{c}{720} & \multicolumn{1}{c}{1\,440} & \multicolumn{1}{c}{2\,880} & \multicolumn{1}{c}{5\,760} & \multicolumn{1}{c}{11\,520} \\ 
\hline
		$e_{0,h}^r$ & $1.1{\cdot} 10^{0}$     & $1.3{\cdot} 10^{0}$     & $1.6{\cdot} 10^{-1}$     & $1.0{\cdot} 10^{-2}$     & $7.5{\cdot} 10^{-4}$     & $5.0{\cdot} 10^{-5}$        \\
		$e_{1,h}^r$ & $1.1{\cdot} 10^{0}$     & $1.3{\cdot} 10^{0}$     & $1.6{\cdot} 10^{-1}$     & $1.1{\cdot} 10^{-2}$     & $1.3{\cdot} 10^{-3}$     & $2.9{\cdot} 10^{-4}$        \\ \bottomrule
	\end{tabular}
\end{table}

Since, again, $f=0$, we only need to study the behavior for varying numbers of edge modes, while the bubble part of the solution $\uB$ is zero.
Note that we can choose $\Omega_\Gamma$ defined in \cref{eq:Omega_G} inside the region with $a=1$; cf.~\cref{fig:crystal}.
The relative $H^1$- and $L^2$-errors between the finite element solution and the ACMS solution are listed in~\cref{tab:convergence_periodic}. For $|S_\Gamma|=2\,880$, which corresponds to the case of $16$ modes per edge, the ACMS discretization already yields a good approximation to the highly resolved finite element solution. Moreover, the convergence of the error is initially even better than predicted by our theoretical results for $|S_\Gamma| \geq 720$.

\section*{Conclusions}
We extended the ACMS method, which has originally been developed for elliptic problems, to the heterogeneous Helmholtz equation.
This framework is based on a decomposition into local Helmholtz problems and an interface problem, that can be solved separately. The numerical approximation of those problems is achieved by using basis functions with local support and which can be constructed locally as well.

The error analysis developed for the investigated method is based on the abstract framework introduced in~\cite{GrahamSauter2019}.
We proved error estimates in the $H^1$- and the $L^2$-norm, and we obtained algebraic decay in the number of basis functions (modes) being used. 
In order to be able to apply the framework of~\cite{GrahamSauter2019}, we presented estimates for the adjoint approximability constants, showing that the number of edgemodes should scale essentially like $\omega^2$.
Moreover, we showed applicability of the method also if the diffusion coefficient is not smooth, as long as smoothness in a neighborhood of the interface is guaranteed.

Finally, we exemplified the theoretical error bounds by numerical experiments, which show the accuracy of the inspected ACMS method for moderate wavenumbers. We constructed the ACMS basis functions using a finite element method, and we observed that the resulting discrete ACMS method approximates well the corresponding finite element solution. Thus, we may conclude that the accuracy of the discrete ACMS method depends on the accuracy of the underlying finite element method, and smaller errors might be achieved by using high-order finite elements. 
\section*{Acknowledgments}
EG and MS acknowledge support by the Dutch Research council (NWO) via grant OCENW.GROOT.2019.071.
AH and MS acknowledge support by the 4TU.AMI SRI \textit{Bridging Numerical Analysis and Machine Learning}.

\bibliographystyle{siamplain}
\bibliography{references}

\begin{thebibliography}{10}

\bibitem{aarnes_multiscale_2002}
{\sc J.~r. Aarnes and T.~Y. Hou}, {\em Multiscale domain decomposition methods
  for elliptic problems with high aspect ratios}, Acta Mathematicae Applicatae
  Sinica. English Series, 18 (2002), pp.~63--76,
  \url{https://doi.org/10.1007/s102550200004}.

\bibitem{abdulle_heterogeneous_2012}
{\sc A.~Abdulle, W.~E, B.~Engquist, and E.~Vanden-Eijnden}, {\em The
  heterogeneous multiscale method}, Acta Numerica, 21 (2012), pp.~1--87,
  \url{https://doi.org/10.1017/S0962492912000025}.

\bibitem{Adams75}
{\sc R.~A. Adams}, {\em {S}obolev Spaces}, Pure and Applied Mathematics,
  Academic Press, New York San Francisco London, 1975.

\bibitem{altmann_numerical_2021}
{\sc R.~Altmann, P.~Henning, and D.~Peterseim}, {\em Numerical homogenization
  beyond scale separation}, Acta Numerica, 30 (2021), pp.~1--86,
  \url{https://doi.org/10.1017/S0962492921000015}.
\newblock Publisher: Cambridge University Press.

\bibitem{babuska_generalized_2004}
{\sc I.~Babu{\v s}ka, U.~Banerjee, and J.~E. Osborn}, {\em Generalized finite
  element methods --- main ideas, results and perspective}, International
  Journal of Computational Methods, 01 (2004), pp.~67--103,
  \url{https://doi.org/10.1142/S0219876204000083}.
\newblock Publisher: World Scientific Publishing Co.

\bibitem{BabuskaCalozOsborn1994}
{\sc I.~Babu{\v{s}}ka, G.~Caloz, and J.~E. Osborn}, {\em {Special Finite
  Element Methods for a Class of Second Order Elliptic Problems with Rough
  Coefficients}}, SIAM Journal on Numerical Analysis, 31 (1994), pp.~945--981,
  \url{https://doi.org/10.1137/0731051}.

\bibitem{BabuskaIhlenburgSauter1995}
{\sc I.~Babu{\v{s}}ka, F.~Ihlenburg, E.~T. Paik, and S.~A. Sauter}, {\em {A
  Generalized Finite Element Method for solving the Helmholtz equation in two
  dimensions with minimal pollution}}, Computer Methods in Applied Mechanics
  and Engineering, 128 (1995), pp.~325--359,
  \url{https://doi.org/10.1016/0045-7825(95)00890-X}.

\bibitem{BabuskaOsborn1983}
{\sc I.~Babu{\v{s}}ka and J.~E. Osborn}, {\em {Generalized Finite Element
  Methods: Their Performance and Their Relation to Mixed Methods}}, SIAM
  Journal on Numerical Analysis, 20 (1983), pp.~510--536,
  \url{https://doi.org/10.1137/0720034}.

\bibitem{BabuskaSauter1997}
{\sc I.~M. Babu{\v{s}}ka and S.~A. Sauter}, {\em {Is the Pollution Effect of
  the FEM Avoidable for the Helmholtz Equation Considering High Wave
  Numbers?}}, SIAM Journal on Numerical Analysis, 34 (1997), pp.~2392--2423,
  \url{https://doi.org/10.1137/S0036142994269186}.

\bibitem{bootland_geneo_2021}
{\sc N.~Bootland, V.~Dolean, I.~G. Graham, C.~Ma, and R.~Scheichl}, {\em
  {GenEO} coarse spaces for heterogeneous indefinite elliptic problems}, July
  2021, \url{https://doi.org/10.48550/arXiv.2103.16703}.
\newblock arXiv:2103.16703 [cs, math].

\bibitem{Bourgeois1991}
{\sc A.~Bourgeous, M.~Bourget, P.~Lailly, M.~Poulet, P.~Ricarte, and
  R.~Versteeg}, {\em Marmousi, model and data}, in Proceedings of the 1990 EEAG
  Workshop on Practical Aspects of Seismic Data Inversion, R.~Versteeg and
  G.~Grau, eds., Zeist, 1991, EAEG, pp.~5--16.

\bibitem{Bourquin1990}
{\sc F.~Bourquin}, {\em {Analysis and comparison of several component mode
  synthesis methods on one-dimensional domains}}, Numerische Mathematik, 58
  (1990), pp.~11--33, \url{https://doi.org/10.1007/BF01385608}.

\bibitem{Bourquin1992}
{\sc F.~Bourquin}, {\em {Component mode synthesis and eigenvalues of second
  order operators : discretization and algorithm}}, ESAIM: Mathematical
  Modelling and Numerical Analysis, 26 (1992), pp.~385--423,
  \url{https://doi.org/10.1051/m2an/1992260303851}.

\bibitem{BrownGallistlPeterseim2017}
{\sc D.~L. Brown, D.~Gallistl, and D.~Peterseim}, {\em Multiscale
  {P}etrov-{G}alerkin method for high-frequency heterogeneous {H}elmholtz
  equations}, in Meshfree methods for partial differential equations {VIII},
  vol.~115 of Lect. Notes Comput. Sci. Eng., Springer, Cham, 2017, pp.~85--115,
  \url{https://doi.org/10.1007/978-3-319-51954-8\_6}.

\bibitem{buck_multiscale_2013}
{\sc M.~Buck, O.~Iliev, and H.~Andr{\"a}}, {\em Multiscale finite element
  coarse spaces for the application to linear elasticity}, Open Mathematics, 11
  (2013), pp.~680--701, \url{https://doi.org/10.2478/s11533-012-0166-8}.
\newblock Publisher: De Gruyter Open Access.

\bibitem{ChaumontFreletNicaise2020}
{\sc T.~Chaumont-Frelet and S.~Nicaise}, {\em {Wavenumber explicit convergence
  analysis for finite element discretizations of general wave propagation
  problems}}, IMA Journal of Numerical Analysis, 40 (2020), pp.~1503--1543,
  \url{https://doi.org/10.1093/imanum/drz020}.

\bibitem{ChenHouWang2021}
{\sc Y.~Chen, T.~Y. Hou, and Y.~Wang}, {\em {Exponential Convergence for
  Multiscale Linear Elliptic PDEs via Adaptive Edge Basis Functions}},
  Multiscale Modeling {\&} Simulation, 19 (2021), pp.~980--1010,
  \url{https://doi.org/10.1137/20M1352922}.

\bibitem{ChenHouWang2023}
{\sc Y.~Chen, T.~Y. Hou, and Y.~Wang}, {\em Exponentially convergent multiscale
  methods for 2d high frequency heterogeneous {H}elmholtz equations}, SIAM J.
  Multiscale Modeling \& Simulation, 21 (2023).

\bibitem{conen_coarse_2014}
{\sc L.~Conen, V.~Dolean, R.~Krause, and F.~Nataf}, {\em A coarse space for
  heterogeneous {Helmholtz} problems based on the {Dirichlet}-to-{Neumann}
  operator}, Journal of Computational and Applied Mathematics, 271 (2014),
  pp.~83--99, \url{https://doi.org/10.1016/j.cam.2014.03.031}.

\bibitem{Courant_Hilbert:1953}
{\sc R.~Courant and D.~Hilbert}, {\em Methods of mathematical physics. {V}ol.
  {I}}, Interscience Publishers, Inc., New York, N.Y., 1953.

\bibitem{CraigBampton1968}
{\sc R.~R. CRAIG and M.~C.~C. BAMPTON}, {\em {Coupling of substructures for
  dynamic analyses.}}, AIAA Journal, 6 (1968), pp.~1313--1319,
  \url{https://doi.org/10.2514/3.4741}.

\bibitem{deraemaeker_dispersion_1999}
{\sc A.~Deraemaeker, I.~Babu{\v s}ka, and P.~Bouillard}, {\em Dispersion and
  pollution of the {FEM} solution for the {Helmholtz} equation in one, two and
  three dimensions}, International Journal for Numerical Methods in
  Engineering, 46 (1999), pp.~471--499,
  \url{https://doi.org/10.1002/(SICI)1097-0207(19991010)46:4<471::AID-NME684>3.0.CO;2-6}.

\bibitem{e_heterogeneous_2003}
{\sc W.~E and B.~Engquist}, {\em The heterogeneous multiscale methods},
  Communications in Mathematical Sciences, 1 (2003), pp.~87--132.

\bibitem{Efendiev2013}
{\sc Y.~Efendiev, J.~Galvis, and T.~Y. Hou}, {\em {Generalized multiscale
  finite element methods (GMsFEM)}}, Journal of Computational Physics, 251
  (2013), pp.~116--135, \url{https://doi.org/10.1016/j.jcp.2013.04.045}.

\bibitem{EfendievHou2009}
{\sc Y.~Efendiev and T.~Y. Hou}, {\em {Multiscale Finite Element Methods}},
  Springer New York, New York, NY, 2009,
  \url{https://doi.org/10.1007/978-0-387-09496-0}.

\bibitem{Erlangga_2006}
{\sc Y.~A. Erlangga, C.~W. Oosterlee, and C.~Vuik}, {\em A novel multigrid
  based preconditioner for heterogeneous helmholtz problems}, {SIAM} Journal on
  Scientific Computing, 27 (2006), pp.~1471--1492,
  \url{https://doi.org/10.1137/040615195}.

\bibitem{Freese2021}
{\sc P.~Freese, M.~Hauck, and D.~Peterseim}, {\em {Super-localized Orthogonal
  Decomposition for high-frequency Helmholtz problems}}, arxive: 2112.11368,
  (2021).

\bibitem{Gander2015}
{\sc M.~J. Gander, I.~G. Graham, and E.~A. Spence}, {\em Applying gmres to the
  helmholtz equation with shifted laplacian preconditioning: what is the
  largest shift for which wavenumber-independent convergence is guaranteed?},
  Numerische Mathematik, 131 (2015), pp.~567--614,
  \url{https://doi.org/10.1007/s00211-015-0700-2},
  \url{https://link.springer.com/content/pdf/10.1007/s00211-015-0700-2.pdf}.

\bibitem{gander_analysis_2015}
{\sc M.~J. Gander, A.~Loneland, and T.~Rahman}, {\em Analysis of a {New}
  {Harmonically} {Enriched} {Multiscale} {Coarse} {Space} for {Domain}
  {Decomposition} {Methods}}, Dec. 2015,
  \url{https://doi.org/10.48550/arXiv.1512.05285}.
\newblock arXiv:1512.05285 [math].

\bibitem{GT2001}
{\sc D.~Gilbarg and N.~S. Trudinger}, {\em Elliptic partial differential
  equations of second order}, Classics in Mathematics, Springer-Verlag, Berlin,
  2001,
  \url{https://mathscinet-ams-org.ezproxy2.utwente.nl/mathscinet-getitem?mr=1814364}.
\newblock Reprint of the 1998 edition.

\bibitem{gong_domain_2020}
{\sc S.~Gong, I.~G. Graham, and E.~A. Spence}, {\em Domain decomposition
  preconditioners for high-order discretisations of the heterogeneous
  {Helmholtz} equation}, Oct. 2020,
  \url{https://doi.org/10.48550/arXiv.2004.03996}.
\newblock arXiv:2004.03996 [cs, math].

\bibitem{GrahamSauter2019}
{\sc I.~G. Graham and S.~A. Sauter}, {\em {Stability and finite element error
  analysis for the Helmholtz equation with variable coefficients}}, Mathematics
  of Computation, 89 (2019), pp.~105--138,
  \url{https://doi.org/10.1090/mcom/3457}.

\bibitem{Grisvard2011}
{\sc P.~Grisvard}, {\em {Elliptic Problems in Nonsmooth Domains}}, Society for
  Industrial and Applied Mathematics, 1 2011,
  \url{https://doi.org/10.1137/1.9781611972030}.

\bibitem{Groeger89}
{\sc K.~Gr\"oger}, {\em A {$W^{1,p}$}-estimate for solutions to mixed boundary
  value problems for second order elliptic differential equations}, Math. Ann.,
  283 (1989), pp.~679--687.

\bibitem{Heinlein2015}
{\sc A.~Heinlein, U.~Hetmaniuk, A.~Klawonn, and O.~Rheinbach}, {\em {The
  approximate component mode synthesis special finite element method in two
  dimensions: Parallel implementation and numerical results}}, Journal of
  Computational and Applied Mathematics, 289 (2015), pp.~116--133,
  \url{https://doi.org/10.1016/j.cam.2015.02.053}.

\bibitem{heinlein_multiscale_2018}
{\sc A.~Heinlein, A.~Klawonn, J.~Knepper, and O.~Rheinbach}, {\em Multiscale
  coarse spaces for overlapping {Schwarz} methods based on the {ACMS} space in
  {2D}}, Electronic Transactions on Numerical Analysis, 48 (2018),
  pp.~156--182, \url{https://doi.org/10.1553/etna_vol48s156}.

\bibitem{heinlein_adaptive_2019}
{\sc A.~Heinlein, A.~Klawonn, J.~Knepper, and O.~Rheinbach}, {\em Adaptive
  {GDSW} coarse spaces for overlapping {Schwarz} methods in three dimensions},
  SIAM Journal on Scientific Computing, 41 (2019), pp.~A3045--A3072,
  \url{https://doi.org/10.1137/18M1220613}.

\bibitem{heinlein_fully_2022}
{\sc A.~Heinlein and K.~Smetana}, {\em A fully algebraic and robust two-level
  {Schwarz} method based on optimal local approximation spaces}, July 2022,
  \url{https://doi.org/10.48550/arXiv.2207.05559}.
\newblock arXiv:2207.05559 [cs, math].

\bibitem{henning2014localized}
{\sc P.~Henning and A.~M{\aa}lqvist}, {\em Localized orthogonal decomposition
  techniques for boundary value problems}, SIAM Journal on Scientific
  Computing, 36 (2014), pp.~A1609--A1634.

\bibitem{Hetmaniuk2014}
{\sc U.~Hetmaniuk and A.~Klawonn}, {\em {Error estimates for a two-dimensional
  special finite element method based on component mode synthesis}}, Electron.
  Trans. Numer. Anal, 41 (2014), pp.~109--132.

\bibitem{Hetmaniuk2010}
{\sc U.~L. Hetmaniuk and R.~B. Lehoucq}, {\em {A special finite element method
  based on component mode synthesis}}, ESAIM: Mathematical Modelling and
  Numerical Analysis, 44 (2010), pp.~401--420,
  \url{https://doi.org/10.1051/m2an/2010007}.

\bibitem{HouWu1997}
{\sc T.~Y. Hou and X.-H. Wu}, {\em {A Multiscale Finite Element Method for
  Elliptic Problems in Composite Materials and Porous Media}}, Journal of
  Computational Physics, 134 (1997), pp.~169--189,
  \url{https://doi.org/10.1006/jcph.1997.5682}.

\bibitem{Hurty1960}
{\sc W.~C. Hurty}, {\em {Vibrations of Structural Systems by Component Mode
  Synthesis}}, Journal of the Engineering Mechanics Division, 86 (1960),
  pp.~51--69, \url{https://doi.org/10.1061/JMCEA3.0000162}.

\bibitem{Joannopoulos2008MoldingLight}
{\sc J.~D. Joannopoulos, S.~G. Johnson, J.~N. Winn, and R.~D. Meade}, {\em
  {Molding the flow of light}}, Princeton Univ. Press, Princeton, NJ [ua],
  (2008).

\bibitem{LafontaineSpence2022}
{\sc D.~Lafontaine, E.~A. Spence, and J.~Wunsch}, {\em {Wavenumber-explicit
  convergence of the hp-FEM for the full-space heterogeneous Helmholtz equation
  with smooth coefficients}}, Computers {\&} Mathematics with Applications, 113
  (2022), pp.~59--69, \url{https://doi.org/10.1016/J.CAMWA.2022.03.007}.

\bibitem{LionsMagenes1972}
{\sc J.~L. Lions and E.~Magenes}, {\em {Non-Homogeneous Boundary Value Problems
  and Applications}}, Springer Berlin Heidelberg, Berlin, Heidelberg, 1972,
  \url{https://doi.org/10.1007/978-3-642-65161-8}.

\bibitem{Liu2022}
{\sc X.~Liu, J.~Xia, M.~V.~de Hoop, and X.~Ou}, {\em Interconnected
  hierarchical structures for fast direct elliptic solution}, Journal of
  Scientific Computing, 91 (2022), p.~15,
  \url{https://doi.org/10.1007/s10915-022-01761-7}.

\bibitem{MaAlberScheichl2023}
{\sc C.~Ma, C.~Alber, and R.~Scheichl}, {\em Wavenumber explicit convergence of
  a multiscale generalized finite element method for heterogeneous {H}elmholtz
  problems}, SIAM J. Numer. Anal., 61 (2023), pp.~1546--1584.

\bibitem{MadureiraSarkis2018}
{\sc A.~Madureira and M.~Sarkis}, {\em Adaptive deluxe {BDDC} mixed and hybrid
  primal discretizations}, in Domain decomposition methods in science and
  engineering {XXIV}, vol.~125 of Lect. Notes Comput. Sci. Eng., Springer,
  Cham, 2018, pp.~465--473, \url{https://doi.org/10.1007/978-3-319-93873-8\_4}.

\bibitem{malqvist_localization_2014}
{\sc A.~M{\aa}lqvist and D.~Peterseim}, {\em Localization of elliptic
  multiscale problems}, Mathematics of Computation, 83 (2014), pp.~2583--2603,
  \url{https://doi.org/10.1090/S0025-5718-2014-02868-8}.

\bibitem{MelenkSauter2010}
{\sc J.~Melenk and S.~Sauter}, {\em {Convergence analysis for finite element
  discretizations of the Helmholtz equation with Dirichlet-to-Neumann boundary
  conditions}}, Mathematics of Computation, 79 (2010), pp.~1871--1914,
  \url{https://doi.org/10.1090/S0025-5718-10-02362-8}.

\bibitem{MelenkSauter2011}
{\sc J.~M. Melenk and S.~Sauter}, {\em {Wavenumber Explicit Convergence
  Analysis for Galerkin Discretizations of the Helmholtz Equation}}, SIAM
  Journal on Numerical Analysis, 49 (2011), pp.~1210--1243,
  \url{https://doi.org/10.1137/090776202}.

\bibitem{OhlbergVerfurth2018}
{\sc M.~Ohlberger and B.~Verfurth}, {\em {A New Heterogeneous Multiscale Method
  for the Helmholtz Equation with High Contrast}}, Multiscale Modeling {\&}
  Simulation, 16 (2018), pp.~385--411,
  \url{https://doi.org/10.1137/16M1108820}.

\bibitem{Peterseim2017}
{\sc D.~Peterseim}, {\em {Eliminating the pollution effect in Helmholtz
  problems by local subscale correction}}, Mathematics of Computation, 86
  (2017), pp.~1005--1036, \url{https://doi.org/10.1090/mcom/3156}.

\bibitem{PeterseimVerfuerth2020}
{\sc D.~Peterseim and B.~Verf\"{u}rth}, {\em Computational high frequency
  scattering from high-contrast heterogeneous media}, Math. Comp., 89 (2020),
  pp.~2649--2674, \url{https://doi.org/10.1090/mcom/3529}.

\bibitem{Shamir1968}
{\sc E.~Shamir}, {\em Regularization of mixed second-order elliptic problems},
  Israel Journal of Mathematics, 6 (1968), pp.~150--168,
  \url{https://doi.org/10.1007/BF02760180},
  \url{https://link.springer.com/content/pdf/10.1007/BF02760180.pdf}.

\bibitem{Wang2011}
{\sc S.~Wang, M.~V. de~Hoop, and J.~Xia}, {\em On 3d modeling of seismic wave
  propagation via a structured parallel multifrontal direct helmholtz solver},
  Geophysical Prospecting, 59 (2011), pp.~857--873,
  \url{https://doi.org/10.1111/j.1365-2478.2011.00982.x}.

\end{thebibliography}

\end{document}